\documentclass[graybox]{svmult}

\usepackage{graphicx}
\usepackage{mathptmx}       
\usepackage{helvet}         
\usepackage{courier}        
\usepackage{type1cm}        
\usepackage{amsmath}
\usepackage{amssymb}
\usepackage[all,2cell]{xy}\UseAllTwocells\SilentMatrices
\usepackage{multirow}

\def\shf{\mathcal}
\def\dshf{\mathfrak}
\def\col{\mathcal}

\def\cl{\textrm{cl }}

\def\pr{\textrm{pr}}
\def\id{\textrm{id}}

\makeindex

\begin{document}

\title*{Sheaf and duality methods for analyzing multi-model systems}
\author{Michael Robinson}
\institute{Michael Robinson \at American University, 4400 Massachusetts Ave NW, Washington, DC 20016, \email{michaelr@american.edu}}
%
%
\maketitle

\abstract*{There is an interplay between models, specified by variables and equations, and their connections to one another.   This dichotomy should be reflected in the abstract as well.  Without referring to the models directly -- only that a model consists of spaces and maps between them -- the most readily apparent feature of a multi-model system is its topology.  We propose that this topology should be modeled first, and then the spaces and maps of the individual models be specified in accordance with the topology.  Axiomatically, this construction leads to \emph{sheaves}.  Sheaf theory provides a toolbox for constructing predictive models described by systems of equations.  Sheaves are mathematical objects that manage the combination of bits of local information into a consistent whole.  The power of this approach is that complex models can be assembled from smaller, easier-to-construct models.  The models discussed in this chapter span the study of continuous dynamical systems, partial differential equations, probabilistic graphical models, and discrete approximations of these models.}

\section{Introduction}

Complex predictive models are notoriously hard to construct and to study.  Sheaf theory provides a toolbox for constructing predictive models described by systems of equations.  Sheaves are mathematical objects that combine bits of local information into a consistent whole.  The power of this approach is that complex models can be assembled from smaller, easier-to-construct models.  The models discussed in this chapter span the study of continuous dynamical systems, partial differential equations, probabilistic graphical models, and discrete approximations of these models.  Shadows of the sheaf theoretic perspective are apparent in a variety of disciplines, for instance in the construction of volume meshers (which construct pullbacks and pushforwards of sheaves of functions), finite element solvers (which construct the space of global sections of a sheaf), and loopy belief propagation (which iteratively determines individual global sections).  

Multi-model systems can be constructed from diagrams of individual models.  It is therefore helpful to abstract this idea into a convenient formalism, in which the basic features are captured.  There is an interplay between the models themselves and their connections to one another.  This dichotomy should be reflected in the abstract as well.  Without referring to the models directly -- only that a model consists of spaces and maps between them -- the most readily apparent feature of a multi-model system is its topology.  We propose that this topology should be modeled first, and then the spaces and maps of the individual models be specified in accordance with the topology.  Axiomatically, this construction leads to \emph{sheaves}.  Our overall modeling approach consists of specifying sheaves \emph{of} a certain type \emph{on} a certain kind of topological space.  We could aim for complete generality, though this leads to considerable computational and analytic difficulties.  We will build our theory on \emph{partial orders} (or \emph{posets}, for short); these lend themselves to a balance between theoretical expressiveness and computational facility.  We will specify a multi-model system as a sheaf of smooth, finite-dimensional manifolds on a poset.  A sheaf model is a natural way to express the topological relationship among state variables and the equations relating them.  Solver algorithms for the set of equations enforce consistency constraints among the state variables, which is precisely the process of computing the space of global sections of a sheaf.

Encoding models as sheaves allows one to realize two rather different capabilities:
\begin{enumerate}
\item Combining vastly different dynamical models into a multi-model system in a systematic way, and
\item Analyzing homological invariants to study locally- and globally-consistent states of the system.
\end{enumerate}

Both of these leverage the topological structure already inherent in and among the models.  For instance:
\begin{enumerate}
\item The base topological space is often essentially a material volume, envisioned abstractly.  It can be partitioned into a cellular space, such as finite element meshers already do.  All modern solid modelers store an explicit, topological model of the model volume \cite[Chap. 2]{Hoffmann_1989}.  In building sheaves, the topology can be refined (cells subdivided, for instance) in order to construct discretizations.  Some solid modeling/meshing APIs can do this natively \cite[Sec. 3]{Li_2005}.
\item The local data represented in a sheaf are the state variables in the interior of each mesher cell, exactly as the finite element solver represents them.
\item The equations are encapsulated in (not necessarily linear) maps deriving boundary values from the parameters known about the interiors of each cell.
\end{enumerate}

\section{Modeling systems with diagrams}
\label{sec:diagram}

Dynamical models usually involve a collection of state variables and equations that determine their admissible values.  For instance, the famous Lorenz system specifies the values of three variables -- each of which is a function of time -- constrained by three equations
\begin{equation}
  \label{eq:lorenz}
  \begin{cases}
    \frac{dx}{dt} = a(y-x),\\
    \frac{dy}{dt} = x(b-z) - y,\\
    \frac{dz}{dt} = xy - cz,\\
  \end{cases}
\end{equation}
where $a,b,c$ are constants.  The values of $x$ and $y$ determine the future values of $x$, but all three are implicated in determining $y$ and $z$.  The solutions exhibit intricate behavior because the values of $z$ constrain the values of $x$ even though there is not a direct causal relationship.

\begin{figure}
  \begin{center}
    \includegraphics[width=3in]{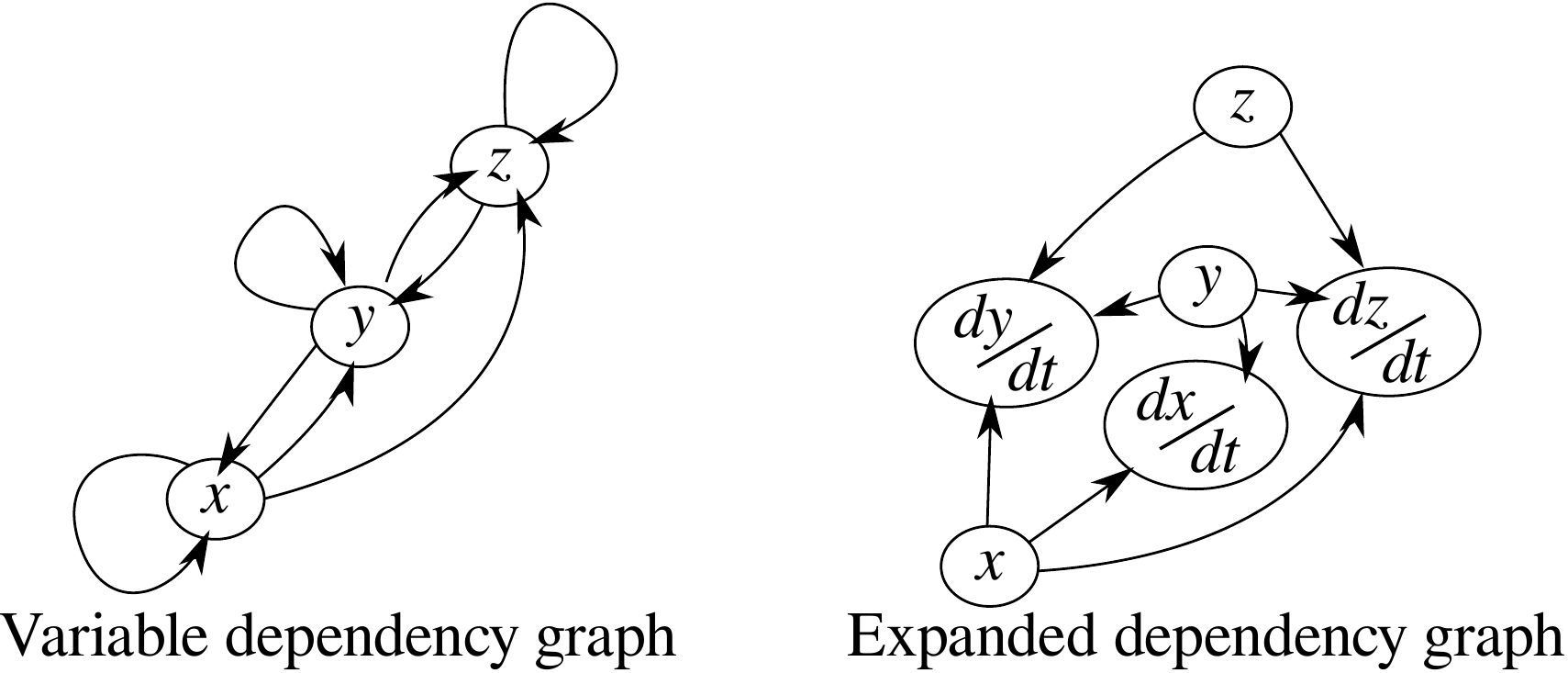}
    \caption{Dependency relations among state variables in the Lorenz system (left), and among variables and their derivatives (right)}
    \label{fig:lorenz1}
  \end{center}
\end{figure}

One way to gain an understanding of the behavior of solutions to \eqref{eq:lorenz} is to build a visual representation of the causal relationships between the state variables.  The left frame of Figure \ref{fig:lorenz1} shows one such representation, where an arrow from one variable $x \to y$ to another indicates that $x$ partially determines future values of $y$.  

This representation isn't entirely true to the way the equations in \eqref{eq:lorenz} are written, because the equations also involve derivatives of the state variables.  If we include derivatives of state variables as new state variables in their own right, then we obtain a rather larger diagram, such as the one in the right frame of Figure \ref{fig:lorenz2}.  This new diagram is a bit more instructive, in that it is the derivatives that are determined by the values of the state variables.  However, it still leaves unstated the relationship between the derivative of a state variable and the state variable itself.  For instance, the derivative $dx/dt$ is determined independently both by the values of $x$ (alone) and by the values of $x$ and $y$ through the first equation of \eqref{eq:lorenz}.  It would be useful to encode all of this information into the diagram.

The way to perform this encoding is to reinterpret the meaning of the arrows in the dependency graphs.  Instead of an arrow indicating that the variable on the head is determined in part by the variable on the tail, it is better to demand that arrows be actual functional relations.  This stronger requirement is not satisfied by either of the diagrams in Figure \ref{fig:lorenz1}.  The problem is that in \eqref{eq:lorenz}, the formula for $dx/dt$ depends \emph{jointly} on the variables $x$ and $y$.  Therefore, the functional dependence between $x$, $y$, and $dx/dt$ needs to be from \emph{pairs} of values $(x,y)$.  When we perform this transformation to the dependency diagram, we obtain the diagrams in Figure \ref{fig:lorenz2}.

\begin{figure}
  \begin{center}
    \includegraphics[width=3in]{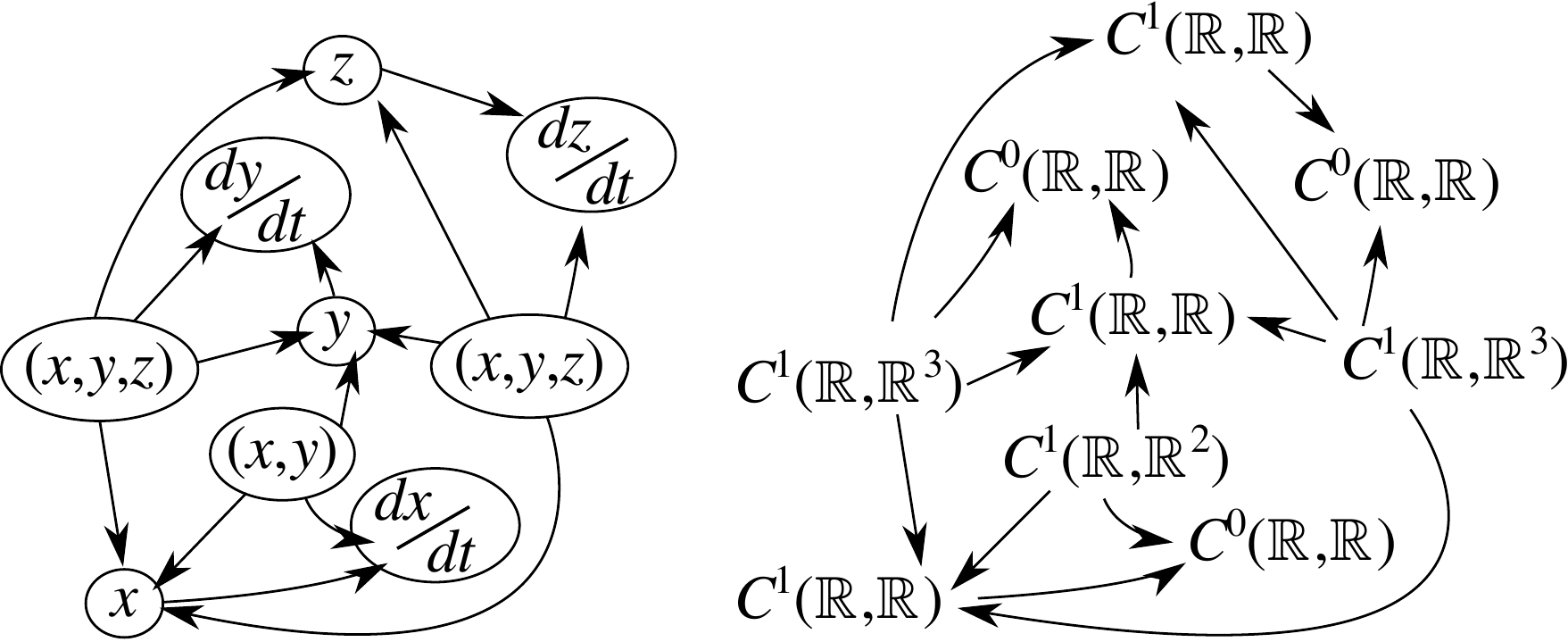}
    \caption{Functional dependencies among state variables and their derivatives in the Lorenz system, according to variable names (left) and according to the spaces of values involved (right) }
    \label{fig:lorenz2}
  \end{center}
\end{figure}

There are a number of pleasing features about a functional dependency diagram like the ones in Figure \ref{fig:lorenz2}.  The most obvious -- and most trivial -- is that the arrows (on the right frame) are actual functions, and could be labeled as such.  The arrows out of the spaces corresponding to tuples of variables are projections, while the others are determined by one of the equations \eqref{eq:lorenz} and by the definition of the derivative.  Everything about \eqref{eq:lorenz} is captured in the diagram on the right, in that the equations can be recovered from the diagram.  The in-degree of a variable in Figure \ref{fig:lorenz2} specifies the number of functional equations that constrain its value.  This means that the independent variables listed in Figure \ref{fig:lorenz2} are those with no arrows pointing into them.  It is easy to see that these are the pairs $(x,y)$ and the triples $(x,y,z)$.  This does \emph{not} mean that there are no constraints on these independent variables, just that there are no functional dependencies from the outset.  Constraints on these independent variables arise by demanding that each listed variable in the diagram take exactly one value.  Then if a variable is determined by two functional equations, the independent variables in those two equations must be chosen compatibly.  Notice that there are some values that are completely dependent ($dx/dt$, $dy/dt$, and $dz/dt$), in that they have no arrows going out of them, while there are also intermediate variables ($x$, $y$, and $z$), that have arrows going in and out.  

The diagram in the left frame of Figure \ref{fig:lorenz2} is that of a partially ordered set.  The partial order ranks the variables appearing in \eqref{eq:lorenz} according to their  ``independence'' of one another.  The independent variables are the minimal elements of the partial order, while the completely dependent ones are the maximal elements.  Therefore, the arrows in the diagram in the Figure point from lower variables to higher ones in the partial order.

The diagram on the right frame of Figure \ref{fig:lorenz2} has the same structure as the partial order, but is labeled a bit differently.  This kind of diagram is that of a \emph{sheaf}, which is a mathematical way to represent local consistency relationships.  Although the study of sheaves over general topological spaces can be quite technical, sheaves over partially ordered sets are much more tractable.  Sheaves have a number of useful invariants that provide descriptive power for systems of equations, and often the mere act of encoding a system as a sheaf is illuminating.  For instance, writing differential equations along a stratified manifold requires delicate management of boundary conditions of various sorts.  The sheaf encoding described in this chapter makes specifying the correct kind of boundary conditions almost effortless.  This chapter discusses a number of techniques for performing sheaf encodings of systems, explains some of the relationships among these encodings, and describes some of the analytical techniques that can be used on sheaf-encoded models.

\section{Mathematical constructions of sheaves}

Since topological spaces in their full generality tend to admit rather pathological properties that are not reflected in practical models, it is wise to apply constraints.  There are several other possibilities for the topological space, and they vary in expressiveness.  In the author's experience, locally finite topological spaces, cell complexes, abstract simplicial complexes, and partial orders are the most useful for problems involving models of systems.  Of these, locally finite topological spaces are the most general, but nearly every useful computational example can be expressed more compactly with partial orders.  For instance, graphs and abstract simplicial complexes are both special cases of \emph{hypergraphs}, which are merely subsets of the power set of some set $V$.  A hypergraph $H$ in which each element is a finite subset of $V$ can be thought of as locally finite partial order with the partial order coming from inclusion: for $A,B \in H$, $A \le B$ precisely when $A \subseteq B$.

We first build a rather specialized description of sheaves on partial orders, since they are the primary mathematical tool in this chapter.  We then generalize to sheaves on general topological spaces, since they describe the spaces of functions we will discretize in later sections.

\subsection{Sheaves on partial orders}

We will mostly deal with locally finite posets, since these avoid computational difficulties.

\begin{definition}
A \emph{partial order} on a set $P$ is a relation $\le$ on that set that is 
\begin{enumerate}
\item Reflexive: $x \le x$ for all $x \in P$,
\item Antisymmetric: if $x \le y$ and $y \le x$, then $x = y$, and 
\item Transitive: if $x \le y$ and $y \le z$ then $x \le z$.
\end{enumerate}
We call the pair $(P,\le)$ a \emph{partially ordered set} or a \emph{poset}.  When the relation is clear from context, we shall usually write $P=(P,\le)$.  A poset is \emph{locally finite} if for every pair $x,y \in P$, the set $\{z \in P : x \le z \le y\}$ is finite.

Given a partially ordered set $P=(P,\le)$, there is also the \emph{dual partial order} $\le^{op}$ on $P$, for which $x \le^{op} y$ if and only if $y \le x$.  The partially ordered set $P^{op}=(P,\le^{op})$ is called the \emph{dual poset to $P$}.
\end{definition}

\begin{figure}
\begin{center}
\includegraphics[width=4in]{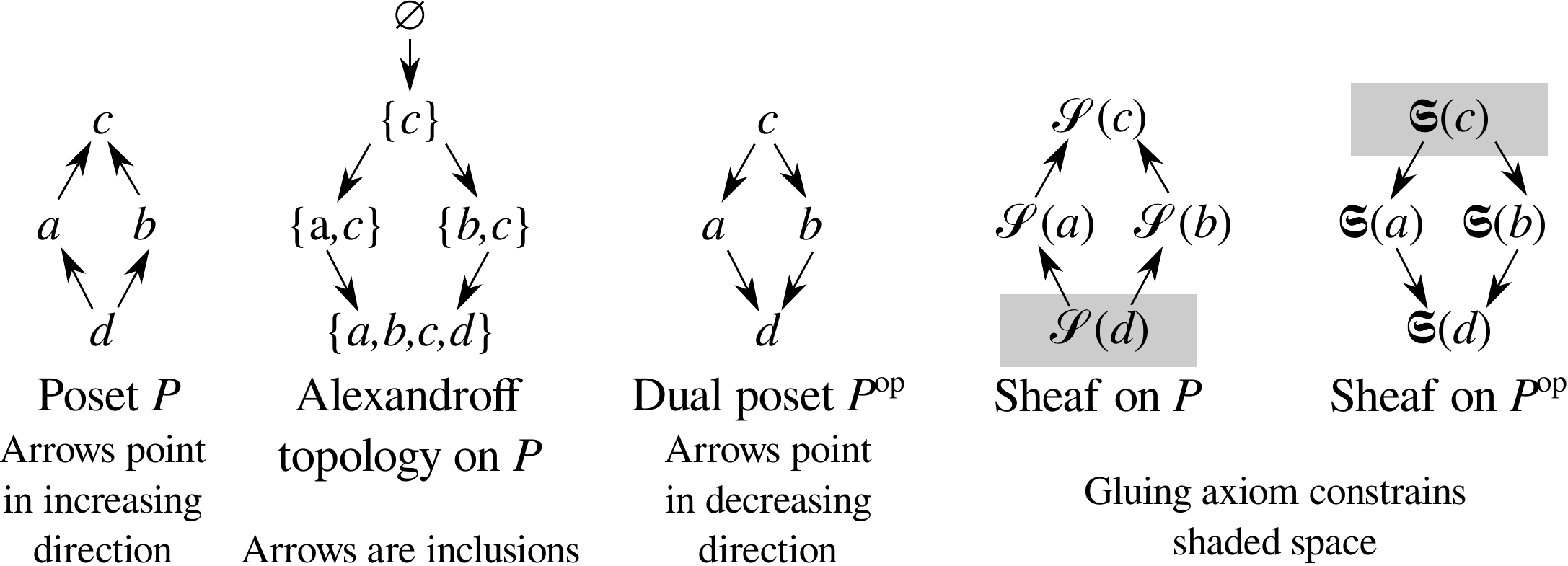}
\caption{A poset $P$, its Alexandroff topology (Definition \ref{df:alexandroff}), the dual poset $P^{op}$, and sheaves over each (Definition \ref{df:sheaf_poset}).  The gluing axiom appears in Definition \ref{df:sheaf}.}
\label{fig:lattice_sheaves}
\end{center}
\end{figure}

Given a subset $A \subseteq P$ of a poset, the \emph{infimum of $A$} is the unique greatest element in $P$ less than or equal to each element of $A$ if such an element exists.  We will write $\bigwedge A$ for the infimum of $A$ if it exists.  Similarly, the \emph{supremum of $A$}, written $\bigvee A$ is the unique least element in $P$ greater than or equal to each element of $A$, if such an element exists.  If $A=\{A_1,A_2\}$, we usually write $\bigwedge A = A_1 \wedge A_2$ and $\bigvee A = A_1 \vee A_2$.   

\begin{example}
Figure \ref{fig:lattice_sheaves} shows a poset $P$ with four elements at left and its dual poset $P^{op}$ at center.  The diagram is to be read that $d \le a \le c$ in $P$.  In $P$, the infimum of $A=\{a,b\}$ is $d$, while the supremum is $c$.
\end{example}

The definition of a \emph{sheaf} is rather crisply stated in terms of the diagram of a poset, where the vertices represent elements and arrows point from lesser elements to greater ones.  Merely replace each vertex by a set or a space and each arrow by a function so that the composition of functions in the diagram is path independent.  If all of the functions' inputs are at the tails of each arrow, then the diagram is that of a \emph{sheaf on the Alexandroff topology} for that poset.  (We generalize to arbitrary topologies in Section \ref{sec:topo_sheaves}.)  If all of the functions' inputs are at the heads of each arrow, then the diagram defines a sheaf over the dual poset.  When discussing a particular poset, we will emphasize this difference by setting sheaves over that poset in script type, and by setting sheaves over the dual poset in fraktur type.

\begin{definition}
\label{df:sheaf_poset}
  Suppose that $P=(P,\le)$ is a poset.  A \emph{sheaf $\shf{S}$ of sets on $P$ with the Alexandroff topology} (briefly, a \emph{sheaf $\shf{S}$ on $P$}) consists of the following specification:
\begin{enumerate}
\item For each $p \in P$, a set $\shf{S}(p)$, called the \emph{stalk at} $p$, 
\item For each pair $p \le q \in P$, there is a function $\shf{S}(p \le q):\shf{S}(p)\to\shf{S}(q)$, called a \emph{restriction function} (or just a \emph{restriction}), such that
\item For each triple $p \le q \le r \in P$, $\shf{S}(p \le r) = \shf{S}(q \le r) \circ \shf{S}(p \le q)$.
\end{enumerate}
When the stalks themselves have structure (they are vector spaces or topological spaces, for instance) one obtains a sheaf \emph{of} that type of object when the restrictions or extensions preserve that structure.  For example, a \emph{sheaf of vector spaces} has linear functions for each restriction, while a \emph{sheaf of topological spaces} has continuous functions for each restriction.

Similarly, a \emph{sheaf $\dshf{C}$ of sets on the dual poset $P^{op}$ with the Alexandroff topology} (briefly, a \emph{dual sheaf $\dshf{C}$ on $P$}) consists of the same kind of thing, just backwards.  Namely,
\begin{enumerate}
\item For each $p \in P$, a set $\dshf{D}(p)$, called the \emph{stalk at} $p$, 
\item For each pair $p \le q \in P$, there is a function $\dshf{D}(p \le q):\dshf{D}(q)\to\dshf{D}(p)$, called an \emph{extension function} (or just an \emph{extension}), such that
\item For each triple $p \le q \le r \in P$, $\dshf{D}(p \le r) = \dshf{D}(p \le q) \circ \dshf{D}(q \le r)$.
\end{enumerate}

If either of the conditions (3) above are not satisfied, we call the construction a \emph{diagram} instead of a sheaf. 
\end{definition}

\begin{example}
  \label{ex:simple_sheaves}
  In Figure \ref{fig:lattice_sheaves}, choosing
  \begin{equation*}
    \shf{S}(a) = \mathbb{R},\;\shf{S}(b) = \mathbb{R},\; \shf{S}(c) = \mathbb{R},\; \shf{S}(d) = \mathbb{R},
  \end{equation*}
  with
  \begin{equation*}
    \left(\shf{S}(d \le a)\right)(x) = 2x,\;\left(\shf{S}(d \le b)\right)(x) = x,\;\left(\shf{S}(a \le c)\right)(x) = x,\;\left(\shf{S}(b \le c)\right)(x) = 2x,
  \end{equation*}
  results in a sheaf.  On the other hand,
  \begin{equation*}
    \left(\shf{S}(d \le a)\right)(x) = x,\;\left(\shf{S}(d \le b)\right)(x) = x,\;\left(\shf{S}(a \le c)\right)(x) = x,\;\left(\shf{S}(b \le c)\right)(x) = -x,
  \end{equation*}
  is merely a diagram, because the composition of the maps on the left ($d \to a \to c$) is the identity map, while the other composition ($d \to b \to c$) is not.
\end{example}

Encoding a multi-model system as a diagram is a useful exercise, since consistencies and inconsistencies between the component models are thereby formalized.  Those elements of the stalks that are mutually consistent across the entire system, formalized as a sheaf, are called \emph{sections}.  Sections are what the combined multi-model system produces as output, and amount to the simultaneous solution of a number of equations (see Section \ref{sec:simultaneous}).

\begin{definition}
\label{df:section}
A \emph{global section} of a sheaf $\shf{S}$ on a poset $P$ is an element $s$ of the direct product\footnote{Which is in general \emph{not} the direct sum, since $P$ may be infinite!} $\prod_{x \in P}\shf{S}(x)$ such that for all $x \le y \in P$ then $\shf{S}(x \le y)\left(s(x)\right) = s(y)$.  A \emph{local section} is defined similarly, but is defined only on a subset $Q \subseteq P$.

Dually, a \emph{global section} of a sheaf $\dshf{C}$ on the dual poset $P^{op}$ is an element $c$ of the direct product such that $\prod_{x \in P}\dshf{C}(x)$ such that for all $x \le y \in P$ then $c(x) = \dshf{C}(x \le y)\left(c(y)\right)$.  A \emph{local section} of such a sheaf is defined only on a subset $Q \subseteq P$.
\end{definition}

\begin{example}
  Continuing with the sheaf $\shf{S}$ from Example \ref{ex:simple_sheaves}, the space of global sections is given by
    $\{(2x,x,2x,x) \in \shf{S}(a) \times \shf{S}(b) \times\shf{S}(c) \times \shf{S}(d) \cong \mathbb{R}^4\},$
  which is itself isomorphic to $\mathbb{R}$.  On the other hand, the space of local sections over $\{a,b\}$ is just $\shf{S}(a) \times \shf{S}(b) \cong \mathbb{R}^2$ since there are no further constraints.
\end{example}

The structure of a particular mathematical object is better understood in context, by looking at structure-preserving transformations between them.  The \emph{morphisms} between sheaves provide this context.  Most authors tend to focus on the class of morphisms between sheaves over the same space, though for our purposes is quite essential to study sheaf morphisms involving different spaces.  

\begin{definition} (\cite{TSPBook} or \cite[Sec. I.4]{Bredon})
  \label{df:morphism}
  Suppose that $\shf{R}$ is a sheaf on a poset $Y$ and that $\shf{S}$ is a sheaf on $X$. 
  A \emph{sheaf morphism} $m: \shf{R}\to \shf{S}$ along an order preserving map $f: X \to Y$ (careful: $m$ and $f$ \emph{go in opposite directions!}) consists of a set of functions $m_x: \shf{R}(f(x)) \to \shf{S}(x)$ for each $x \in X$ such that the following diagram commutes
  \begin{equation*}
    \xymatrix{
      \shf{R}(f(y)) \ar[r]^{m_y} &\shf{S}(y)\\
      \shf{R}(f(x)) \ar[r]_{m_x} \ar[u]^{\shf{R}(f(x) \le f(y))}&\shf{S}(x) \ar[u]_{\shf{S}(x \le y)}\\
      }
  \end{equation*}
  for each $x \le y$.  We usually call the functions $m_x$ the \emph{components} of the sheaf morphism.  A sheaf morphism is said to be \emph{injective} (or \emph{surjective}) if each component is injective (or surjective).
\end{definition}

\begin{proposition}
  \label{prop:morphism}
  A sheaf morphism $m:\shf{R}\to\shf{S}$ along an order preserving $f:X \to Y$ induces a function taking global sections of $\shf{R}$ (a sheaf over $Y$) to global sections of $\shf{S}$ (a sheaf over $X$).
\end{proposition}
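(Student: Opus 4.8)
The plan is to define the induced map explicitly on elements and then verify the defining condition of a global section by a single commutative-diagram chase. Let $r$ be a global section of $\shf{R}$, so $r \in \prod_{y \in Y}\shf{R}(y)$ and $\shf{R}(y \le y')\bigl(r(y)\bigr) = r(y')$ whenever $y \le y'$ in $Y$. I would define a candidate $s \in \prod_{x \in X}\shf{S}(x)$ by $s(x) := m_x\bigl(r(f(x))\bigr)$. This is well posed because $r(f(x)) \in \shf{R}(f(x))$, which is exactly the domain of the component $m_x \colon \shf{R}(f(x)) \to \shf{S}(x)$, so $s(x)$ lands in the stalk $\shf{S}(x)$ as it must.

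The one thing left to check is that $s$ is itself a global section of $\shf{S}$, i.e.\ that $\shf{S}(x \le y)\bigl(s(x)\bigr) = s(y)$ for every $x \le y$ in $X$. The two ingredients are: since $f$ is order preserving, $x \le y$ forces $f(x) \le f(y)$, so the restriction $\shf{R}(f(x) \le f(y))$ exists and the commuting square of Definition \ref{df:morphism} is available for the pair $x \le y$; and, $r$ being a global section, $\shf{R}(f(x) \le f(y))\bigl(r(f(x))\bigr) = r(f(y))$. Chasing the square then gives
\begin{equation*}
  \shf{S}(x \le y)\bigl(s(x)\bigr) = \shf{S}(x \le y)\Bigl(m_x\bigl(r(f(x))\bigr)\Bigr) = m_y\Bigl(\shf{R}(f(x) \le f(y))\bigl(r(f(x))\bigr)\Bigr) = m_y\bigl(r(f(y))\bigr) = s(y),
\end{equation*}
which is the required identity. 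The assignment $r \mapsto s$ is then the induced function on global sections.

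I do not expect a real obstacle: the argument is a routine diagram chase, and the only subtlety — ensuring $f(x) \le f(y)$ so that the relevant restriction map, and hence the morphism square, are defined — is immediate from the order-preservation hypothesis on $f$. If anything, the point to state carefully is the bookkeeping of directions ($m$ and $f$ run opposite to one another, and the induced map on sections goes from sections of $\shf{R}$ over $Y$ to sections of $\shf{S}$ over $X$), so that the composite $m_x \circ (r \circ f)$ is the well-defined object claimed.
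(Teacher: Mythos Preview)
Your proof is correct and is essentially identical to the paper's: both define $s(x) = m_x(r(f(x)))$ and verify the section condition by the same chain of equalities using the commuting square from Definition~\ref{df:morphism} together with the fact that $r$ is a section. The only difference is that you spell out the well-definedness and the direction bookkeeping a bit more explicitly, which is fine.
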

\begin{proof}
  Suppose $r$ is a section of $\shf{R}$. If $x\in X$, then let $s(x) = m_x(r(f(x)))$.  Then, $s$ is a section of $\shf{S}$ because whenever $x \le y \in X$, 
\begin{eqnarray*}
\left(\shf{S}(x \le y)\right)s(x) &=& \left(\shf{S}(x \le y)\right) m_{x}(r(f(x)))\\
&=& \left(\shf{S}(x \le y) \circ m_{x}\right)(r(f(x)))\\
&=& \left(m_y \circ \shf{R}(f(x) \le f(y)) \right)(r(f(x)))\\
&=& m_y (r(f(y)))\\
&=&s(y)
\end{eqnarray*}
by the definition of a sheaf morphism.
\end{proof}

Generalizing a bit further, it is also useful to be able to map the stalks of a dual sheaf into the stalks of a sheaf -- providing a notion of a \emph{hybrid morphism} from dual sheaves into sheaves.  This plays an important role in understanding discretizations.

\begin{definition}
  \label{df:hybridmorphism}
  Suppose that $\dshf{D}$ is a dual sheaf on a poset $Y$ and that $\shf{S}$ is a sheaf on $X$. 
  A \emph{hybrid morphism} $m: \dshf{D}\to \shf{S}$ along an order preserving map $f: X \to Y$ consists of a set of functions $m_x: \dshf{D}(f(x)) \to \shf{S}(x)$ for each $x \in X$ such that the following diagram commutes
  \begin{equation*}
    \xymatrix{
      \dshf{D}(f(y)) \ar[r]^{m_y} \ar[d]_{\dshf{D}(f(x) \le f(y))} &\shf{S}(y)\\
      \dshf{D}(f(x)) \ar[r]_{m_x} &\shf{S}(x) \ar[u]_{\shf{S}(x \le y)}\\
      }
  \end{equation*}
  for each $x \le y$.  We usually call the functions $m_x$ the \emph{components} of the hybrid morphism.  A hybrid morphism is said to be \emph{injective} (or \emph{surjective}) if each component is injective (or surjective).
\end{definition}

Like sheaf morphisms, hybrid morphisms transform local (or global) sections of a dual sheaf to local (or global) sections of a sheaf.

\subsection{Sheaves on topological spaces}
\label{sec:topo_sheaves}

This section explains the appropriate generalization of sheaves on posets with the Alexandroff topology to sheaves over arbitrary topological spaces.  Topological spaces and partial orders are closely related, because every topology defines a unique partial order.

\begin{definition}
A \emph{topology} on a set $X$ consists of a collection $\col{T}$ of subsets of $X$ that satisfy the following four axioms
\begin{enumerate}
\item $\emptyset \in \col{T}$,
\item $X \in \col{T}$,
\item If $U, V \in \col{T}$, then $U \cap V \in \col{T}$, and
\item If $\col{U} \subseteq \col{T}$, then $\cup \col{U} = \{x\in X: x \in U \text{ for some }U\in\col{U}\} \in \col{T}$.
\end{enumerate}
We will call $(X,\col{T})$ a \emph{topological space}.
\end{definition}

\begin{definition}
  \label{df:open}
Every topological space $X=(X,\col{T})$ defines a poset ${\bf Open}(X,\col{T})=(\col{T},\subseteq)$ on the open sets, partially ordered by the subset relation.  When the topology $\col{T}$ is clear from context, we shall usually write ${\bf Open}(X)={\bf Open}(X,\col{T})$.
\end{definition}

The axioms for a topology ensure that in ${\bf Open}(X,\col{T})$, infima of finite sets exist, namely via $U_1 \wedge U_2 = U_1 \cap U_2$, and that suprema of any collection $\col{U}$ of open sets exists via $\bigvee \col{U} = \bigcup \col{U}$.  We note that a poset in which all infima and suprema exist for finite collections is usually called a \emph{lattice}, so every topological space defines a lattice of open sets. 

Although the Definitions above suggest that we can merely focus on posets, avoiding mention of topological spaces, this is only partially true.  If we take a given poset as ${\bf Open}(X,\col{T})$, this alone does not completely define a topological space. 

\begin{example}
\label{ex:same_poset}
Consider the set $X=\{a,b,c\}$ with two topologies, 
\begin{equation*}
\col{T}_1 = \{\{a,b,c\}, \{a,b\},\{c\},\emptyset\}
\end{equation*}
and 
\begin{equation*}
\col{T}_2=\{\{a,b,c\},\{a\},\{c\},\emptyset\}.
\end{equation*}
Both of these topologies have the same poset of open sets, namely 
${\bf Open}(X,\col{T}_1)$ at left below and ${\bf Open}(X,\col{T}_2)$ at right
\begin{equation*}
\xymatrix{
&\emptyset\ar[dl]\ar[dr]& & &\emptyset\ar[dl]\ar[dr]&\\
\{a,b\}\ar[dr]&&\{c\}\ar[dl] & \{a\}\ar[dr] &&\{c\}\ar[dl]\\
&\{a,b,c\}& & &\{a,b,c\}\\
}
\end{equation*}
Yet $(X,\col{T}_1)$ and $(X,\col{T}_2)$ are quite different as topological spaces.  Observe that both $\{a,b\} \vee \{c\} = \{a,b,c\}$ in ${\bf Open}(X,\col{T}_1)$ and $\{a\} \vee \{c\} = \{a,b,c\}$ in ${\bf Open}(X,\col{T}_2)$.  However, only in $\col{T}_1$ is the union of these two elements $\{a,b\} \cup \{c\} = \{a,b,c\}$.
\end{example}

One particularly important topology that can be built from a partial order is the \emph{Alexandroff} topology.

\begin{definition} \cite{Alexandroff_1937}
  \label{df:alexandroff}
In a poset $(P,\le)$, the collection of sets of the form
\begin{equation}
\label{eq:upset}
U_x = \{y \in P : x \le y\}
\end{equation}
for each $x \in P$ forms a base for a topology, called the \emph{Alexandroff} topology, shown in Figure \ref{fig:lattice_sheaves}.
\end{definition}

\begin{proposition}
\label{prop:alexandroffness}
Every intersection of open sets in the Alexandroff topology on a poset $P$ is open.
\end{proposition}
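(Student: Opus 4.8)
The plan is to show that the open sets of the Alexandroff topology are precisely the \emph{up-sets} of $P$ — subsets $V \subseteq P$ with the property that $x \in V$ and $x \le y$ together imply $y \in V$ — and then to observe that an arbitrary intersection of up-sets is again an up-set.

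First I would check that each basic open set $U_x$ is an up-set, which is immediate from transitivity: if $y \in U_x$, so $x \le y$, and $y \le z$, then $x \le z$, hence $z \in U_x$. Since a union of up-sets is visibly an up-set, every open set — being by definition a union of basic open sets $U_x$ — is an up-set. Conversely, I would show that every up-set $V$ is open by writing $V = \bigcup_{x \in V} U_x$: the inclusion $\supseteq$ holds because $U_x \subseteq V$ whenever $x \in V$ (as $V$ is an up-set), and the inclusion $\subseteq$ holds because $x \in U_x$ for every $x$. Thus $V$ is a union of basic opens, so open. This establishes the identification ``open set $=$ up-set.''

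With that in hand, the proposition follows quickly. Given any family $\{V_i\}_{i \in I}$ of open sets, each is an up-set; let $x \in \bigcap_{i \in I} V_i$ and suppose $x \le y$. For each $i$ we have $x \in V_i$, and since $V_i$ is an up-set, $y \in V_i$; therefore $y \in \bigcap_{i \in I} V_i$. Hence $\bigcap_{i \in I} V_i$ is an up-set, so open. The degenerate case $I = \emptyset$ gives the intersection $P$, which is open, so nothing special is needed there.

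There is no substantive obstacle here; the only step requiring a moment's thought is the ``every up-set is open'' direction, i.e. remembering to realize an arbitrary up-set as the union $\bigcup_{x \in V} U_x$ of basic opens. Everything else is a direct unwinding of transitivity and of the definition of the base for the Alexandroff topology.
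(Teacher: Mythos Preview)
Your proof is correct and rests on the same idea as the paper's: an open set containing a point $x$ must contain all of $U_x$, so the intersection is a neighborhood of each of its points. The paper argues this directly without stating the full ``open $=$ up-set'' characterization (in particular it does not use the converse direction you prove), but the essential step is identical.
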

\begin{proof}
Suppose that $\col{U}$ is a collection of open sets in the Alexandroff topology and that $x \in \cap \col{U}$.  This means that $x$ is in every open set of $\col{U}$.  Now each of these open sets contains at least $U_x$, since these are the sets of the base.  Thus $U_x \subseteq \cap \col{U}$, which therefore shows that $\cap \col{U}$ is a neighborhood of each of its points.
\end{proof}

In the Alexandroff topology for a poset, the usual topological notions of closures, interiors, and frontiers\footnote{By \emph{frontier} of a set $A$, we mean $\cl A \cap \cl A^c$.} have straightforward interpretations in terms of the poset itself.  Additionally, because of Proposition \ref{prop:alexandroffness}, the concept of a \emph{star} is also available.

\begin{corollary}
If $A \subseteq X$ is a subset of a topological space, the \emph{star} of $A$ is the smallest open set containing $A$.  In general, stars need not exist, but in the Alexandroff topology for a poset there is a star of every subset.
\end{corollary}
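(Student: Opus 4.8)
\section*{Proof proposal}

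The plan is to exhibit the star of $A$ explicitly and then verify its defining property. The natural candidate is
\begin{equation*}
\st A = \bigcup_{x \in A} U_x,
\end{equation*}
where $U_x = \{y \in P : x \le y\}$ is the basic open set of Definition \ref{df:alexandroff}. First I would check that this set is open: it is a union of base elements, hence open by the definition of the topology generated by a base. Next, it contains $A$, since reflexivity of $\le$ gives $x \in U_x$ for every $x \in A$. (If $A = \emptyset$ the empty union is $\emptyset$, which is indeed the smallest open set containing $\emptyset$.)

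The substance of the argument is minimality: any open set $V$ with $A \subseteq V$ must satisfy $\st A \subseteq V$. For this the key sublemma is that if $x \in V$ and $V$ is open, then $U_x \subseteq V$. To see this, write $V$ as a union of base sets; since $x \in V$, we have $x \in U_z$ for some $z$ occurring in that union, i.e.\ $z \le x$; then for any $y$ with $x \le y$, transitivity gives $z \le y$, so $y \in U_z \subseteq V$. Hence $U_x \subseteq V$. Applying this to each $x \in A$, all of which lie in $V$, yields $\bigcup_{x \in A} U_x \subseteq V$, that is, $\st A \subseteq V$. Thus $\st A$ is contained in every open set containing $A$, so it is the smallest such set.

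An alternative, slicker route avoids the sublemma: let $\st A$ be the intersection of all open sets containing $A$. This collection is nonempty (it contains $P$), so the intersection contains $A$; and by Proposition \ref{prop:alexandroffness} it is open. Being contained in every open superset of $A$ by construction, it is the smallest one. I would probably record the explicit formula as well, since it makes the poset-theoretic content transparent and is convenient in later sections.

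I do not expect a genuine obstacle here. The only point requiring a moment's care is the sublemma ``$x \in V$ open $\Rightarrow U_x \subseteq V$,'' which is precisely why the Alexandroff topology is so well behaved and which also underlies Proposition \ref{prop:alexandroffness}; everything else is a direct appeal to reflexivity and transitivity of $\le$ together with the definition of a base.
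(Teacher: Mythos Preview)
Your proposal is correct. The paper's own proof is precisely your ``alternative, slicker route'': it simply observes that the star of $A$ is the intersection of all open sets containing $A$ and appeals to Proposition~\ref{prop:alexandroffness} to conclude this intersection is open. Your primary approach, constructing $\st A = \bigcup_{x\in A} U_x$ explicitly and verifying minimality via the sublemma, is a different but equally valid route; it has the advantage of yielding the concrete poset-theoretic description of the star, while the paper's argument is shorter because it leverages the preceding proposition directly.
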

\begin{proof}
It suffices to observe that the star over $A$ is the intersection of the collection of all open sets containing $A$.  
\end{proof}

It is very easy to see that order preserving maps $P\to Q$ between two posets are continuous when both $P$ and $Q$ are given the Alexandroff topology.

\begin{example}
Although the Alexandroff topology on $P$ defines a partial order on its open sets, this is both typically larger than $P$ and dual to $P$ in a particular way.  Consider the small example shown at left in Figure \ref{fig:lattice_sheaves}.  The poset $P$ contains four elements, and is a lattice.  The Alexandroff topology consists of five open sets -- it includes the empty set -- each of which happens to be the star over each original element of $P$.  As the Figure shows, the diagram of the Alexandroff topology contains the dual poset, namely $P^{op}$.  
\end{example}

\begin{definition}
  \label{df:presheaf}
Suppose $(X,\col{T})$ is a topology.  A \emph{presheaf $\shf{S}$ of sets on $(X,\col{T})$} consists of the following specification:
\begin{enumerate}
\item For each open set $U \in \col{T}$, a set $\shf{S}(U)$, called the \emph{stalk at} $U$, 
\item For each pair of open sets $U \subseteq V$, there is a function $\shf{S}(U \subseteq V):\shf{S}(V)\to\shf{S}(U)$, called a \emph{restriction function} (or just a \emph{restriction}), such that
\item For each triple $U \subseteq V \subseteq W$ of open sets, $\shf{S}(U \subseteq W) = \shf{S}(U \le V) \circ \shf{S}(V \le W)$.
\end{enumerate}
When the stalks themselves have structure (they are vector spaces or topological spaces, for instance) one obtains a presheaf \emph{of} that type of object when the restrictions or extensions preserve that structure.  For example, a \emph{presheaf of vector spaces} has linear functions for each restriction, while a \emph{presheaf of topological spaces} has continuous functions for each restriction.
\end{definition}

A sheaf $\shf{S}$ on a poset $P$ with the Alexandroff topology given by Definition \ref{df:sheaf_poset} is almost a presheaf on $(P,\col{A})$, where $\col{A}$ is the Alexandroff topology on $P$.  The only issue is that the stalks on unions of stars are not defined yet, but these will be defined in Proposition \ref{prop:poset_sheaves}. 

As Definition \ref{df:presheaf} makes clear, presheaves on a topological space are only sensitive to the poset of open sets, and \emph{not} to the points in those open sets.  Therefore, we can use Definition \ref{df:section} to define \emph{sections} of a presheaf on a topological space.

Because of the situation in Example \ref{ex:same_poset}, the set of global sections of a presheaf on the whole topological space may be quite different from the set of local sections over all open subsets.  It is for this reason that when studying presheaves over topological spaces, an additional \emph{gluing axiom} is included to remove this distinction.

\begin{definition}
\label{df:sheaf}
Let $\shf{P}$ be a presheaf on the topological space $(X,\col{T})$.  We call $\shf{P}$ a \emph{sheaf on $(X,\col{T})$} if for every open set $U \in \col{T}$ and every collection of open sets $\col{U}\subseteq \col{T}$ with $U = \cup \col{U}$, then $\shf{P}(U)$ is isomorphic to the space of sections over the set of elements $\col{U}$.
\end{definition}

\begin{example}
Recall Example \ref{ex:same_poset}, in which two topologies were considered on the set $X=\{a,b,c\}$, and consider the diagram
\begin{equation*}
\xymatrix{
&\{0\}&\\
\mathbb{R}\ar[ur]&&\mathbb{R}\ar[ul]\\
&\mathbb{R}\ar[ul]^{\id}\ar[ur]^{\id}&\\
}
\end{equation*}
where $\id$ is the identity function.  This diagram defines a presheaf for both $(X,\col{T}_1)$ and $(X,\col{T}_2)$, but only a sheaf on $(X,\col{T}_2)$.  Specifically, since $\{\{a,b\},\{c\}\}$ is a cover for $\{a,b,c\}$ in $\col{T}_1$, the stalk on $\{a,b,c\}$ must be the global sections on $\{\{a,b\},\{c\}\}$, which is $\mathbb{R}^2$, yet the stalk there is $\mathbb{R}$.  However, the analogous structure in $\col{T}_2$ is $\{\{a\},\{c\}\}$, which is not a cover for $\{a,b,c\}$, so the gluing axiom does not apply.
\end{example}

\begin{figure}
  \begin{center}
    \includegraphics[width=3.5in]{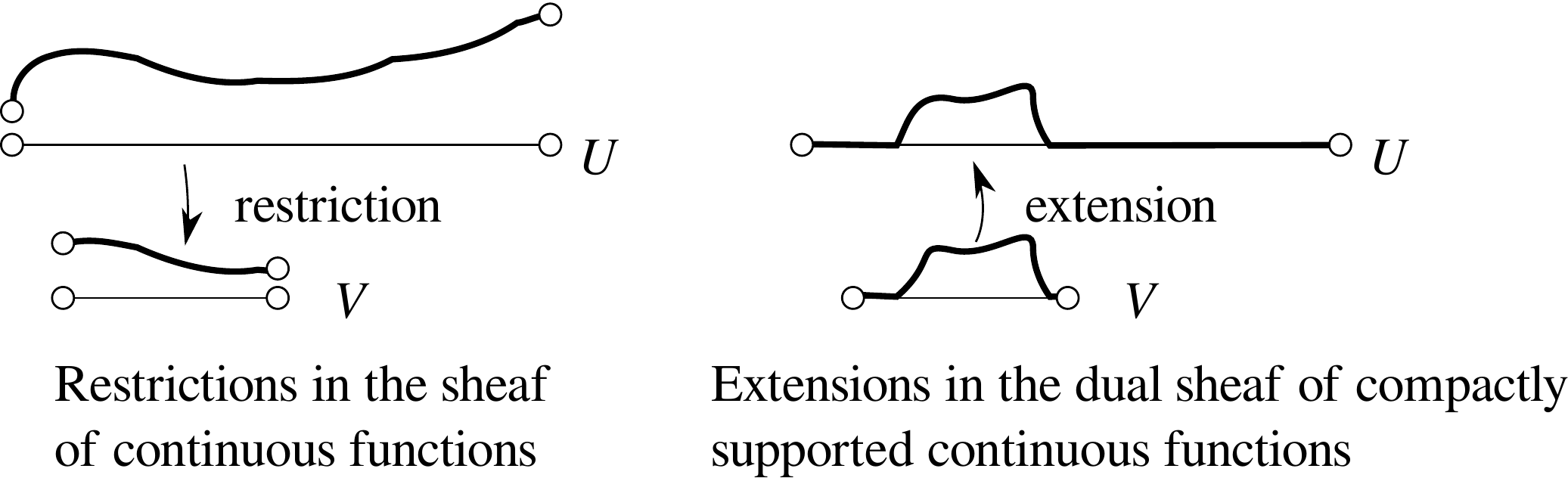}
    \caption{Some stalks in the sheaf of continuous functions $\mathbb{R}\to \mathbb{R}$ (left) and the dual sheaf of compactly supported functions $\mathbb{R}\to\mathbb{R}$ (right)}
    \label{fig:contfunctions}
  \end{center}
\end{figure}

\begin{example}
  \label{eg:contfunctions1}
  Let $(X,\col{T})$ and $(Y,\col{S})$ be topological spaces.  The space $C(X,Y)$ of continuous functions $X\to Y$ has the structure of a sheaf $\shf{C}(X,Y)$ on $(X,\col{T})$.  As the left frame of Figure \ref{fig:contfunctions} shows, the stalk over $U \in \col{T}$ is $C(U,Y)$ and if $U \le V$, then restricting the domain induces a restriction function $C(U,Y)\to C(V,Y)$.  The gluing axiom expresses the well-known fact that whenever two continuous functions with overlapping domains are equal on the overlap, then they extend to a common continuous function over the union.
\end{example}

If $(P,\le)$ is a poset with the Alexandroff topology, the distinction between sheaves and presheaves vanishes.

\begin{proposition}
\label{prop:poset_sheaves}
Let $\shf{R}$ be a sheaf on a poset $(P,\le)$ with nonempty stalks (Definition \ref{df:sheaf_poset}).  There is a sheaf (Definition \ref{df:sheaf}) $\shf{R}'$ on $(P,\col{A})$ where $\col{A}$ is the Alexandroff topology given by
\begin{enumerate}
\item $\shf{R}'(U_x) = \shf{R}(x)$ for each $x\in P$,
\item $\shf{R}'(U_y \subseteq U_x) = \shf{R}(x \le y) : \shf{R}(x) \to \shf{R}(y)$ for each pair of elements $x \le y$ in $P$,
\item $\shf{R}'(\bigcup_{i\in I} U_{x_i})$ is the space of sections of $\shf{R}$ over $\{x_i\}_{i\in I}$ for any collection of elements $\{x_i\}_{i\in I}$ in $P$, and
\item restrictions $\shf{R}'(U_{x_i} \subseteq \bigcup_{i\in I} U_{x_i})$ are given by projection maps.
\end{enumerate}
\end{proposition}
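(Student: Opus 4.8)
The plan is to write down $\shf{R}'$ explicitly and then check that it is a sheaf in the sense of Definition~\ref{df:sheaf}. The fact that makes everything run is that the open sets of the Alexandroff topology on $P$ are exactly the up-sets --- a set $U$ is open iff $x \in U$ and $x \le y$ force $y \in U$ --- so that each open $U$ is the union $\bigcup_{x \in U} U_x$ of the principal up-sets of its points, and $U_y \subseteq U_x$ precisely when $x \le y$. Given this, I would define $\shf{R}'$ on \emph{all} open sets, not just the basic ones: for open (hence up-closed) $U$, let $\shf{R}'(U)$ be the space of local sections of $\shf{R}$ over the subposet $U$ (Definition~\ref{df:section}), and for open $U' \subseteq U$ let $\shf{R}'(U' \subseteq U)$ be restriction of sections, $s \mapsto s|_{U'}$. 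This is legitimate because the equations defining a section over $U'$ form a subset of those defining a section over $U$; it is visibly a presheaf since restriction of functions composes as restriction; and if $\shf{R}$ is a sheaf of vector spaces (resp.\ topological spaces) then restriction is linear (resp.\ continuous) on the relevant products of stalks, so $\shf{R}'$ is again of that type. Some care is needed to arrive at this definition: replacing $\shf{R}'(U)$ by, say, the sections over only the minimal elements of $U$ already fails at the presheaf level, since the value such a section should take at a non-minimal point of $U$ is ambiguous.

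Next I would read off (1)--(4). Because $x$ is the least element of $U_x$, any section $s$ over $U_x$ satisfies $s(z) = \shf{R}(x \le z)(s(x))$ for every $z \ge x$, so evaluation at $x$ is a bijection $\shf{R}'(U_x) \to \shf{R}(x)$ (surjectivity uses the composition law of $\shf{R}$ to build the extension); identifying via this bijection yields (1), and under the identification, using $U_y \subseteq U_x \Leftrightarrow x \le y$, the map $s \mapsto s|_{U_y}$ carries $s(x)$ to $s(y) = \shf{R}(x \le y)(s(x))$, which is (2). For (3), a general open set $W = \bigcup_{x \in W} U_x$ has $\shf{R}'(W)$ equal, by construction, to the space of sections of $\shf{R}$ over $W$; this is precisely (3), provided one reads the ``collection $\{x_i\}_{i \in I}$'' there as an enumeration of the open set $\bigcup_i U_{x_i}$ it generates (for a proper subcollection the space of sections over it is in general not determined by $\bigcup_i U_{x_i}$, so that cannot be the intended reading). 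For (4), under the identification $\shf{R}'(U_{x_i}) = \shf{R}(x_i)$ the restriction of $s \in \shf{R}'(\bigcup_j U_{x_j})$ is $s \mapsto s(x_i)$, i.e.\ the $x_i$-coordinate projection of the product $\prod_z \shf{R}(z)$ inside which the section space sits.

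The real content is the gluing axiom of Definition~\ref{df:sheaf}, and this is where the up-set description of open sets is indispensable. Let $W$ be open and $\col{U}$ a family of open sets with $W = \bigcup \col{U}$. A compatible family over $\col{U}$ --- reading the axiom with compatibility imposed on overlaps, as usual, and noting that each overlap $V \cap V'$ is open by Proposition~\ref{prop:alexandroffness} --- is a tuple $(t_V)_{V \in \col{U}}$ with $t_V \in \shf{R}'(V)$ agreeing on overlaps; since each $t_V$ is an honest partial function on $V \subseteq P$, these glue to a single function $t$ on $\bigcup \col{U} = W$. The only point to verify is that $t$ is a section of $\shf{R}$ over $W$: given $x \le y$ in $W$, choose $V \in \col{U}$ with $x \in V$; since $V$ is an up-set, $y \in V$ as well, whence $\shf{R}(x \le y)(t(x)) = \shf{R}(x \le y)(t_V(x)) = t_V(y) = t(y)$. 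Conversely a section over $W$ restricts to a compatible family, the two assignments are mutually inverse, and both respect whatever algebraic or topological structure is present, which gives the required isomorphism. I expect this step --- propagating local compatibility to an honest global section --- to be the only genuine obstacle, and it goes through precisely because in the Alexandroff topology, once a point lies in an open set, so does the whole principal up-set above it.
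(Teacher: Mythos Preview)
Your argument is correct and in fact more complete than the paper's own. The approaches differ in emphasis. The paper takes the four conditions as defining $\shf{R}'$ and then checks that (1)--(2) are \emph{consistent} with (3)--(4): its key observation is that any open cover of a basic open $U_x$ must contain $U_x$ itself (since $x$ lies in no smaller open set), so the gluing axiom over $U_x$ forces $\shf{R}'(U_x)$ to be the space of sections of $\shf{R}$ over the star of $x$, which is in bijection with $\shf{R}(x)$; the general gluing axiom for arbitrary opens is asserted to follow from (3)--(4) ``by construction'' without an explicit verification. You instead define $\shf{R}'(U)$ uniformly on every open $U$ as the space of sections of $\shf{R}$ over the full up-set $U$, derive (1)--(4) as consequences, and then prove the gluing axiom for an arbitrary open $W$ and cover $\col{U}$ directly, using the fact that each $V\in\col{U}$ is up-closed so that any relation $x\le y$ with $x\in V$ already lives entirely inside $V$. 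Your route buys a self-contained verification of the sheaf condition and cleanly isolates the one structural fact doing the work (open sets are up-sets); the paper's route buys a sharper explanation of why the stalks at basic opens are forced to be $\shf{R}(x)$ (covers of $U_x$ are trivial). Your remark that (3) must be read with $\{x_i\}$ enumerating the entire open set, not merely a generating family, is also well taken.
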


Proposition \ref{prop:poset_sheaves} justifies our terminology ``sheaf on a poset with the Alexandroff topology.''  Throughout this chapter, if $(P,\le)$ is a poset, we will assume it has the Alexandroff topology unless explicitly noted.  We will therefore not distinguish between \emph{presheaves} and \emph{sheaves} on $P$ unless a different topology is explicitly specified.  For dual sheaves, we will \emph{always} use the Alexandroff topology for the dual poset.  Care is needed, if the poset $P$ is ${\bf Open}(X,\col{T})^{op}$ for some topology $\col{T}$, because then the Alexandroff topology on $P$ will generally be different from $\col{T}$!

\begin{proof}
First, observe that $\shf{R}'$ is a presheaf on $(P,\col{A})$ by construction.  Conditions (3) and (4) result in $\shf{R}'$ satisfying the gluing axiom on unions of stars.  The gluing axiom leaves the stalks over maximal elements of $P$ unconstrained, so we only need to investigate the other elements addressed by conditions (1) and (2).  For a given element $x \in P$, suppose that $y_1, \dotsc$ are the elements strictly greater than $x$ in $P$.  Observe that $\bigcup_i U_{y_i}$ cannot cover $U_x$, because in the Alexandroff topology, the only way that $U_{y_1}, \dotsc$ covers $x$ is if one of them contains the star over $x$.  

Thus the gluing axiom requires us to compute the space of sections of $\shf{R}$ over at least the star over $x$, namely $\{x,y_1, \dotsc\}$, which is given by
\begin{equation*}
\{(a,b_1,\dotsc) \in \shf{R}(x) \times \prod_{i} \shf{R}(y_i) : b_i = \left(\shf{R}(x \le y_i)\right)(a)\},
\end{equation*}
which is evidently in bijective correspondence with $\shf{R}(x)$.  Notice that the third axiom in Definition \ref{df:presheaf} ensures that the above construction is well-defined.
\end{proof}

\begin{example}
  \label{eg:contfunctions2}
  The compactly supported continuous functions $C_c(X,\mathbb{R})$ on some topological space $(X,\col{T})$ are best organized in a sheaf $\dshf{C}_c(X,\mathbb{R})$ over the poset ${\bf Open}(X)$, which is a dual sheaf on ${\bf Open}(X)^{op}$.  (Notice the use of the fraktur font, and beware that we are using the Alexandroff topology on ${\bf Open}(X)$ in this example!)  Similar to the situation in Example \ref{eg:contfunctions1}, the stalk over $U$ is $C_c(U,\mathbb{R})$.  But instead of restricting along $U \le V$, one can \emph{extend} by zero, obtaining a function $C_c(V,\mathbb{R}) \to C_c(U,\mathbb{R})$ as the right frame of Figure \ref{fig:contfunctions} shows.

  $\dshf{C}_c(X,\mathbb{R})$ has only one global section: the zero function.  Observe that the infimum of all of ${\bf Open}(X)$ is the empty set.  Thus, the gluing axiom implies that the stalk over the empty set should be the trivial vector space.  For finitely many open sets $\{U_1, U_2, \dotsc, U_n\}$, the local sections are given by $C_c(U_1 \cap \dotsb U_n,\mathbb{R})$, which the gluing axiom asserts is the stalk over $U_1 \cap \dotsb U_n$.  

The situation is quite different for infinite collections of open sets, since they might not have an open intersection.  For instance, the intersection of the set of shrinking intervals $\{(0,1/n)\}_{n=1}^\infty$ is the (non-open) singleton $\{0\}$, but in ${\bf Open}(X)$ the infimum of the set $\{(0,1/n)\}_{n=1}^\infty$ is the empty set.  The only compactly supported continuous function on this is the zero function, which is also the stalk over the empty set.  On the other hand, the infimum of the set $\{(0,1+1/n)\}_{n=1}^\infty$ does not exist in ${\bf Open}(X)$, so the gluing axiom is mute about sections over this collection.
\end{example}

To motivate Definition \ref{df:morphism} of a sheaf morphism, consider a continuous function $F:(X,\col{T}_X) \to (Y,\col{T}_Y)$ from one topological space to another.  Suppose that $\shf{P}$ is a sheaf on $(X,\col{T}_X)$ and $\shf{Q}$ is a sheaf on $(Y,\col{T}_Y)$.  A \emph{sheaf morphism $m:\shf{P} \to \shf{Q}$ along $F$} consists of a set of maps $m_U$, one for each $U\in\col{T}_Y$, such that 
\begin{equation*}
    \xymatrix{
      \shf{P}(F^{-1}(U)) \ar[r]^{m_U} &\shf{Q}(U)\\
      \shf{P}(F^{-1}(V)) \ar[r]_{m_V} \ar[u]^{\shf{P}(F^{-1}(U) \subseteq F^{-1}(V))}&\shf{Q}(V) \ar[u]_{\shf{Q}(U \subseteq V)}\\
      }
\end{equation*}
for each pair of open sets $U \subseteq V$ in $\col{T}_Y$.

\begin{remark}
Sheaf morphisms are closely related to the concept of a \emph{pullback sheaf} along an order preserving map (Definition \ref{df:pullback}) as we will see later in the chapter.
\end{remark}

In order to focus on sheaves over posets, observe that $F$ induces an order-preserving map $f: {\bf Open}(Y,\col{T}_Y) \to {\bf Open}(X,\col{T}_X)$ given by
\begin{equation*}
f(U) = F^{-1}(U)
\end{equation*}
for each $U \in \col{T}_Y$.  Notice that $f$ and $F$ go in opposite directions!

\section{Discretization of functions}
\label{sec:discretization}

The best place to start any discussion about numerical analysis is with discretization.  There are two main ways to do this: by discretizing the domain or by discretizing the space of functions.  Given two topological spaces $X$ and $Y$, consider the space $C(X,Y)$ of continuous functions $X\to Y$.  As Example \ref{eg:contfunctions1} showed, this space can also be thought of as a sheaf $\shf{C}(X,Y)$ over the topological space $X$.  Following Definition \ref{df:open}, we define ${\bf Open}(X)^{op}$ as the poset whose elements are the open sets of $X$, and whose order relation is given by subsets: $U \le V$ if $V \subseteq U$.  

The two discretizations of functions in $C(X,Y)$ involve replacing the poset ${\bf Open}(X)^{op}$ with a new \emph{locally finite} poset $P$ and translating the sheaf $\shf{C}(X,Y)$ on $X$ into a new sheaf $\shf{D}$ or dual sheaf $\dshf{D}$ on $P$ with the Alexandroff topology.  There are two basic ways to do this, namely
\begin{enumerate}
\item By \emph{sampling} via a surjective sheaf morphism $\shf{C}(X,Y) \to \shf{D}$ or
\item By \emph{collapsing} via an injective hybrid morphism $\dshf{D} \to \shf{C}(X,Y)$.
\end{enumerate}
As is described in later sections of this chapter, sampling corresponds to finite difference methods, while collapsing corresponds to finite element methods.  The easiest way to construct a suitable $\shf{D}$ or $\dshf{D}$ is via \emph{pullbacks} and \emph{pushforwards} along order preserving maps, respectively.

\begin{definition}
\label{df:pullback}
  If $f: X\to Y$ is an order preserving function on posets and $\shf{S}$ is a sheaf on $Y$, then the \emph{pullback along $f$} is a sheaf $f^* \shf{S}$ on $X$ whose
  \begin{enumerate}
  \item Stalks are given by $f^* \shf{S}(x) = \shf{S}(f(x))$, and whose
  \item Restrictions are given by $f^* \shf{S}(x \le y) = \shf{S}(f(x) \le f(y))$, which is well-defined because $f$ is order preserving.
  \end{enumerate}
  This construction results in a surjective sheaf morphism $\shf{S} \to f^*\shf{S}$ in which the component maps are identity functions.
\end{definition}

We note that every sheaf morphism factors uniquely into the composition of a pullback morphism with a morphism between sheaves on the same space (see \cite[Prop 3.2]{TSPBook} and \cite[I.4]{Bredon} for a precise statement and proof).

Let us examine sampling first.  Sampling arises from specifying an order-preserving $S: P \to {\bf Open}(X)^{op}$ -- going the opposite way from the morphism we intend to induce.  Then, the pullback $S^* \shf{C}(X,Y)$ is a sheaf over $P$.  Although the poset $P$ for this new sheaf may be smaller than ${\bf Open}(X)^{op}$, the stalks are not necessary much smaller than in the original.  Therefore, we generally are interested in subsheaves of $S^* \shf{C}(X,Y)$ with finite dimensional stalks.  Methodologically, these subsheaves are examined via surjective morphisms $S^* \shf{C}(X,Y) \to \shf{D}$ (see \cite{Robinson_SampleBook}).  In all cases, we must specify the poset $P$ with care.  Usually, it suffices to choose $P={\bf Open}(Z)^{op}$ for some topological space $Z$ with a coarser topology than $X$ as the next few examples show.

\begin{figure}
\begin{center}
\includegraphics[width=3in]{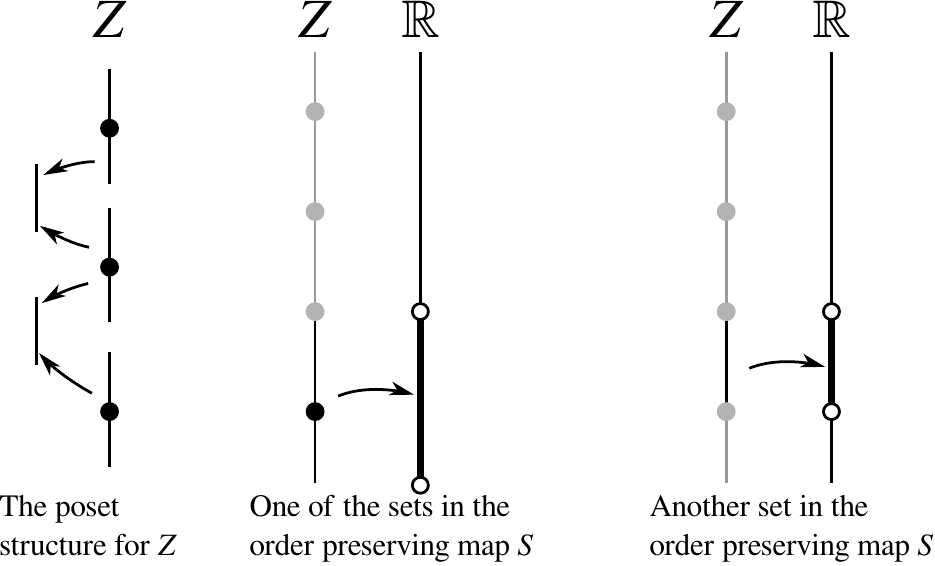}
\caption{The poset $P$ in Example \ref{eg:sample_line} (left), and two images of the sampling map $S$ (middle, right)}
\label{fig:sample_line_z}
\end{center}
\end{figure}

\begin{example}
  \label{eg:sample_line}
  Evenly-spaced discretization of the real line $X=\mathbb{R}$ can be performed by constructing $P$ as the poset consisting of two kinds of sets: $(n,n+1)$ and $(n-1,n+1)$ and for which $(n-1,n+1) \le (n,n+1)$ and $(n-1,n+1) \le (n-1,n)$ for all $n \in \mathbb{P}$ as shown in the left frame of Figure \ref{fig:sample_line_z}.  We construct the sampling function $S: P \to {\bf Open}(\mathbb{R})^{op}$ that reinterprets each element of $P$ as an actual interval of $\mathbb{R}$.  Then the stalk over $(n-1,n+1)$ of the pullback $S^* \shf{C}(\mathbb{R},Y)$ is $C((n-1,n+1),Y)$, while the stalk over $(n,n+1)$ is $C((n,n+1),Y)$.  Altogether, the pullback sheaf $S^* \shf{C}(\mathbb{R},Y)$ is given by the diagram
  \begin{equation*}
    \xymatrix{
  \dotsb    & C((0,1),Y) && C((1,2),Y) &\dotsb \\
C((-1,1),Y) \ar[ur] && C((0,2),Y) \ar[ul]\ar[ur] && C((1,3),Y)\ar[ul] \\
      }
  \end{equation*}
  
  The global sections of the pullback sheaf $S^* \shf{C}(\mathbb{R},Y)$ are precisely the continuous functions $C(\mathbb{R},Y)$.  This means that although we have discretized the topology, there is still more work to be done to reduce a function to a set of function values.  Although there are many ways to do this, even spacing is performed by a surjective morphism
    \begin{equation*}
    \xymatrix{
  \dotsb    & C((0,1),Y) \ar[d] && C((1,2),Y) \ar[d] &\dotsb \\
  C((-1,1),Y) \ar[ur]\ar[d] &Y^n& C((0,2),Y)\ar[d] \ar[ul]\ar[ur] &Y^n& C((1,3),Y)\ar[d]\ar[ul] \\
  Y^{n+1} \ar[ur]^{\sigma_+} && Y^{n+1}  \ar[ul]^{\sigma_-}\ar[ur]^{\sigma_+} && Y^{n+1} \ar[ul]^{\sigma_-}
      }
    \end{equation*}
    where $Y^n$ is the product of $n$ copies of $Y$, the vertical maps evaluate the continuous functions at either $n$ or $n+1$ points, and
    \begin{equation*}
      \sigma_+(y_0, \dotsc,y_n) = (y_0, \dotsc,y_{n-1}), \text{ and } \sigma_-(y_0, \dotsc,y_n) = (y_1, \dotsc,y_n).
    \end{equation*}
    The global sections of the bottom sheaf are infinite sequences of elements of $Y$.
\end{example}

\begin{figure}
\begin{center}
\includegraphics[width=2in]{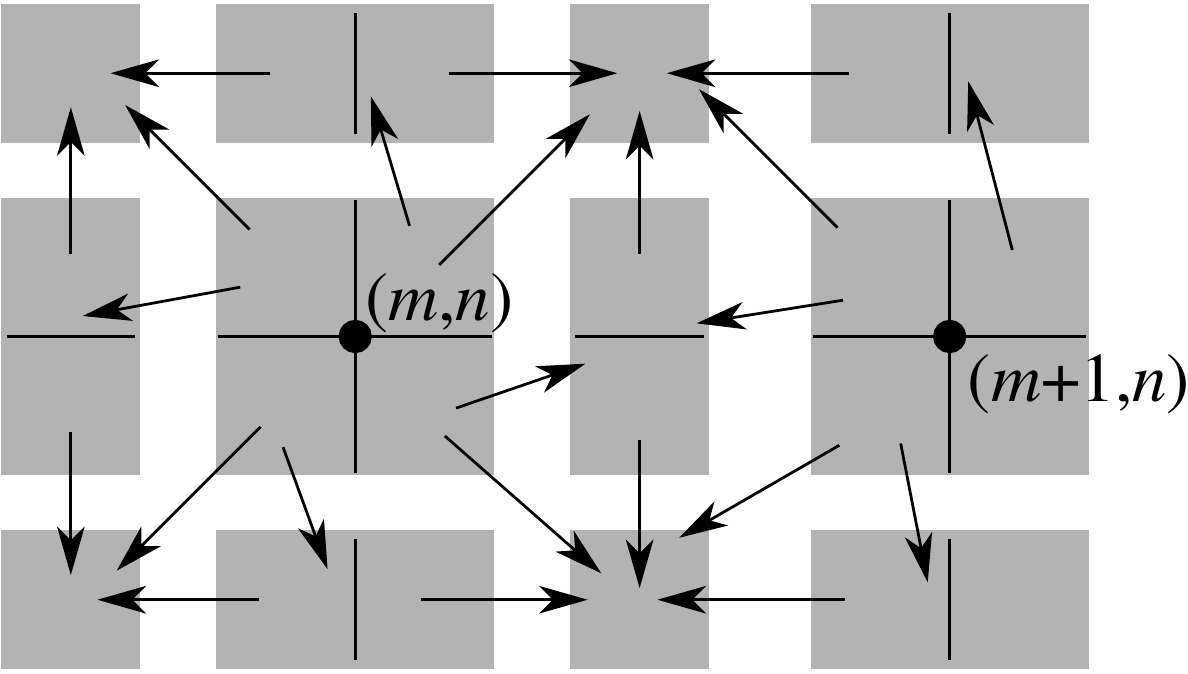}
\caption{The poset $P$ for Example \ref{eg:sample_plane}}
\label{fig:sample_plane_z}
\end{center}
\end{figure}

\begin{figure}
\begin{center}
\includegraphics[width=4in]{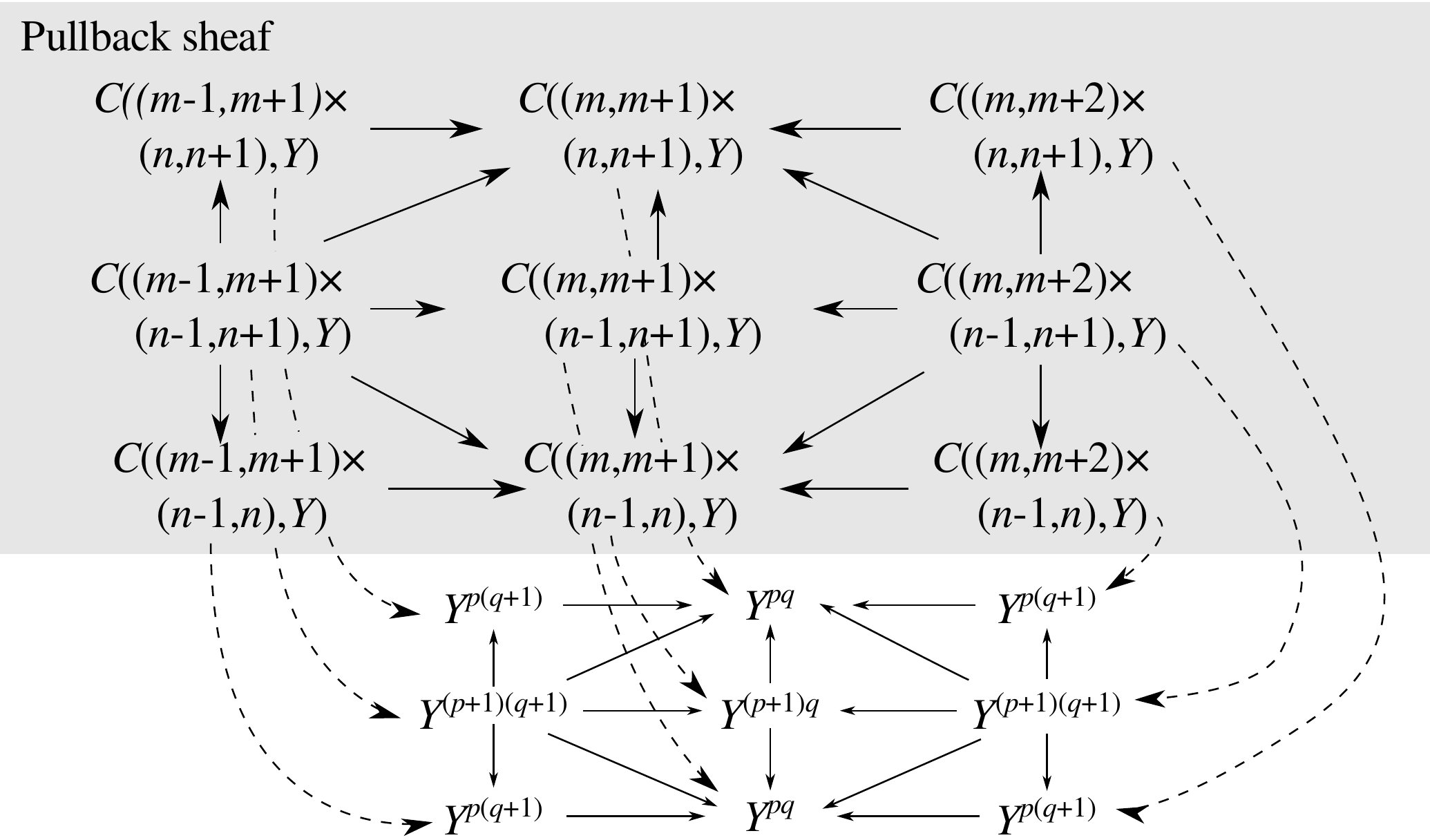}
\caption{The pullback sheaf (shaded) and sampled discretization sheaf (not shaded) for Example \ref{eg:sample_plane} connected by a surjective sheaf morphism (dashed arrows)}
\label{fig:sample_plane_sheaves}
\end{center}
\end{figure}

\begin{example}
  \label{eg:sample_plane}
  Similar to the previous example, if $X=\mathbb{R}^2$, then we can construct a poset $P$ consisting of various rectangular subsets of the plane as shown in Figure \ref{fig:sample_plane_z} and a function $S: P \to {\bf Open}(\mathbb{R}^2)^{op}$ again given by reinterpreting the elements of $P$ as actual subsets of $\mathbb{R}^2$.

  This results in a diagram for $S^* \shf{C}(\mathbb{R}^2,Y)$ like the one shown in the shaded box in Figure \ref{fig:sample_plane_sheaves}.  Again, the space of global sections of $S^* \shf{C}(\mathbb{R}^2,Y)$ is exactly $C(\mathbb{R}^2,Y)$.  Discrete samples of these functions are easily extracted via a sheaf morphism, like the one shown in Figure \ref{fig:sample_plane_sheaves} in which there are $p$ rows and $q$ columns of points on which the functions are evaluated on each unit square.
\end{example}

There are many other choices for $P$ that can be used to discretize $\mathbb{R}^d$ that correspond to cellular decompositions of $\mathbb{R}^d$.

Now let us consider the opposite discretization, which arises by dualization of the previous discretization.  Given $\shf{C}(X,Y)$, we construct the dual sheaf $\dshf{C}(X,Y)$ by taking linear duality of all spaces and maps.  Specifically, for open sets $U \le V$ in $X$ (recall $V \subseteq U$),
\begin{enumerate}
\item The stalk over $U$ is $\dshf{C}(X,Y)(U) = (C(U,Y))^*$, the space of continuous linear functionals $C(U,Y) \to \mathbb{C}$, and
\item The extension from $V$ to $U$ is given by $\dshf{C}(X,Y)(U \le V): (C(V,Y))^* \to (C(U,Y))^*$, the dual of the linear map $C(U,Y)) \to (C(V,Y))$ induced by restricting the domains of the continuous functions.
\end{enumerate}

\begin{example}
Consider the case of $\dshf{C}(X,\mathbb{C})$, whose stalks consist of complex-signed measures that act on continuous functions $C(X,\mathbb{C})$.  Specifically, if $m \in \dshf{C}(X,\mathbb{C})$, then $m$ is a linear functional $C(U,\mathbb{C}) \to \mathbb{C}$, which we can formally write as an integral
\begin{equation*}
  m(f) = \int_U f(x) dm(x).
\end{equation*}
Then, the extension maps of $\dshf{C}(X,\mathbb{C})$ are obtained by extending the measure $m$ by zero.  So if $U \le V$, which means $V \subseteq U$, then for $A \subseteq V$,
\begin{equation*}
  \left(\left(\dshf{C}(U \le V)\right)m\right)(A) = m(A \cap U).
\end{equation*}
\end{example}

If we dualize a surjective sheaf morphism $\shf{C}(X,Y) \to \shf{D}$, we then obtain a morphism between dual sheaves $\dshf{D} \to \dshf{C}(X,Y)$, which plays the role of discretizing of the functions themselves.

Unless $X$ is compact, there is no appropriate hybrid morphism $\dshf{C}(X,Y) \to \shf{C}(X,Y)$ to complete the story, which complicates matters.  There are a variety of ways out of this situation, but the most common one in numerical analysis amounts to considering a more well-behaved subsheaf of $\dshf{C}(X,Y)$ for which a hybrid morphism exists.  For instance, if we restrict our attention to $Y=\mathbb{C}$, the dual sheaf of compactly supported continuous functions $\dshf{C}_c(X,\mathbb{C})$ (see Example \ref{eg:contfunctions2}) is usually the start of a finite elements method.   Each compactly supported continuous function on $U$ is both a continuous function and a continuous linear functional, so each component map of the hybrid morphism $\dshf{C}_c(X,\mathbb{C}) \to \shf{C}(X,\mathbb{C})$ is an identity map as Figure \ref{fig:contfunctions_hybrid} suggests.

\begin{figure}
\begin{center}
\includegraphics[width=3.5in]{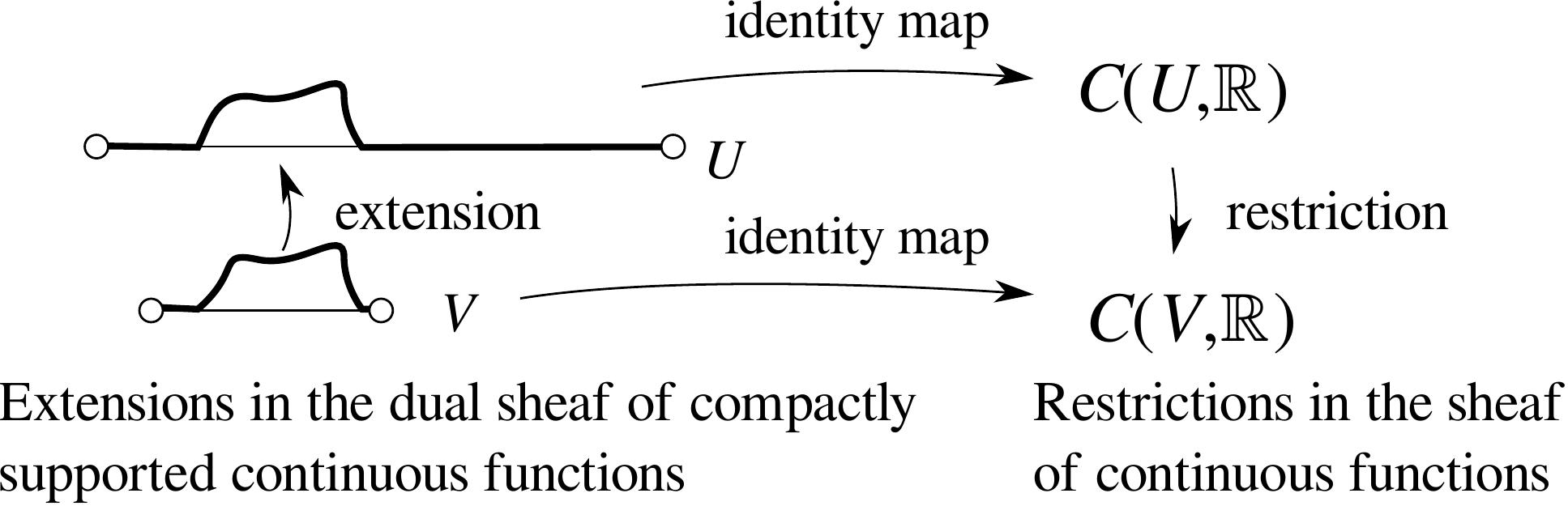}
\caption{The hybrid morphism taking the dual sheaf of compactly supported continuous functions into the sheaf of continuous functions}
\label{fig:contfunctions_hybrid}
\end{center}
\end{figure}

Usually, the poset on which $\dshf{C}_c(X,\mathbb{C})$ is constructed is too large because the topology on $X$ is too fine.  Consider coarsening it to a new poset $P$ along an order preserving map $C: {\bf Open}(X)^{op} \to P$.  This results in a pushforward dual sheaf $C_* \dshf{C}(X,Y)$ (defined below), but we will first address the construction of $P$.  Consider another topological space $Z$ whose points are the same as $X$, and for which the identity map $X \to Z$ is continuous.  This means that the topology of $Z$ is no finer than the topology on $X$.  An appropriate coarsening map $C:{\bf Open}(X)^{op} \to {\bf Open}(Z)^{op}$ is given by
\begin{equation*}
  C(U) = \text{interior}_Z(U),
\end{equation*}
which is well defined since the interiors of sets in a topological space are always uniquely defined.  $C$ is order preserving because $V \subseteq U$ implies $\text{interior}_Z(V) \subseteq \text{interior}_Z(U)$.  

\begin{definition}
\label{df:pushforward_dualsheaf}
  Suppose $f:X\to Y$ is an order preserving function between posets and that $\dshf{R}$ is a dual sheaf on $X$.  The \emph{pushforward} $f_* \dshf{R}$ is a dual sheaf on $Y$ in which
  \begin{enumerate}
    \item Each stalk $(f_* \dshf{R})(c)$ is the space of sections over the set $f^{-1}(c) \subseteq X$, and 
    \item The extension maps $(f_* \dshf{R})(a\le b)$ are given by extending a section $s$ over $f^{-1}(b)$ to one over $f^{-1}(a)$.
  \end{enumerate}
  This construction yields a dual sheaf morphism $f_*\dshf{R} \to \dshf{R}$.
\end{definition}

As in the case of the pullback, we are generally not interested in $C_* \dshf{C}_c(X,\mathbb{C})$, as it serves more as an upper bound on the discretization.  We are more interested in subsheaves $\dshf{D}$ of $C_*\dshf{C}_c(X,\mathbb{C})$, thought of as injective morphisms $\dshf{D} \to C_*\dshf{C}_c(X,\mathbb{C})$.  By composing morphisms, we obtain practical discretizations $\dshf{D} \to C_*\dshf{C}_c(X,\mathbb{C})$ of the original (non-dual) sheaf $\shf{C}(X,\mathbb{C})$.  Evidently $C_* \dshf{C}_c(X,\mathbb{C})$ and $S^* \shf{C}(X,\mathbb{C})$ are quite different, and correspond to very different kinds of discretization methods as the following examples show (compare Examples \ref{eg:sample_line} - \ref{eg:sample_plane}).

\begin{figure}
\begin{center}
\includegraphics[width=4in]{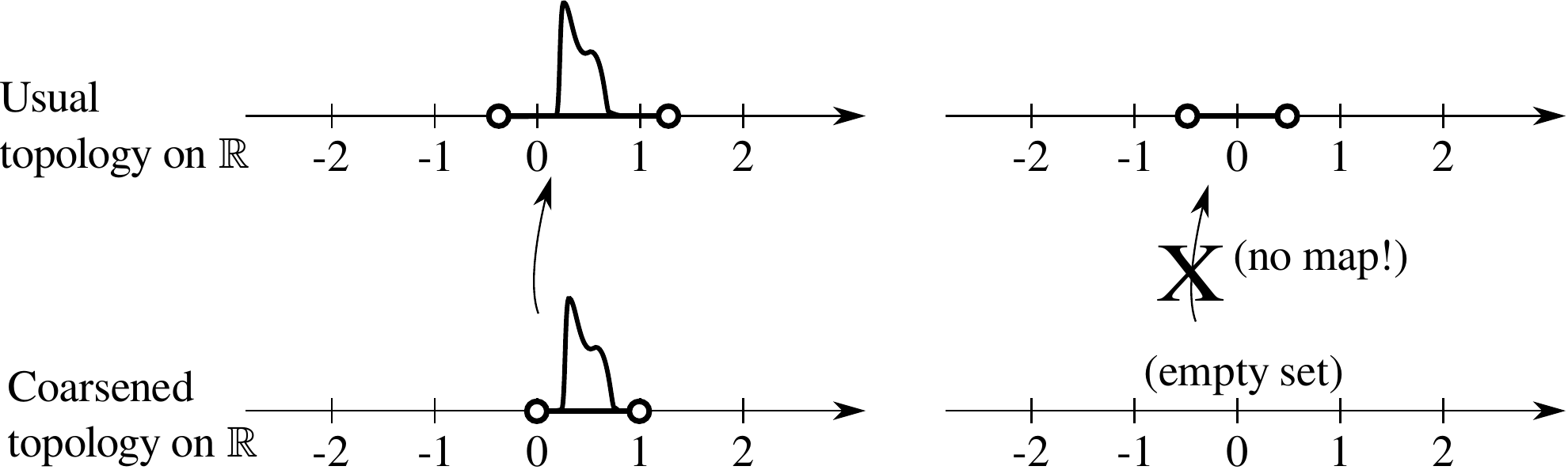}
\caption{The coarsening pushforward morphism on the dual sheaf of compactly supported functions.  Some elements of $\dshf{C}_c(\mathbb{R},\mathbb{R})$ (top row) and their counterparts in $C_c(\mathbb{R},\mathbb{R})$ (bottom row) as described in Example \ref{eg:collapse_line}}
\label{fig:collapse_line1}
\end{center}
\end{figure}

\begin{example}
  \label{eg:collapse_line}
Consider the case of continuous on functions the real line $\mathbb{R}$.  As noted above, Figure \ref{fig:contfunctions_hybrid} shows the hybrid morphism $\dshf{C}_c(\mathbb{R},\mathbb{R})\to \shf{C}(\mathbb{R},\mathbb{R})$.  The next step is to pushforward $\dshf{C}_c(\mathbb{R},\mathbb{R})$ to a coarser topology, such as the topological space $Z=(\mathbb{R},\col{T})$ in which the topology $\col{T}$ is generated by sets of the form $(n-1,n+1)$.  This results in a coarsening map $C:{\bf Open}(\mathbb{R},\text{usual})^{op}\to {\bf Open}(\mathbb{R},\col{T})^{op}$ given by
\begin{equation*}
C(U) = \text{interior}_Z(U),
\end{equation*}
in which small sets get taken to the empty set in ${\bf Open}(\mathbb{R},\col{T})^{op}$.  The relationship between the dual sheaves is suggested by Figure \ref{fig:collapse_line1}.  The dual sheaf $C_*\dshf{C}_c(\mathbb{R},\mathbb{R})$ can be summarized in the diagram
\begin{equation*}
\includegraphics[width=3in]{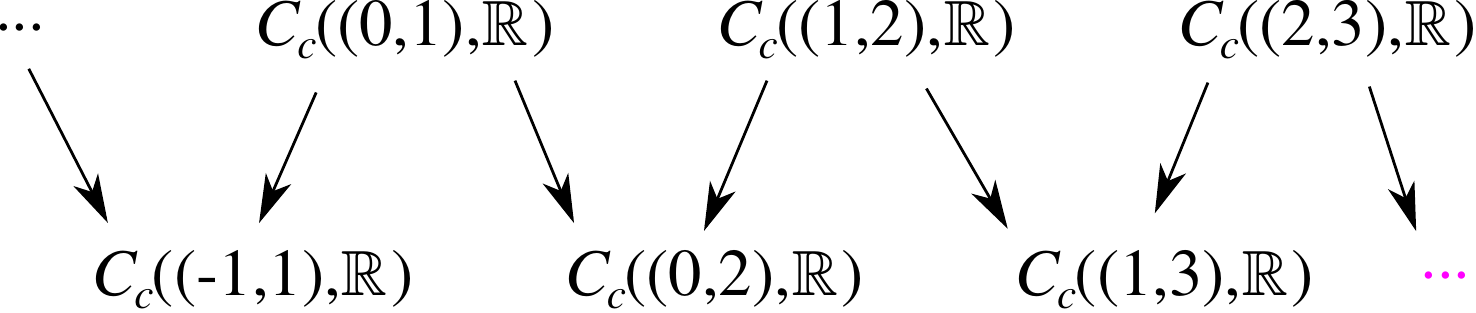}
\end{equation*}
in which the arrows correspond to extending by zero.  Each stalk is still an infinite-dimensional function space, so it is convenient to replace these with smaller, or at least more convenient, spaces.  Ideally, we would like a dual sheaf morphism $\dshf{D}\to C_*\dshf{C}_c(\mathbb{R},\mathbb{R})$, which would provide a smaller description for each stalk.  The primary constraint is that the following kind of diagram commute
\begin{equation*}
\xymatrix{
\dshf{D}((n,n+1)) \ar[d]\ar[r] & C_c((n,n+1),\mathbb{R}) \ar[d]\\
\dshf{D}((n-1,n+1)) \ar[r] & C_c((n-1,n+1),\mathbb{R})\\
}
\end{equation*}
where the vertical arrows correspond to extending by zero.  This means that ideally $\dshf{D}((n,n+1))$ is a subspace of $\dshf{D}((n-1,n+1))$.  This requirement is neatly satisfied by multi-scale functions, such as continuous wavelet bases or spline bases.  For instance, we could let $\dshf{D}((n,n+1))$ be spanned by the set of raised cosines
\begin{equation*}
1-\cos (2m \pi x) \text{ for } x \in (n,n+1)
\end{equation*}
and $\dshf{D}((n-1,n+1))$ be spanned by similar kinds of functions.
\end{example}

\begin{example}
  \label{eg:collapse_plane}
  Rectangular sampling of the plane is achieved in exactly the same sort of way as in the previous example.  One may obtain the diagram of the dual sheaf $C_* \dshf{C}_c(\mathbb{R}^2,\mathbb{R})$ by reversing the arrows in Figure \ref{fig:sample_plane_z} and treating each arrow as an extension by zero.  Although the resulting dual sheaf is fairly large, more practical spline or wavelet bases are easily constructed in a dual sheaf $\dshf{D}$ along with a dual sheaf morphism into $C_* \dshf{C}_c(\mathbb{R}^2,\mathbb{R})$.
\end{example}

\begin{lemma}
  \label{lem:pushpull}
  Pulling back is a contravariant functor, while and pushing forward is a covariant functor.  Explicitly, if $f:X \to Y$ and $g: Y \to Z$ then $f^* g^* = (g \circ f)^*$ and $g_* f_* = (g \circ f)_*$.
\end{lemma}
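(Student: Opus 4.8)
\section*{Proof proposal for Lemma \ref{lem:pushpull}}

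The plan is to prove both identities by directly unwinding Definitions \ref{df:pullback} and \ref{df:pushforward_dualsheaf}: in each case the two functors agree \emph{on the nose} (up to the canonical identification of the relevant index sets), so there is nothing to estimate, only bookkeeping to carry out carefully.

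First I would treat the pullback. Fix a sheaf $\shf{S}$ on $Z$ and an element $x \in X$. Applying Definition \ref{df:pullback} twice, the stalk of $f^* g^* \shf{S}$ at $x$ is $(g^*\shf{S})(f(x)) = \shf{S}(g(f(x))) = \shf{S}((g\circ f)(x))$, which is precisely the stalk of $(g\circ f)^* \shf{S}$ at $x$. Likewise, for $x \le x'$ in $X$ the restriction $f^* g^*\shf{S}(x \le x')$ equals $(g^*\shf{S})(f(x) \le f(x')) = \shf{S}(g(f(x)) \le g(f(x')))$, the same map that Definition \ref{df:pullback} assigns to $(g\circ f)^*\shf{S}(x\le x')$; all of these are well defined because $f$ and $g$, hence $g\circ f$, are order preserving. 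Since the associated morphisms out of $\shf{S}$ have identity component maps, they coincide too. Together with the obvious identities $\id_X^* \shf{S} = \shf{S}$, this gives the contravariant functoriality, in particular $f^*g^* = (g\circ f)^*$.

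Next I would handle the pushforward, which is the more delicate half. Fix a dual sheaf $\dshf{R}$ on $X$ and $z \in Z$. By Definition \ref{df:pushforward_dualsheaf}, the stalk $(g_* f_* \dshf{R})(z)$ is the space of sections of $f_* \dshf{R}$ over $g^{-1}(z) \subseteq Y$; such a section is a compatible family assigning to each $y \in g^{-1}(z)$ a section of $\dshf{R}$ over the fiber $f^{-1}(y) \subseteq X$. The key observation is that the fibers $f^{-1}(y)$, for $y \in g^{-1}(z)$, are pairwise disjoint with union $(g\circ f)^{-1}(z) = f^{-1}(g^{-1}(z))$, and that an order relation $x \le x'$ in $f^{-1}(g^{-1}(z))$ linking two distinct fibers $f^{-1}(y)$, $f^{-1}(y')$ can only occur when $y \le y'$, since $x \le x'$ forces $f(x) \le f(x')$. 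Consequently the compatibility conditions defining a section of $\dshf{R}$ over $f^{-1}(g^{-1}(z))$ split into the within-fiber conditions (each restriction to $f^{-1}(y)$ is a section, i.e.\ an element of $(f_*\dshf{R})(y)$) and the between-fiber conditions (which, via the ``extending a section'' clause of Definition \ref{df:pushforward_dualsheaf}, are exactly the conditions that the family $(s_y)_{y\in g^{-1}(z)}$ be a section of $f_* \dshf{R}$ over $g^{-1}(z)$). This yields a canonical bijection $(g_*f_*\dshf{R})(z) \cong ((g\circ f)_*\dshf{R})(z)$, and the same partition argument identifies the two families of extension maps, so $g_* f_* = (g\circ f)_*$; compatibility with $(\id_X)_*\dshf{R} = \dshf{R}$ and with the induced morphisms $f_*\dshf{R}\to\dshf{R}$ is then immediate.

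The main obstacle I anticipate is exactly this last identification: making precise, from Definition \ref{df:section}'s notion of a local section over a subset and the (somewhat informal) ``extending a section'' clause of Definition \ref{df:pushforward_dualsheaf}, that a section of $f_*\dshf{R}$ over $g^{-1}(z)$ carries the same data as a section of $\dshf{R}$ over $(g\circ f)^{-1}(z)$. Everything hinges on the bookkeeping fact that the fibers $f^{-1}(y)$ partition $f^{-1}(g^{-1}(z))$, together with order preservation to control which order relations survive the decomposition; once those are in hand, the rest is a mechanical check that the two descriptions of the stalk and of the extension maps coincide.
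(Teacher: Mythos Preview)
Your proposal is correct and follows essentially the same approach as the paper: both arguments unwind Definitions~\ref{df:pullback} and~\ref{df:pushforward_dualsheaf} directly on stalks and structure maps. The paper's proof writes the pushforward identification $(f_*\dshf{R})(g^{-1}(z)) = \dshf{R}(f^{-1}(g^{-1}(z)))$ as a single equality without further comment, whereas you supply the fiber-partition justification for it; this is exactly the step you flagged as the main obstacle, and your treatment of it is more thorough than the paper's own.
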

\begin{proof}
Suppose that $\shf{S}$ is a sheaf on $Z$, which we will pull back to $X$.  According to Definition \ref{df:pullback}, we can construct pullbacks in sequence $\shf{S} \to g^*\shf{S} \to f^* g^* \shf{S}$ or all at once $\shf{S} \to (g \circ f)^* \shf{S}$.  In both cases, the 
  \begin{enumerate}
  \item Stalks are given by $\left(f^* (g^* \shf{S})\right)(x) = \left(g^* \shf{S}\right)(f(x)) = \shf{S}(g(f(x)) = (g \circ f)^* \shf{S} (x)$, and
  \item Restrictions are given by $\left(f^* (g^* \shf{S})\right)(x \le y) = (g^*\shf{S})(f(x) \le f(y)) = \shf{S}((g(f(x)) \le g(f(y))) = (g \circ f)^* \shf{S} (x \le y)$,
  \end{enumerate}
  which establishes the first statement.

  Suppose that $\dshf{R}$ is a dual sheaf on $X$, which we will push forward along $f$ and $g$.  Following Definition \ref{df:pushforward_dualsheaf}, we can construct the sequence of dual sheaf morphisms $g_*(f_* \dshf{R}) \to f_* \dshf{R} \to \dshf{R}$.  We can also construct a morphism $(g \circ f)_* \dshf{R} \to \dshf{R}$.  If we use the notation $\dshf{R}(f^{-1}(x))$ to represent the space of sections of $\dshf{R}$ over $f^{-1}(x)$, we have the following
\begin{equation*}
\left(g_*(f_* \dshf{R})\right)(x) =(f_* \dshf{R})(g^{-1}(x))=\dshf{R}\left(f^{-1}(g^{-1}(x))\right)=\dshf{R}\left((g\circ f)^{-1}(x)\right)=((g\circ f)_*\dshf{R})(x)
\end{equation*}
for the stalks.  A similar derivation establishes that the extensions in both dual sheaves are the same.
\end{proof}

\section{Simultaneous systems of equations}
\label{sec:simultaneous}

Consider a multi-model system that consists of a set of variables $V$ whose values lie in sets $W_v$ for $v\in V$, and are interrelated through a set of equations $E$.  Each equation $e\in E$ specifies a list of variables $V_e \subset V$ and a subset $S_e \subseteq \prod_{v \in V_e} W_v$ of solutions.

\begin{example}
  \label{eg:aggregation_1}
Consider the following system of equations in three variables $V=\{x,y,z\}$
\begin{eqnarray*}
  x^2 + y^2 - 4 &=& 0,\\
  y &=& x^2 + z^2 + 1.
\end{eqnarray*}
In our formalism, the spaces of values for the variables should be specified.  For instance, $W_x = \mathbb{R}$, $W_y = \mathbb{R}$, $W_z = \mathbb{R}$.  The two equations need to be labeled, so something like $E=\{1,2\}$ will do.  Given these labels, the set of variables involved in each equation are $V_1 = \{x,y\}$ and $V_2 = \{x,y,z\}$.  Given that, the set of solutions for each are easily described, namely $S_1 = \text{circle of radius }2$ and $S_2 = \text{paraboloid}$.
\end{example}

There are natural projection functions $\pr_x: \prod_{v \in V_e} W_v \to W_x$ for each $x \in V_e$.  Since these projection functions restrict to functions on $S_e$, like $\pr_x: S_e \to W_x$, it is natural to define the following poset structure.  Let $P = V \sqcup E$, so elements of $P$ are either variables or equations, and define $e \le v$ if $v \in V_e$.  This is generally called a \emph{factor graph} in the literature.  If we assume that $\le$ is reflexive, then this defines a partial order on $P$.  A sheaf $\shf{E}'$ on $(P,\le)$ can then be given by specifying that
\begin{enumerate}
\item $\shf{E}'(v)=W_v$ for each variable $v$,
\item $\shf{E}'(e) = \prod_{v \in V_e} W_v$ for each equation $e$, and
\item $\shf{E}'(e \le v) = \pr_v$ whenever $e \le v$.  
\end{enumerate}

\begin{definition}
  The sheaf $\shf{E}'$ is called an \emph{aggregation sheaf} associated to the collection of variables $V$ and equations $E$.
\end{definition}

\begin{example}
  \label{eg:aggregation_2}
  Continuing Example \ref{eg:aggregation_1}, the system of equations yields the following diagram for the sheaf $\shf{E}'$
\begin{equation*}
  \xymatrix{
    \shf{E}'(x) = W_x = \mathbb{R}& \shf{E}'(y) = W_y = \mathbb{R} & \shf{E}'(z) = W_z = \mathbb{R} \\
    \shf{E}'(1) = W_x \times W_y = \mathbb{R}^2 \ar[u]\ar[ur] & \shf{E}'(2) = W_x \times W_y \times W_z = \mathbb{R}^3 \ar[ul] \ar[u] \ar[ur] \\
     }
\end{equation*}
The poset structure for $P=\{x,y,z,1,2\}$ is clear: the top row is for the variables and the bottom row is for equations.  Each of the arrows in the diagram is a projection onto the space of values of a variable.  The sections of $\shf{E}'$ are determined by elements of $W_x \times W_y \times W_x \times W_y \times W_z$ in which the two $x$ components agree and the two $y$ components agree.  
\end{example}

\begin{proposition}
Assuming that each variable $v$ appears in at least one equation, the set of sections of $\shf{E}'$ is in one-to-one correspondence with $\prod_{v \in V} W_v$.
\end{proposition}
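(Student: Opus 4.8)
The plan is to exhibit an explicit bijection $\Phi$ from the set of global sections of $\shf{E}'$ to $\prod_{v\in V}W_v$, namely the map that restricts a section to the variable elements of the poset $P = V \sqcup E$: set $\Phi(s) = (s(v))_{v\in V}$. The first step is to unwind Definition \ref{df:section} for this particular poset. The only nontrivial order relations in $P$ are the relations $e \le v$ with $v\in V_e$ (variables are maximal, equations minimal, and there are no relations among variables or among equations), so a section $s$ amounts to a choice of $s(v)\in W_v$ for each $v\in V$ together with a choice of $s(e)\in \prod_{v\in V_e}W_v$ for each $e\in E$, subject to the single family of constraints $\pr_v(s(e)) = s(v)$ for every $e$ and every $v\in V_e$.

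For surjectivity, I would take an arbitrary $(w_v)_{v\in V}\in\prod_{v\in V}W_v$ and define $s$ by $s(v) = w_v$ and $s(e) = (w_v)_{v\in V_e}$ (the latter is a well-defined element of $\shf{E}'(e) = \prod_{v\in V_e}W_v$). That $s$ is a section is immediate: $\pr_v(s(e)) = w_v = s(v)$ whenever $e\le v$, and there are no other comparabilities to check; hence $\Phi(s) = (w_v)_{v}$. For injectivity, suppose $s,s'$ are sections with $s(v) = s'(v)$ for all $v\in V$; I must show $s(e) = s'(e)$ for all $e\in E$. Since $s(e)$ and $s'(e)$ lie in the product $\prod_{v\in V_e}W_v$, they agree as soon as all their coordinate projections agree, and $\pr_v(s(e)) = s(v) = s'(v) = \pr_v(s'(e))$ by the section condition; thus $s = s'$.

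I do not anticipate a genuine obstacle: the content is entirely in correctly reading off which comparabilities $p\le q$ must be verified and in the tautology that an element of a product is pinned down by its projections. The standing hypothesis that every variable lies in at least one equation enters only as the dual sanity check — it is exactly the condition under which a section is equivalently determined by its restriction $(s(e))_{e\in E}$ to the equation stalks, since then each $s(v)$ is recoverable as $\pr_v(s(e))$ for any $e$ with $v\in V_e$ — so I would include a one-line remark to that effect. I would also point out, as Example \ref{eg:aggregation_2} already illustrates, that the proposition says precisely that the aggregation sheaf imposes no constraint beyond single-valuedness of the variables: the solution sets $S_e$ have not yet entered the construction.
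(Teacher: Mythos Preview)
Your proof is correct and follows essentially the same approach as the paper's: both argue that a section determines and is determined by the tuple $(s(v))_{v\in V}$, with your version simply making the bijection $\Phi$ and the injectivity/surjectivity checks explicit where the paper leaves them as one-line assertions. Your observation that the hypothesis on each variable appearing in some equation is not actually needed for the stated bijection (only for the dual recovery from the equation stalks) is accurate and goes slightly beyond what the paper remarks.
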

\begin{proof}
  Certainly each section of $\shf{E}'$ specifies all values of all variables, since each variable is in $P$ and its stalk is its respective space of values.  On the other hand, specifying the value of each variable certainly specifies a section of $\shf{E}'$.  
\end{proof}

\begin{definition}
Clearly the aggregation sheaf $\shf{E}'$ does not account for the \emph{actual equations}, since it merely specifies which variables are involved.  To remedy this information loss, let us construct the following subsheaf $\shf{E}$ of $\shf{E}'$, called the \emph{solution sheaf} of the system of equations:
\begin{enumerate}
\item $\shf{E}(v)=W_v$ for each variable $v$,
\item $\shf{E}(e) = S_e$ for each equation $e$ (recall that $S_e \subseteq \prod_{v \in V_e} W_v$ is the set of solutions to $e$), and
\item $\shf{E}(e \le v) = \pr_v$ whenever $e \le v$.  
\end{enumerate}
\end{definition}

\begin{proposition}
  \label{prop:simultaneous}
  Sections of $\shf{E}$ consist of solutions to the simultaneous system of equations.
\end{proposition}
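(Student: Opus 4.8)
The plan is to exhibit an explicit bijection between the global sections of $\shf{E}$ and the solutions of the simultaneous system, mirroring the argument already used for the aggregation sheaf $\shf{E}'$. First I would unwind the definitions. A global section of $\shf{E}$ is a choice $s \in \prod_{p \in P}\shf{E}(p)$ with $\shf{E}(e \le v)(s(e)) = s(v)$ for every comparable pair $e \le v$; since the only nontrivial comparabilities in the factor-graph poset $P = V \sqcup E$ are of the form $e \le v$ with $v \in V_e$, and $\shf{E}(e \le v) = \pr_v$, this says exactly that $s$ assigns to each variable $v$ a value $s(v) \in W_v$, to each equation $e$ a tuple $s(e) \in S_e$, and that the $v$-coordinate of $s(e)$ equals $s(v)$ for every $v \in V_e$.

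Next I would describe the two maps. Given a section $s$, set $x_v := s(v)$ for each $v \in V$; I then need to check $(x_v)_{v \in V_e} \in S_e$ for every $e \in E$. This follows because the compatibility condition forces $\pr_v(s(e)) = s(v) = x_v$ for all $v \in V_e$, so the tuple $s(e) \in \prod_{v \in V_e} W_v$ agrees coordinate-wise with $(x_v)_{v \in V_e}$, whence $(x_v)_{v\in V_e} = s(e) \in S_e$. Thus $(x_v)_{v\in V}$ solves the system. Conversely, given an assignment $(x_v)_{v \in V}$ solving the system, define $s(v) := x_v$ and $s(e) := (x_v)_{v \in V_e}$; the membership $s(e) \in S_e = \shf{E}(e)$ is precisely the statement that the assignment solves equation $e$, and the identity $\pr_v(s(e)) = x_v = s(v)$ shows that $s$ is a section.

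Finally I would observe that these two constructions are mutually inverse: starting from a section $s$ and passing to the solution $(s(v))_{v\in V}$ and back recovers $s$, because the equation-stalk value $s(e)$ is already forced to equal $(s(v))_{v \in V_e}$, and the other composite is visibly the identity on solutions. This yields the claimed one-to-one correspondence. I do not anticipate a genuine obstacle here; the only points requiring a moment's care are the remark that the compatibility conditions on the pairs $e \le v$ rigidly determine each equation-stalk coordinate $s(e)$ from the variable values — so a section carries no information beyond $(s(v))_{v\in V}$ — and the verification that $\shf{E}$ is a legitimate subsheaf of $\shf{E}'$, i.e. that the projections $\pr_v$ indeed carry $S_e$ into $W_v$ and that the composition axiom of Definition~\ref{df:sheaf_poset} holds vacuously, since $P$ contains no chain of three distinct comparable elements.
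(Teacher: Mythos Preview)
Your proof is correct and follows essentially the same approach as the paper's: both exhibit the bijection by reading off variable values from a section and, conversely, assembling a section from a solution via $s(v)=x_v$ and $s(e)=(x_v)_{v\in V_e}$. Your version is more careful in explicitly checking that the two maps are mutually inverse and that the poset has no length-two chains, whereas the paper's proof is terser and leaves these verifications implicit.
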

\begin{proof}
  A section $s$ of $\shf{E}$ specifies an element of $s(e) \in S_e$ for each equation $e \in E$ which satisfies that equation.  Conversely, if we start with a solution to the simultaneous system of equations, that is a specification of an element $x \in \prod_{v\in V} W_v$ for which the projection of $x$ onto $\prod_{v \in V_e} W_v$ lies in $S_e$.  This can be translated to an assignment onto each variable $v\in V$ given by
  \begin{equation*}
    s(v) = \pr_v x,\text{ and } s(e) = \pr_{\shf{E}'(e)} x
  \end{equation*}
  which by construction we observe $s(e) \in S_e = \shf{S}(e)$.
\end{proof}

\begin{example}
Continuing where we left off with Example \ref{eg:aggregation_2}, the sheaf $\shf{E}$ is a subsheaf of $\shf{E}'$, whose diagram is given by
\begin{equation*}
  \xymatrix{
    \shf{E}(x) = \mathbb{R}& \shf{E}(y) = \mathbb{R} & \shf{E}(z) = \mathbb{R} \\
    \shf{E}(1) = \{(x,y):x^2 + y^2 - 4 = 0\} \ar[u]\ar[ur] & \shf{E}(2) = \{(x,y,z):y = x^2 + z^2 + 1\} \ar[ul] \ar[u] \ar[ur] \\
     }
\end{equation*}
where again the arrows are coordinate projections.  Sections are determined by the elements $(x,y,z) \in \mathbb{R}^3$ that lie on the intersection between the cylinder $\{(x,y,z):x^2 + y^2 - 4 = 0\}$ (notice that $z$ is now present!) and the paraboloid $\{(x,y,z):y = x^2 + z^2 + 1\}$.  
\end{example}

\begin{example}
  \label{eg:stoichiometry}
The sheaf model of multi-equation systems is helpful in organizing complicated systems, and is related to labeled, permutation-directed hypergraphs.  For instance, labeled, directed hypergraphs\footnote{A \emph{hypergraph} is literally a set of sets of vertices.  Each element of a hypergraph is called a \emph{hyperedge}.  A hypergraph is given a direction by specifying the order of vertices in each hyperedge.} can be used to model systems of stochiometric equations \cite{Benko_2009}.  The interrelations between reagents and reactions give rise to a system of equations, which in turn defines a solution sheaf.

For instance, a very simple model of photosynthesis and combustion is given by the two reactions
\begin{eqnarray*}
p: CO_2 +2 H_2 O &\to& CH_2 O + O_2,\\
c: 2 H_2 + O_2 &\to& 2 H_2 O.
\end{eqnarray*}
In order to encode this as a system of equations, we consider the set of concentrations of each compound $\{CO_2, H_2O, CH_2O, O_2, H_2\}$ and the two reactions $\{p, c\}$.  Thinking of the compounds as vertices and the reactions as hyperedges, we are led to consider the hypergraph 
\begin{equation*}
\{p=[CO_2,H_2O,CH_2O,O_2],c=[H_2,O_2,H_2O]\}
\end{equation*}
where the square brackets indicate that order of vertices is important within a hyperedge.  Diagrammatically, one usually thinks of the inclusion structure of a hypergraph, leading to the diagram
\begin{equation*}
\xymatrix{
& p & & c  &\\
CO_2 \ar[ur]& H_2O \ar[u] \ar[urr] & CH_2O \ar[ul] & O_2 \ar[u] \ar[ull]& H_2 \ar[ul] \\
}
\end{equation*}
because the arrows represent subset relations.  However, for the actual values of the variables, concentrations of the compounds, it is more natural to consider the dual diagram in which the arrows represent projections.  In either case, the resulting diagram is a finite poset.  The reactions can be encoded in a sheaf by the use of appropriate spaces of values.  For simplicity, let us consider the state of chemical equilibrium, in which each concentration is a constant, non-negative real number.  The stalk over $p$ or $c$ should be a subspace on which the reaction equation is satisfied.  Namely, if
\begin{equation*}
S_p = \{(a,b,c,d) \in (\mathbb{R}^+)^4 : a + 2 b - c - d = 0\}
\end{equation*}
and
\begin{equation*}
S_c = \{(a,b,c) \in (\mathbb{R}^+)^4 : 2a + b - 2c = 0\},
\end{equation*}
then the sheaf describing the chemical equilibrium is
 \begin{equation*}
\xymatrix{
\mathbb{R}^+& \mathbb{R}^+& \mathbb{R}^+& \mathbb{R}^+& \mathbb{R}^+\\
    & S_p \ar[ur] \ar[u] \ar[ul] \ar[urr]  &      & S_c \ar[ull] \ar[u] \ar[ur] &    \\
}
\end{equation*}
in which all arrows are coordinate projections.
\end{example}

We have thus far been concerned with systems of arbitrary equations, but often there is more structure available.  When this happens the stalks of the sheaf $\shf{E}$ over the variables can be reduced in size, which results in computational savings.  Many numerical approximation schemes (and other models, too) are written in an explicit form, in which each equation looks like
\begin{equation*}
  v_{n+1} = f(v_1,\dots, v_n).
\end{equation*}
In this case, one often represents the relationship among the variables using a \emph{dependency graph}.
\begin{definition}
  \label{def:explicit}
  A system of equations $E$ on variables $V$ is called \emph{explicit} if there is an injective function $\gamma: E \to V$ selecting a specific variable from each equation so that each equation $e \in E$ has the form
  \begin{equation*}
    \gamma(e) = f_e(v_1,\dotsc,v_n),
  \end{equation*}
  so that $\gamma(e)\in V_e$ and $\gamma(e) \notin \{v_1,\dotsc,v_n\}$.  Any variable outside the image of $\gamma$ is said to be \emph{free} or \emph{independent}. Those variables in the image of $\gamma$ are called \emph{dependent}.

A \emph{variable dependency graph} for an explicit system is a directed graph $G$ whose vertices are given by the union $E \cup (V \backslash \gamma(E))$ consisting of the set of equations and free variables, such that the following holds: 
\begin{enumerate}
\item Free variables have in-degree zero,
\item If $e$ is a vertex of $G$ corresponding to an equation whose incoming edges are given by $(e_1 \to e)$, ..., $(e_n \to e)$, then the equation $e\in E$ is of the form
  \begin{equation*}
    \gamma(e) = f_e(\gamma(e_1),\dotsc,\gamma(e_n)),
  \end{equation*}
where we have abused notation slightly to allow $\gamma(v) = v$ for a free variable $v$.
\end{enumerate}
\end{definition}

\begin{example}
  \label{eg:lorenz3}
The Lorenz system defined by \eqref{eq:lorenz} in Section \ref{sec:diagram} is an explicit system.  Its dependency graph is shown in Figure \ref{fig:lorenz1}.
\end{example}

\begin{example}
  \label{eg:aggregation_explicit_1}
Consider the system of equations given by 
\begin{eqnarray*}
u_1 &=& f(u_2,u_3),\\
u_2 &=& g(u_3,u_4).
\end{eqnarray*}
Notice that this is an explicit system with two free variables $u_3, u_4$ and no over-determined variables.  The variable dependency graph for this system is given by
\begin{center}
\includegraphics[width=1.25in]{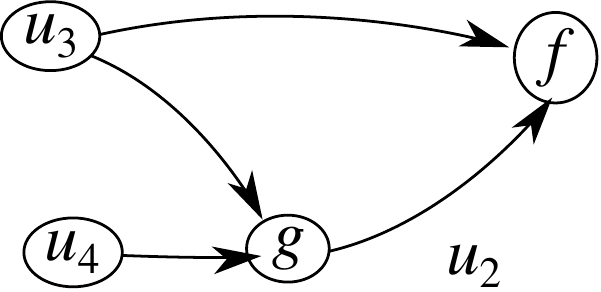}
\end{center}
Notice that the non-free variables $u_1, u_2$ do not appear as vertices in the variable dependency graph.  Rather, $u_2$ is present as the edge out of $g$, while $u_1$ is not shown at all.
\end{example}

Explicit systems need not have acyclic dependency graphs, as the next example shows.

\begin{example}
\label{eg:aggregation_explicit_1a}
The explicit system given by the system
\begin{center}
\includegraphics[width=2in]{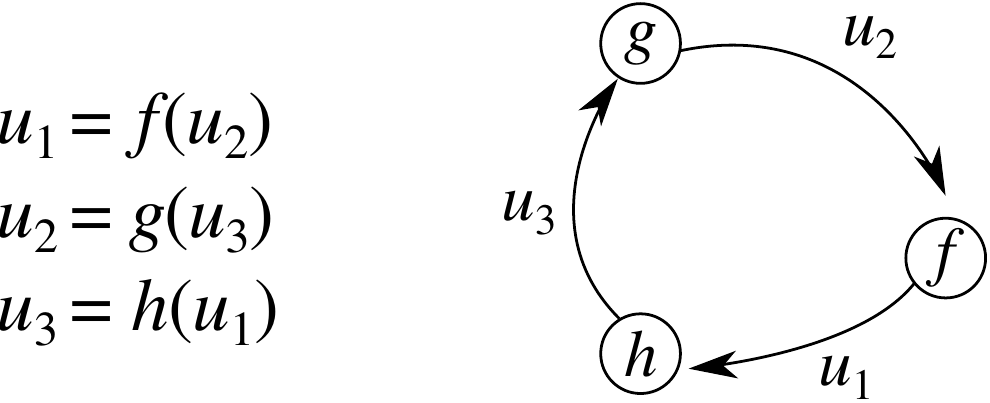}
\end{center}
has a variable dependency graph with a cycle:
\end{example}

\begin{definition}
If $E$ is an explicit system of equations with variables in $V$, then we can construct the \emph{explicit solution sheaf} $\shf{G}$ whose sections are the simultaneous solutions of $E$ using a slight modification of the recipe for $\shf{E}$.  The underlying poset for $\shf{G}$ is still given by the union of the variables and the equations, but the stalks and restrictions are different
\begin{enumerate}
\item $\shf{G}(v) = W_v$ for each variable $v\in V$, just as before
\item $\shf{G}(e) = \prod_{x \in V_e \backslash \gamma(e)} W_x$ (recall that $\gamma(e) \in V_e$),
\item $\shf{G}(e \le \gamma(e)) = f_e$, and
\item $\shf{G}(e \le v): \prod_{x \in V_e \backslash \gamma(e)} W_x \to W_v$ is given by an appropriate projection if $v \not= \gamma(e)$.
\end{enumerate}
\end{definition}

\begin{example}
  \label{eg:lorenz4}
The explicit solution sheaf for the Lorenz system defined by \eqref{eq:lorenz} in Section \ref{sec:diagram} is shown in Figure \ref{fig:lorenz2}.
\end{example}

\begin{example}
  \label{eg:aggregation_explicit_2}
  Continuing Example \ref{eg:aggregation_explicit_1}, the explicit solution sheaf $\shf{G}$ has diagram at left below, while the solution sheaf $\shf{E}$ has diagram at right below,
\begin{center}
\includegraphics[width=3.75in]{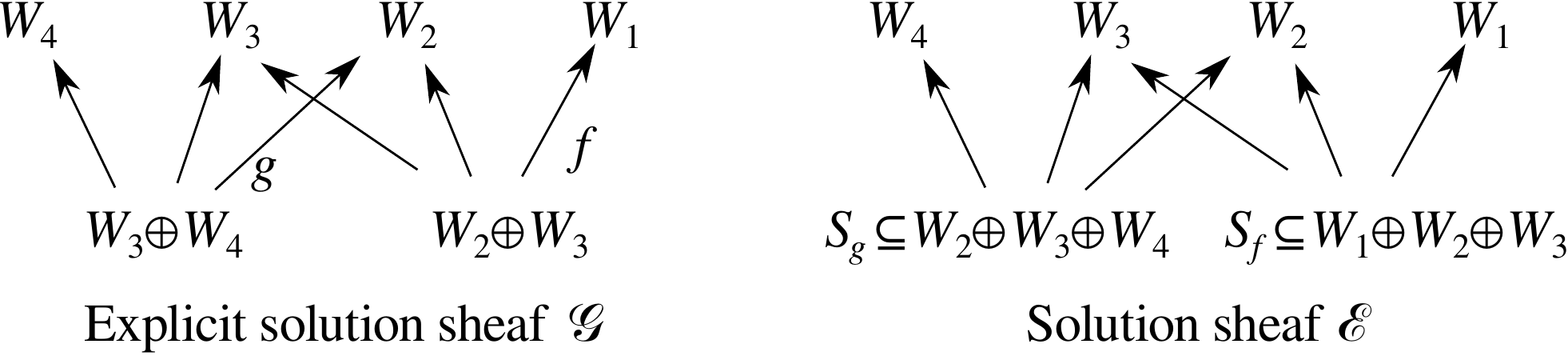}
\end{center}  
where the unlabeled arrows are projection functions.  Notice that the primary difference is in the stalks over the equations; the explicit solution sheaf has a somewhat simpler construction.
\end{example}

\begin{proposition}
  The sections of an explicit solution sheaf $\shf{G}$ are in one-to-one correspondence with the simultaneous solutions of its system of equations.
\end{proposition}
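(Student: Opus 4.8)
The plan is to establish the bijection exactly as in the proof of Proposition~\ref{prop:simultaneous}, by exhibiting maps in both directions and checking that they are mutually inverse; the only new wrinkle is that the stalk $\shf{G}(e)$ over an equation $e$ omits the coordinate for the dependent variable $\gamma(e)$, so the equation itself is enforced by the restriction $\shf{G}(e\le\gamma(e))=f_e$ rather than by a projection. First I would observe that the underlying poset $P = V\sqcup E$ has no strict chains of length two --- the only strict relations are $e < v$ for $e\in E$ and $v\in V_e$ --- so the cocycle condition (3) in Definition~\ref{df:sheaf_poset} is vacuous, and $\shf{G}$ is a genuine sheaf as soon as each $f_e$ and each projection is a well-defined function, which it is by construction.

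For the forward direction, let $s$ be a section of $\shf{G}$ and set $x = (s(v))_{v\in V} \in \prod_{v\in V} W_v$. Fix an equation $e$. The section condition applied to each pair $e \le v$ with $v \in V_e\setminus\gamma(e)$ says $\pr_v(s(e)) = s(v)$, so $s(e)$ is exactly the tuple $\big(s(v)\big)_{v\in V_e\setminus\gamma(e)}$; and the section condition applied to $e \le \gamma(e)$ says $f_e(s(e)) = s(\gamma(e))$. Combining these, $s(\gamma(e)) = f_e\big((s(v))_{v\in V_e\setminus\gamma(e)}\big)$, which is precisely the assertion that $x$ satisfies equation $e$ in the sense of Definition~\ref{def:explicit}. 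Since $e$ was arbitrary, $x$ is a simultaneous solution of $E$.

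For the reverse direction, given a simultaneous solution $x$, I would define $s(v) = \pr_v x$ for each variable $v$ and $s(e) = (\pr_v x)_{v\in V_e\setminus\gamma(e)} \in \shf{G}(e)$ for each equation $e$, and then verify the section conditions: for $v\ne\gamma(e)$ the restriction $\shf{G}(e\le v)$ is the projection picking out the $v$-coordinate, which yields $s(v)$; for $v = \gamma(e)$ the restriction is $f_e$, and $f_e(s(e)) = \pr_{\gamma(e)} x = s(\gamma(e))$ because $x$ solves $e$. The two assignments are visibly inverse to one another on the variable coordinates, and the computation in the forward direction shows that the equation coordinates of any section are forced by its variable coordinates; hence the correspondence is a bijection.

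I do not expect a genuine obstacle here: the argument is essentially the bookkeeping check that a section is determined by, and freely determined by, its values on the variables, with the value over each equation pinned down by $f_e$ applied to the values of the inputs of that equation. The one point that deserves a little care is making sure the notion of ``simultaneous solution'' being used matches the explicit setting --- an element of $\prod_{v\in V}W_v$ whose projection onto the coordinates of $V_e$ lies in the graph of $f_e$ for every $e$ --- after which matching it against the section conditions of $\shf{G}$ is immediate, and the degenerate cases (an equation with no free inputs, so $\shf{G}(e)$ is a one-point space, or a variable that is dependent for one equation and an input for another) are handled automatically by the same computation.
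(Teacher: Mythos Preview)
Your argument is correct; the bijection you write down is exactly the right one, and the checks are complete.

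The paper's proof takes a slightly different, more economical route. Rather than constructing the bijection between sections of $\shf{G}$ and simultaneous solutions from scratch, it observes that for an explicit equation $e$ the solution set $S_e$ appearing in the ordinary solution sheaf $\shf{E}$ is the graph of $f_e$:
\[
S_e=\{(v_1,\dotsc,v_n,f_e(v_1,\dotsc,v_n)) : v_i \in W_{\gamma(e_i)}\}.
\]
Projecting away the last coordinate gives a bijection $S_e \to \prod_{x\in V_e\setminus\gamma(e)} W_x = \shf{G}(e)$ that intertwines the restrictions of $\shf{E}$ and $\shf{G}$ (the projection onto $\gamma(e)$ in $\shf{E}$ becomes $f_e$ in $\shf{G}$, and the other projections match up). Hence $\shf{G}$ and $\shf{E}$ have the same sections, and Proposition~\ref{prop:simultaneous} finishes the job. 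Your approach unwinds this implicit sheaf isomorphism into an explicit section-level check; that buys you a self-contained argument independent of the earlier proposition, at the cost of repeating some of its bookkeeping.
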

\begin{proof}
  Suppose that $e$ is an equation in the explicit system of the form
  \begin{equation*}
      \gamma(e) = f_e( \gamma(e_1), \dotsc, \gamma(e_n)).
  \end{equation*}
It suffices to notice that $S_e=\{(v_1,\dotsc,v_n,f_e(v_1,\dotsc,v_n)): v_i \in W_{\gamma(e_i)}\}$ so the Proposition follows directly from Proposition \ref{prop:simultaneous}.
\end{proof}

\section{Ordinary differential equations}
\label{sec:ode}

The framework developed in the previous section works equally well for differential equations.  Differential equations give rise to sheaves of solutions \cite{Ehrenpreis_1956,Spencer_1969}, which admit various analytical techniques.  Consider the case of an autonomous ordinary differential equation given by
\begin{equation}
  \label{eq:ex:diff_1}
  u' = f(u),
\end{equation}
where $u \in C^1(\mathbb{R}, \mathbb{R}^d)$ is a continuously differentiable function.  We have essentially two options: to consider $u$ and $u'$ as two separate variables or to consider them as one variable.  Considering them as one variable amounts to rewriting \eqref{eq:ex:diff_1} as
\begin{equation*}
  0=F(u) = f(u) - \frac{d}{dt}u.
\end{equation*}
Then, the solutions of \eqref{eq:ex:diff_1} are sections of the sheaf given by the diagram
\begin{equation*}
  \xymatrix{
    C^1(\mathbb{R},\mathbb{R}^d)\\
    \{u : F(u) = 0\} \subseteq C^1(\mathbb{R},\mathbb{R}^d) \ar[u]^{\id} \\
  }
\end{equation*}
From an analytic standpoint, this kind of sheaf is not particularly helpful as too much of the structure of \eqref{eq:ex:diff_1} has been ``buried'' in the function $F$.

At first glance, considering $u$ and $u'$ as separate variables yields a similar construction, namely
\begin{equation*}
  \xymatrix{
    C^0(\mathbb{R},\mathbb{R}^d) & C^1(\mathbb{R},\mathbb{R}^d)\\
    C^1(\mathbb{R},\mathbb{R}^d)\ar[u]^f \ar[ur]^{\id}\\}
\end{equation*}
where $u'$ is on the top left and $u$ is on the top right.  Although solutions of \eqref{eq:ex:diff_1} are indeed sections of this sheaf, the converse is not true: there are many sections that are not solutions.  The issue is that there is another equation that links $u$ and $u'$, namely that they are related through differentiation.  Including this relationship leads to the sheaf $\shf{S}$
\begin{center}
  \includegraphics[width=1.5in]{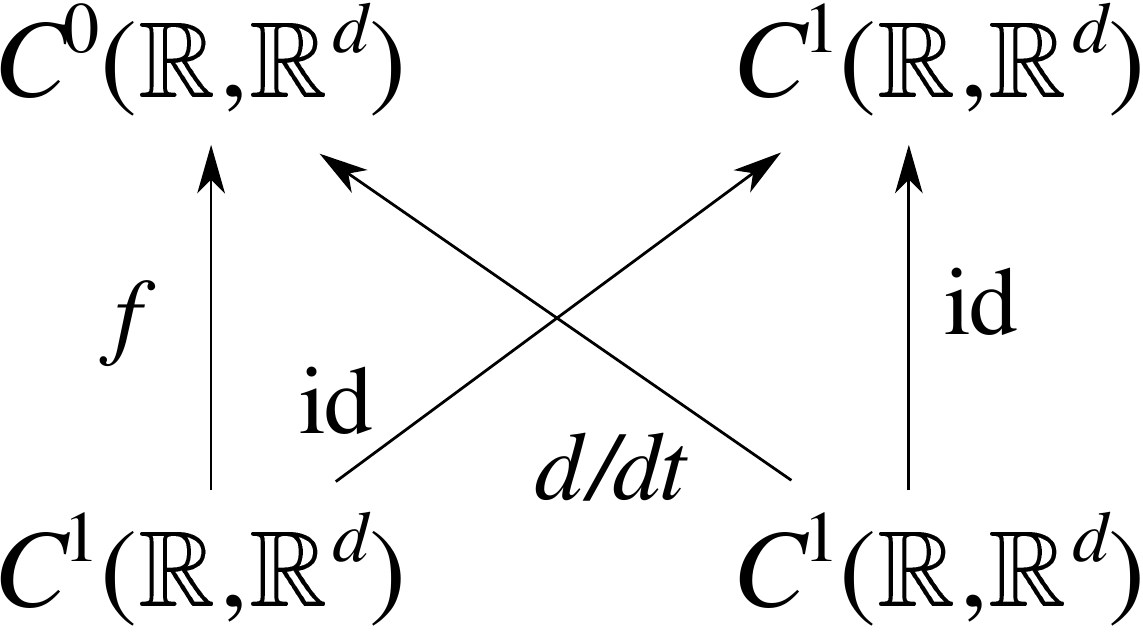}
\end{center}
whose sections are precisely solutions to the differential equation.

A benefit of formulating a differential equation as a sheaf is that it exposes a number of structural properties when we try to approximate it.  For instance, we can obtain consistency conditions for numerical methods.  Suppose that we wanted to discretize $u$ in finding our solution to \eqref{eq:ex:diff_1}.  If we had an actual solution $u$, this would merely be a function $\Delta_h: C^0(\mathbb{R},\mathbb{R}^d) \to (\mathbb{R}^d)^{\mathbb{Z}}$, taking functions to sequences, given by something like
\begin{equation*}
  (\Delta_h u)_n = u( hn )
\end{equation*}
for some step size $h>0$.  We would like to apply this discretization to every stalk in our sheaf, to obtain a sheaf morphism of the form given by the dashed lines in exactly the form posited in Section \ref{sec:discretization}
\begin{center}
  \includegraphics[width=3in]{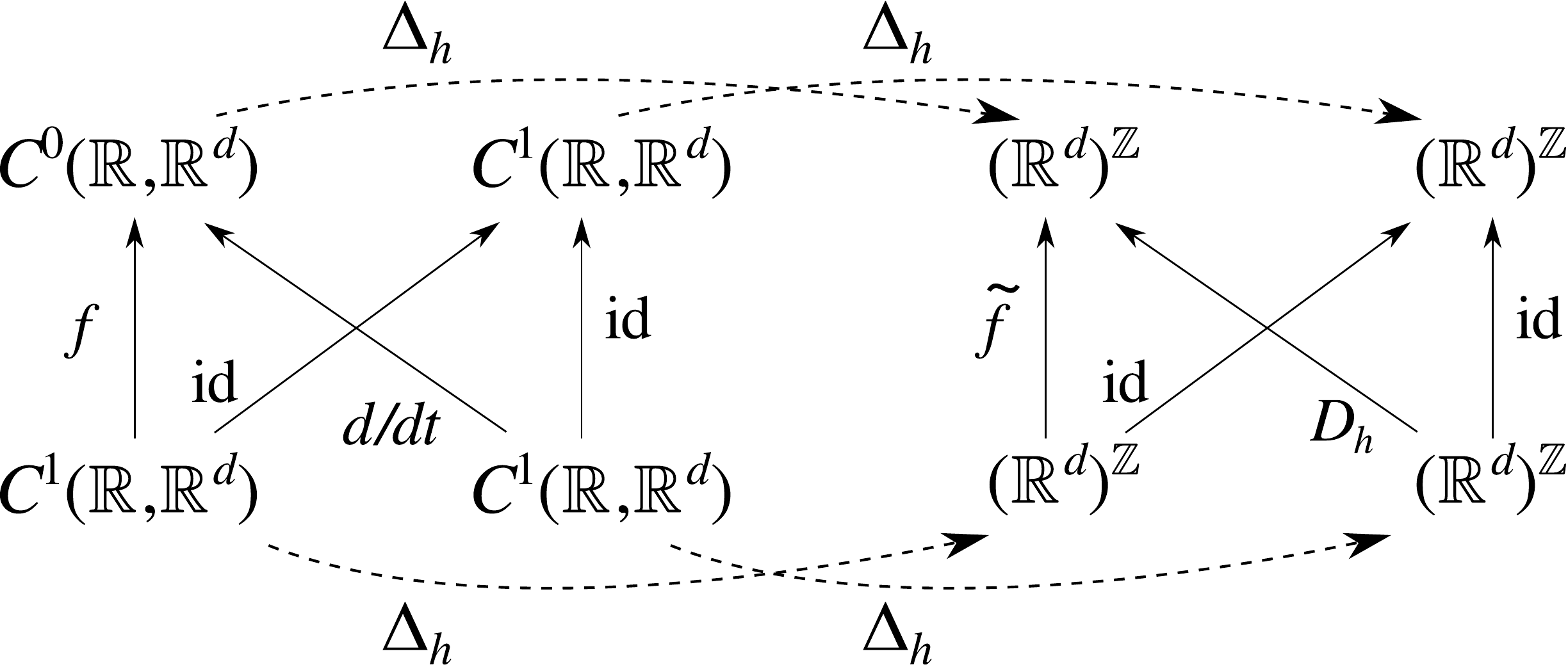}
\end{center}
in which $D_h$ is a discretized derivative and $\tilde{f}$ is a discretized version of $f$.  The sheaf morphism condition asserts that two compatibility conditions hold; the first one is straightforward, that
\begin{equation*}
  \tilde{f} \circ \Delta_h = \Delta_h \circ f,
\end{equation*}
which asserts a kind of translation invariance on the equation.  In particular, if $f$ is given by a function $\mathbb{R}^d \to \mathbb{R}^d$, then $\tilde{f}$ is given by
\begin{equation*}
  \tilde{f} u_n = \left(fu\right)(nh).
\end{equation*}
The other condition that
\begin{equation*}
  D_h \circ \Delta_h = \Delta_h \circ \frac{d}{dt}
\end{equation*}
is considerably more subtle.  Written in more simple notation, for a function $u$, this means that
\begin{equation}
  \label{eq:discrete_deriv}
  u'(nh) = D_h u_n.
\end{equation}
That is, the discretization $D_h$ of the derivative operator \emph{exactly} recovers the derivative.  Of course this is an unreasonable requirement, so we usually expect \eqref{eq:discrete_deriv} to hold only approximately!  There are various ways to manage this issue, which are discussed at length elsewhere.  The usual approach is to attempt to minimize the discretization error in some fashion, by trying to ensure that the operator norm  
\begin{equation*}
  \left\|D_h \circ \Delta_h - \Delta_h \circ \frac{d}{dt}\right\|
\end{equation*}
remains is small.  Although this method is often effective in ordinary differential equations, it can cause problems for partial differential equations.  We point the interested reader to the work of Arnold \cite{Arnold_2006,Arnold_2010} on finite exterior differential systems in which consistency equations like \eqref{eq:discrete_deriv} are enforced. 

Discretizing functions into sequences is formally convenient, but often it is useful to be a bit more explicit.  This is quite helpful when we generalize to partial differential equations in Section \ref{sec:pde}, since we will want to handle various irregular discretizations of the domain.  As discussed in Section \ref{sec:discretization}, discretization of the domain still amounts to a morphism out of the sheaf describing the differential equation and is usually related to an appropriate pullback.  Because of the need to describe the construction of $D_h$ more explicitly, the discretized sheaf must become somewhat more complicated.  It is not unreasonable to suppose that $D_h$ involves only finitely many terms when approximating a derivative.  Let us consider the case where $D_h$ is determined by $N$ terms
\begin{equation*}
  v_n = D_h(u_{n-i_1}, \dotsc, u_{n-i_N}).
\end{equation*}
This specifies a sequence of equations, which can be represented diagrammatically as
\begin{center}
  \includegraphics[width=2.5in]{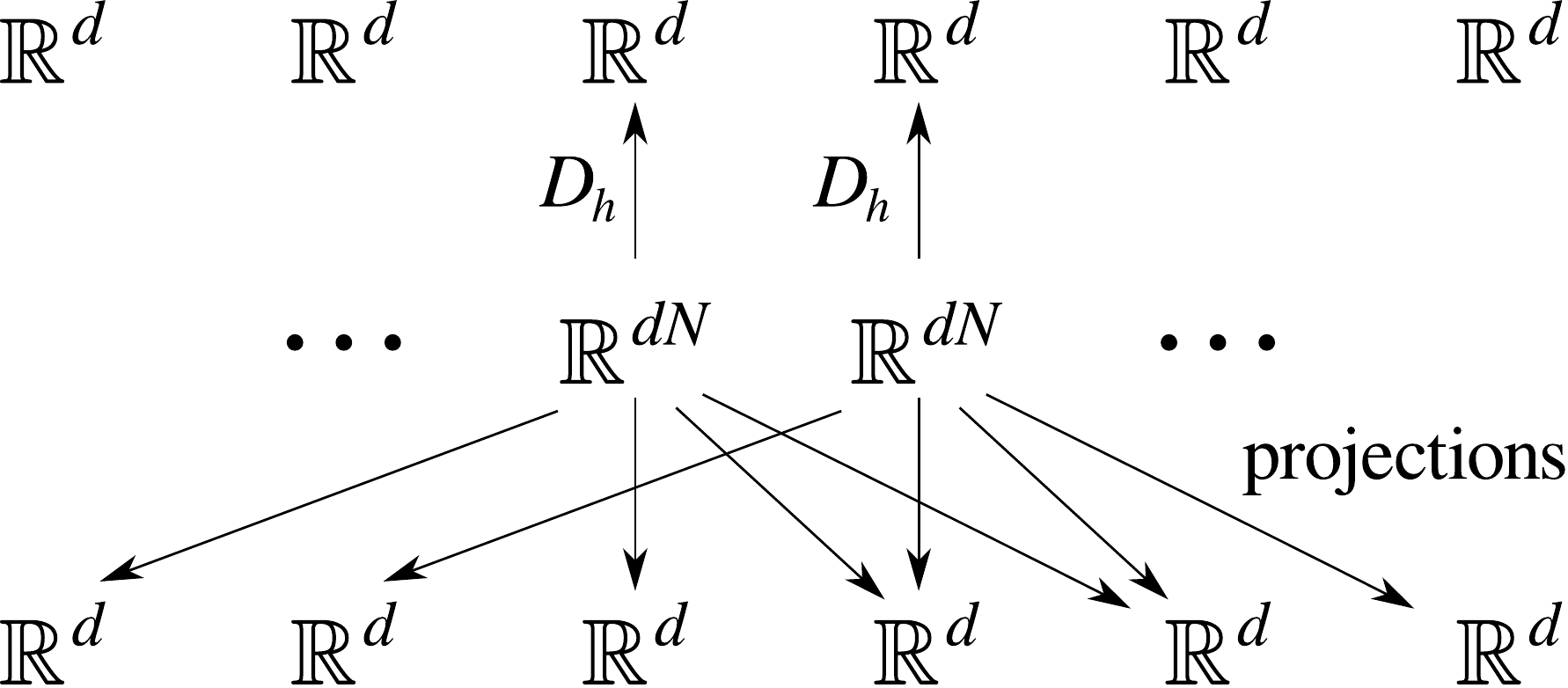}
\end{center}
in which the $v_n$ are given in the top row, while the $u_n$ are given in the bottom row.  Exactly the same kind of diagram is used to specify the new formulation of the function $\tilde{f}$, which in a similar way has its term-by-term dependencies called out explicitly.  Again, the appropriate consistency requirements for the discretization are encoded by a sheaf morphism from the sheaf of solutions of the differential equation.  As might be imagined, the resulting diagram is quite complicated. But for a single timestep, with $N=4$, and $f: \mathbb{R}^d\to\mathbb{R}^d$, the diagram is
\begin{center}
  \includegraphics[width=3.5in]{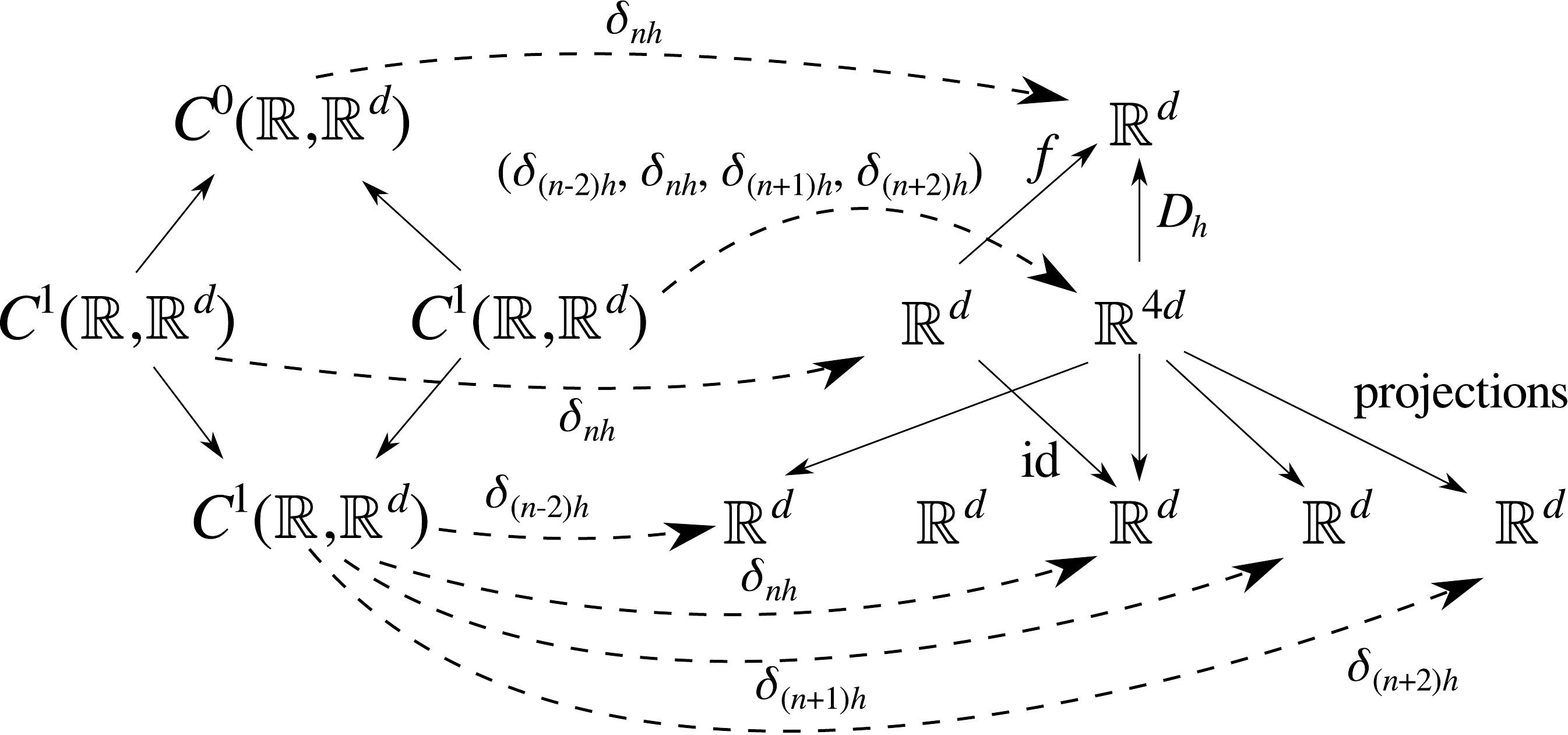}
\end{center}
where $\delta_{x}$ evaluates a function at $x$. 

A reverse way to look at the solutions of differential equations is instead to consider morphisms \emph{into} the sheaf encoding of the equation.  This provides a connection to finite element methods, as suggested in Section \ref{sec:discretization}.  We again use the same basic diagram.  Suppose that we have an $N$ dimensional subspace $B$ of $C^1(\mathbb{R},\mathbb{R}^d)$.  This can be interpreted as a linear function $b: \mathbb{R}^N \to B \subseteq C^1(\mathbb{R},\mathbb{R}^d)$.  Then, the appropriate morphism can be written
\begin{center}
  \includegraphics[width=2.5in]{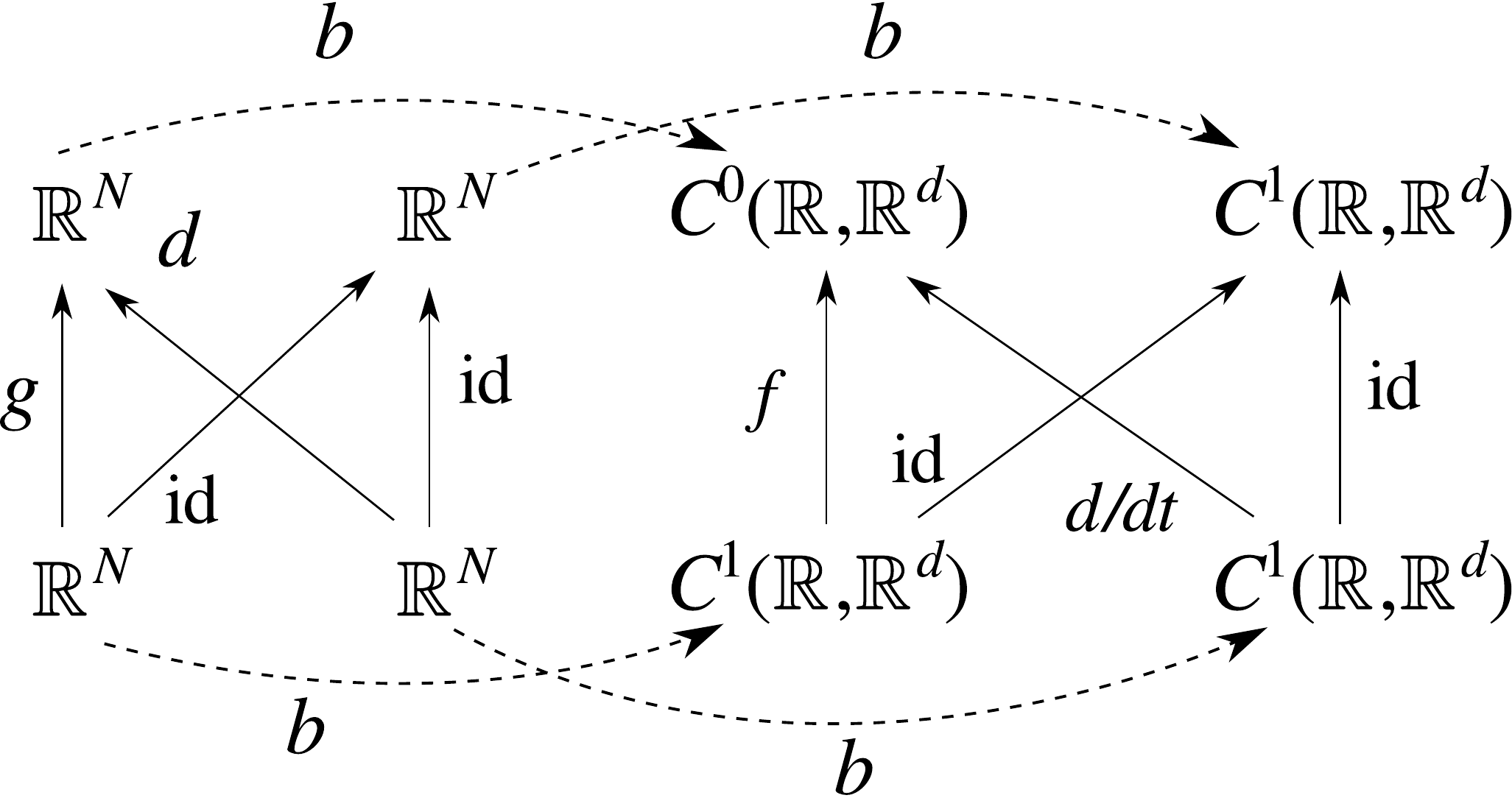}
\end{center}
As before, we can derive two consistency constraints from the commutativity of the diagram.  Unlike the previous case, the derivative constraint
\begin{equation*}
  b \circ d = \frac{d}{dt} \circ b 
\end{equation*}
is now easy to satisfy by choosing our subspace $B$ of $C^1(\mathbb{R},\mathbb{R}^d)$ so that it is invariant with respect to differentiation.  For instance, a basis of monomials $\{1,x,x^2, \dotsc\}$ or trigonometric functions works well enough.  The other constraint, that
\begin{equation*}
  b \circ g = f \circ b
\end{equation*}
is much harder to satisfy, though, because it requests that the subspace $B$ is invariant under $f$.  If $f$ is nonlinear, this is unlikely to be true!  Generally this problem must be handled by selecting $B$ and $g$ to minimize the difference between the two sides of the above equation.

\section{Partial differential equations}
\label{sec:pde}

Partial differential equations can be handled similarly to ordinary differential equations.  As before, the procedure is to list all relevant variables and equations, add appropriate auxiliary equations to relate functions to their (partial) derivatives, and then encode these as sheaves through a factor graph.  Suppose that we are interested in a system of partial differential equations on a manifold $M$.  The equations take the form
\begin{equation}
  \label{eq:pde_basic}
  0=f_i(u(x),\partial_{I_{i1}}u(x),\partial_{I_{i2}}u(x), \dotsc) \text{ for all }x\in M,
\end{equation}
where the $I_{ij}$ are multi-indices specifying the particular partial derivatives involved.  For each partial derivative $\partial_{I_{ij}}u$, we supply an auxiliary equation
\begin{equation*}
  v_{ij}(x) = \frac{\partial}{\partial x_{I_{ij}}} u(x)\text{ for all }x\in M, 
\end{equation*}
so that we can reinterpret the original equations as being in terms of $v_{ij}$.  Now given that $u$ and each $v_{ij}$ lies in a space of appropriately differentiable functions on $M$, we can simply follow the recipe in Section \ref{sec:simultaneous} to obtain a sheaf $\shf{S}$ whose global sections are the solutions to the system \eqref{eq:pde_basic}.

Recalling the discussion in Section \ref{sec:discretization}, discretization of \eqref{eq:pde_basic} involves specifying an appropriate cell decomposition of $M$ to which the sheaf of solutions can be moved.  There are essentially two ways to do this: (1) by looking at a cellular stratification of $M$ and (2) by looking at a topology on the cells of the stratification.  The first way leads to a finite differences model via a sheaf morphism, while the second leads to a finite elements model via a hybrid morphism.  

Let us see how one can construct the sheaf $\shf{S}$ and a discretization of it by way of an example.

\begin{example}
  Consider the case of the Helmholtz equation
  \begin{equation*}
    \Delta u + k^2 u = 0
  \end{equation*}
  on a Riemannian manifold $M$.  Following the recipe in Section \ref{sec:simultaneous}, we obtain a sheaf given by the diagram
  \begin{equation*}
    \xymatrix{
      C^\infty(M,\mathbb{R}) & C^\infty(M,\mathbb{R}) \\
      S\subseteq C^\infty(M,\mathbb{R}) \times C^\infty(M,\mathbb{R}) \ar[u]^{\pr_1} \ar[ur]^(0.25){\pr_2} & C^\infty(M,\mathbb{R})\ar[ul]_(0.25){\Delta} \ar[u]^{\id}\\
      }
  \end{equation*}
  in which the set $S$ is given by
  \begin{equation*}
    S = \{(u,v) \in C^\infty(M,\mathbb{R}) \times C^\infty(M,\mathbb{R}) : u(x) + k^2 v(x) = 0\text{ for all }x\in M\}.
  \end{equation*}
  Realizing that we can collapse several stalks of the sheaf without disrupting the space of global sections, we obtain a somewhat less redundant formulation:
  \begin{equation*}
    \xymatrix{
      C^\infty(M,\mathbb{R})&\\
      &C^\infty(M,\mathbb{R}) \ar[ul]^{\Delta}\\
      S\subseteq C^\infty(M,\mathbb{R}) \times C^\infty(M,\mathbb{R}) \ar[uu]^{\pr_1} \ar[ur]^{\pr_2}
      }
  \end{equation*}

  Following the construction of the pullback in Section \ref{sec:discretization}, let us choose an open cover $\col{U}$ of $M$ and pull back each $C^\infty(M,\mathbb{R})$ to this cover.  Assuming that $n$ is the dimension of $M$, and that $x \in U \in \col{U}$, this results in a diagram like the following
  \begin{equation*}
\includegraphics[width=4.5in]{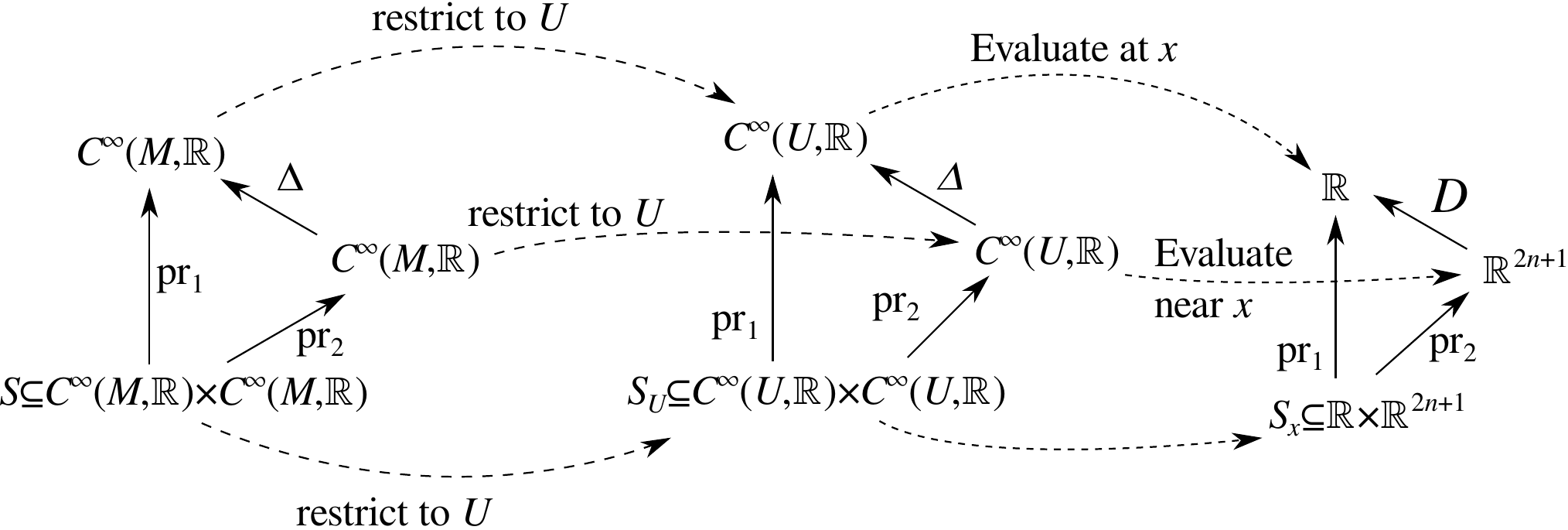}
  \end{equation*}
for each $U \in \col{U}$.  In the diagram, the fully continuous solutions are global sections of the sheaf on the left, and fully discretized solutions appear on the right.  The arrow labeled ``Evaluate near $x$'' takes a smooth function $f:U \subseteq \mathbb{R}^n \to \mathbb{R}$ to the following vector
\begin{equation*}
\left(f(x), f(x+e_1), f(x-e_1), \dotsc, f(x+e_n),f(x-e_n) \right),
\end{equation*}
where $e_i$ is the $i$-th coordinate vector.  Given this information, the arrow labeled $D$ computes the following discrete approximation to the Laplacian
\begin{equation*}
D(a_0,a_1,\dotsc,a_{2n}) = \frac{1}{2n}\left(\sum_{i=1}^{2n} a_i\right) - a_0.
\end{equation*}
Finally, the set $S_x$ of local solutions to the discretized problem near $x$ is given by
\begin{equation*}
S_x = \{(u,v_0,v_1,\dotsc,v_{2n}) \in \mathbb{R}^{2n+2} : u + k^2 v_0 = 0\}.
\end{equation*}
Notice that even though only $v_0$ and $u$ appear in the specification of $S_x$, the other $v_i$ are constrained in the global sections of the sheaf of discretized solutions.
\end{example}

\begin{example}
Consider the case of a nonlinear heat equation with a heat source, specified by
\begin{equation}
\label{eq:nonlinear_heat}
\frac{\partial}{\partial t} u(x,t) - \Delta u(x,t) + K u^2(x,t) = f(x,t).
\end{equation}
where $x\in M$ is a point in an $n$-dimensional manifold, $t\in \mathbb{R}$.  We have several options for treating the nonlinearity -- either it can be encapsulated into the solution space, or it can be broken out as another variable.  Breaking it out as another variable has the advantage that the nonlinearity is then encoded as a restriction map in the sheaf, which makes later analysis a little easier.  Therefore, we can rewrite \eqref{eq:nonlinear_heat} as the following explicit system
\begin{eqnarray*}
f(x,t) & = & T(x,t) - L(x,t) + K V(x,t), \\
V(x,t) &=& u^2(x,t), \\ 
T(x,t) & = & \frac{\partial}{\partial t} u(x,t), \\
L(x,t) & = & \Delta u(x,t). \\
\end{eqnarray*}
The diagram for the resulting explicit solution sheaf $\shf{H}$ is
\begin{equation*}
\includegraphics[width=3.5in]{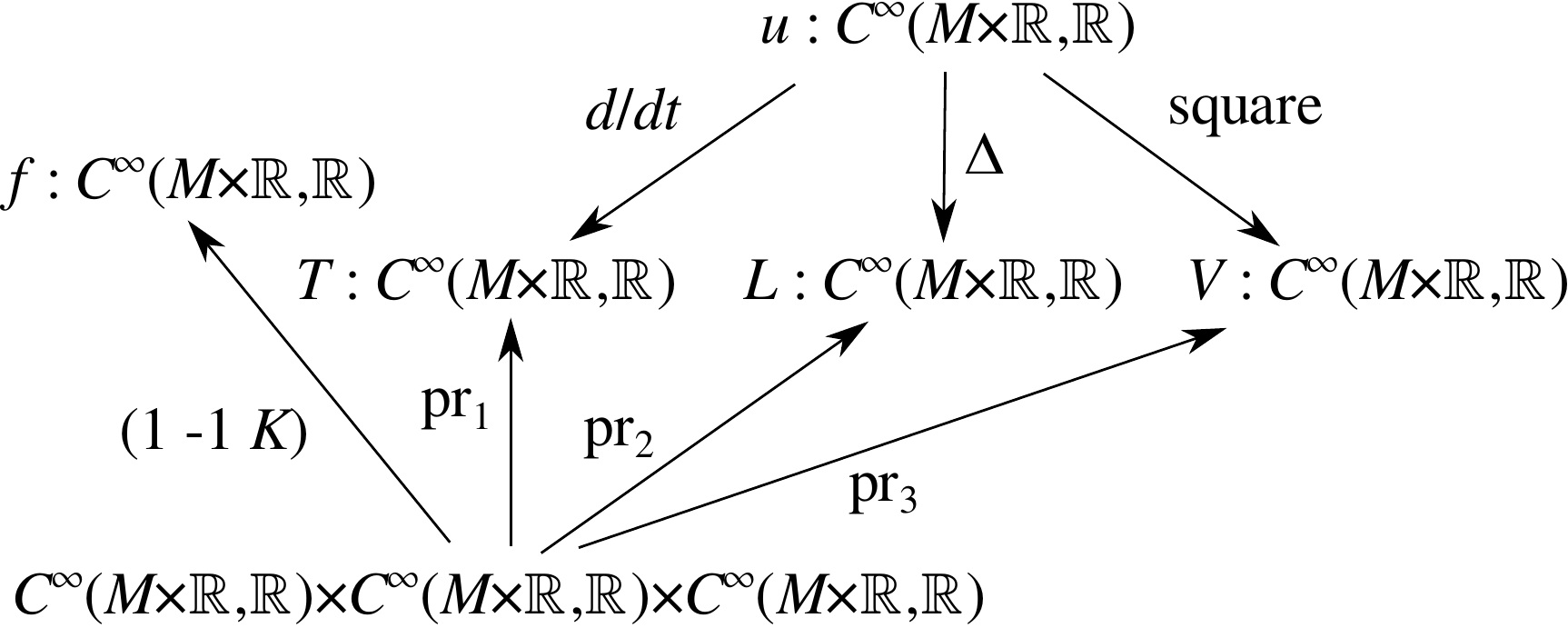}
\end{equation*}
in which the arrow labeled ``square'' represents the function taking $u$ to $u^2$.

On each open $U \subseteq M$ and $(a,b) \subseteq \mathbb{R}$, we can follow Section \ref{sec:discretization} to construct a hybrid morphism that takes
\begin{equation*}
  \dshf{C}_c(M\times \mathbb{R},\mathbb{R})(U\times (a,b)) = C_c(U\times (a,b), \mathbb{R}) \to C(U\times (a,b), \mathbb{R})
\end{equation*}
in each stalk in the diagram above.  Since the nonlinear squaring map is indeed a function $C_c(U\times (a,b), \mathbb{R}) \to C_c(U\times (a,b), \mathbb{R})$, we can indeed construct the hybrid morphism. 

Although the approach of discretizing using compactly supported smooth functions is attractive, there is a distinct problem: the solutions to \eqref{eq:nonlinear_heat} are typically not localized in space $x$ or in time $t$.  This means that trying to approximate solutions using localized functions is bound to cause numerical issues.  A safer approach is instead to use a dual sheaf $\dshf{S}_k$ of degree $k$ splines: although they have local control, they can be extended.  Specifically, let $\col{U}$ be a cover of $M$ consisting of open sets with compact closures, each of which is homeomorphic to an open set in $\mathbb{R}^n$.  Construct a partial order\footnote{This partial order is the 1-skeleton of the nerve of $\col{U}$.} formed by elements of $\col{U}$ and their pairwise intersections, with $U \le (U \cap V)$ and the dual sheaf $\dshf{S}_k$ by
\begin{enumerate}
\item Each stalk $\dshf{S}_k(U)$ is the vector space of degree $k$ polynomials in $n$ variables for $U \in \col{U}$,
\item Each stalk $\dshf{S}_k(U \cap V)$ is the vector space of degree $k-1$ polynomials in $n$ variables for $U,V \in \col{U}$,
\item Each extension $\dshf{S}_k( (U \cap V) \to U)$ is the composition of the transition map $(U \cap V) \to U$ in $M$ with the projection from degree $k$ polynomials to degree $k-1$ polynomials.  
\end{enumerate}
The benefit with this construction is that each stalk (of both types, $U$ and $U \cap V$) is directly mapped to a space of continuous functions, so there is still a hybrid morphism $\dshf{S}_k \to \shf{H}$, but compact support is not required.
\end{example}

\begin{example}
  The sheaf $\shf{H}$ in the previous example has a nonlinear restriction map.  It is quite evident how to linearize this restriction map about a local section (see Definition \ref{def:linearized}) -- simply replace the square with the $2$ times the value of the section.
\end{example}

\section{Multi-model systems of differential equations}
\label{sec:multidiffeqn}

In the previous sections, we considered differential equations in the usual sense -- on manifolds.  What happens on stratified manifolds?  If the model is originally formulated on the entire space, then it descends to models on each stratification.  Those models are not independent, but have relationships among them.  Conversely -- and more usefully -- if one starts with models on each stratification and defines various boundary conditions, then a global model can be assembled.  In order to translate models on different portions of a space, we transform sheaves along order-preserving functions using pullbacks (Definition \ref{df:pullback}).

Let us now consider the case where we have a known model on a topological space $X$ encoded as a sheaf $\shf{S}$.  Let $\{X_i\}$ be a finite collection of closed subspaces of $X$ whose union is $X$.   Consider the intersection lattice of $\{X_i\}$: the poset $P$ whose elements are all possible intersections and unions of $X_i$, and the partial order is the subset relation.  (See Figure \ref{fig:localizing}.)  Given the sheaf $\shf{S}$, we can pull back to a sheaf $\shf{S}_i$ on $X_i$ along the inclusion $X_i \to X$.

Given this formal construction, pulling back $\shf{S}$ to each element of $P$ yields a dual sheaf $\dshf{S}$ \emph{of sheaves} on $P$:
\begin{enumerate}
\item For each element $A$ of $P$, $\dshf{S}(A)=i_A^* \shf{S}$, which is the pullback of $\shf{S}$ to $A$ along the inclusion $i_A: A \to X$
\item Since pulling back is a contravariant functor by Lemma \ref{lem:pushpull}, each pair of elements $A,B \in P$ with $A \le B$ has a sheaf morphism induced $\dshf{S}(B) \to \dshf{S}(A)$.  This defines the extension map $\dshf{S}(A \le B)$.
\end{enumerate}

\begin{figure}
\begin{center}
\includegraphics[width=4.5in]{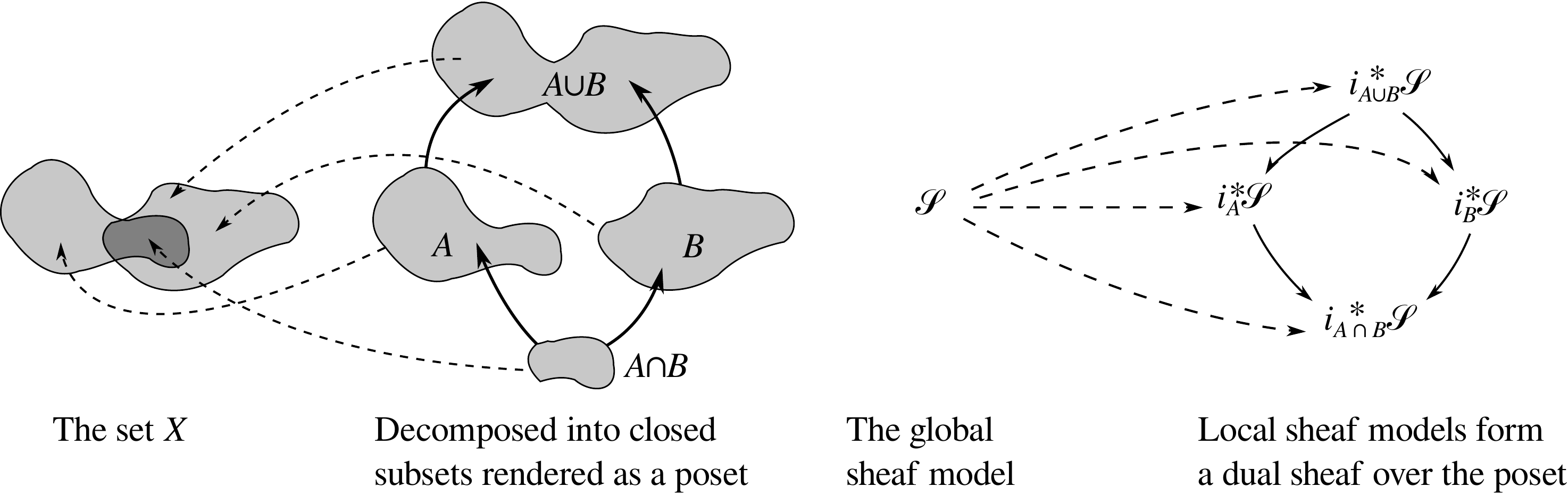}
\caption{Localizing a sheaf model on a space $X$ to subspaces}
\label{fig:localizing}
\end{center}
\end{figure}

The interpretation of $\dshf{S}$ as a dual sheaf supports the intuition that $\shf{S}_i$ represents the solutions to a given model on the interior of a stratum $X_i$, and the solutions are \emph{extended} to the boundary where they may interact with other strata.

From an analysis perspective, we generally want the solutions, proper.  Since the extensions in this dual sheaf $\dshf{S}$ are sheaf morphisms, they induce maps on global sections of each of the pullback sheaves.  Computing global sections on each stratum yields a dual sheaf of sets $\dshf{S}'$ on $P$ given by
\begin{enumerate}
\item For each $A$ in $P$, $\dshf{S}'(A)$ is the set of global sections of the sheaf $\dshf{S}(A)$,
\item For each pair of elements $A \le B$ in $P$, $\dshf{S}'(A \le B)$ is the map induced by Proposition \ref{prop:morphism} on global sections by the extension map $\dshf{S}(A \le B)$, which is itself a dual sheaf morphism.
\end{enumerate}

\begin{proposition}
  The global sections of the dual sheaf $\dshf{S}'$ are precisely the global sections of the original sheaf $\shf{S}$.
\end{proposition}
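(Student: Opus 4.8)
The plan is to produce a canonical bijection between the global sections of $\shf{S}$ and the global sections of $\dshf{S}'$, realized by evaluating a section of $\dshf{S}'$ at the maximal element of $P$. Two structural facts drive everything. First, $P$ has a greatest element: since $X = \bigcup_i X_i$ is itself a union of the $X_i$ it lies in $P$, and every other element of $P$, being built from intersections and unions of subsets of $X$, is contained in $X$; hence $A \le X$ for every $A \in P$. Second, at this top element $\dshf{S}'$ recovers $\shf{S}$: the inclusion $i_X : X \to X$ is the identity map, so by Definition \ref{df:pullback} (equivalently, by Lemma \ref{lem:pushpull} with $f=g=\id$) the pullback $\dshf{S}(X) = i_X^*\shf{S}$ is $\shf{S}$ itself and the associated sheaf morphism is the identity; therefore $\dshf{S}'(X)$ is exactly the set of global sections of $\shf{S}$, and $\dshf{S}'(X \le X) = \id$.

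Next I would write down the two candidate maps. Let $\Phi$ send a global section $c$ of $\dshf{S}'$ to $c(X)$, which is a global section of $\shf{S}$ by the observation above. Let $\Psi$ send a global section $s$ of $\shf{S}$ to the assignment $A \mapsto \dshf{S}'(A \le X)(s)$ -- that is, $s$ restricted (pulled back) to each subspace $A$. The one point that needs care is checking that $\Psi(s)$ really is a global section of $\dshf{S}'$, i.e.\ that restricting $s$ to the strata and their intersections yields a \emph{compatible} family. For this I would use the composition axiom for the dual sheaf $\dshf{S}'$ (the dual form of Definition \ref{df:sheaf_poset}, condition (3)): for $A \le B \le X$ it gives $\dshf{S}'(A \le B) \circ \dshf{S}'(B \le X) = \dshf{S}'(A \le X)$, an identity inherited from the contravariant functoriality of pullback (Lemma \ref{lem:pushpull}) via Proposition \ref{prop:morphism}, which carries composites of sheaf morphisms to composites of the induced maps on global sections. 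Hence $\dshf{S}'(A \le B)\big(\Psi(s)(B)\big) = \dshf{S}'(A \le X)(s) = \Psi(s)(A)$, which is precisely the dual-section condition of Definition \ref{df:section}.

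Finally I would verify $\Phi$ and $\Psi$ are mutually inverse. One composite is immediate: $\Phi(\Psi(s)) = \Psi(s)(X) = \dshf{S}'(X \le X)(s) = s$ because $\dshf{S}'(X \le X) = \id$. For the other, let $c$ be a global section of $\dshf{S}'$ and $A \in P$ arbitrary; applying the dual-section condition for $c$ to the relation $A \le X$ gives $c(A) = \dshf{S}'(A \le X)(c(X)) = \Psi\big(\Phi(c)\big)(A)$, so $\Psi(\Phi(c)) = c$. This establishes the bijection, and unwinding the definitions shows it simply identifies a global solution of the model on $X$ with the compatible family of its restrictions to all of the strata and their mutual intersections.

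I do not anticipate a genuine obstacle: the argument rests on the rigidity of dual sections -- a section of a dual sheaf is pinned down by its value at the top of the poset -- together with the fact that $X$ itself occupies the top of $P$ with $\dshf{S}'(X)$ equal to the global sections of $\shf{S}$. The only step deserving a careful word is the well-definedness of $\Psi$, and that is exactly what the functoriality packaged into Proposition \ref{prop:morphism} and the dual-sheaf axioms for $\dshf{S}'$ supply.
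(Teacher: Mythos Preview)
Your argument is correct and follows the same strategy as the paper: send a global section of $\dshf{S}'$ to its value at the top element $X\in P$ (where $\dshf{S}'(X)$ is the set of global sections of $\shf{S}$), and conversely restrict a global section of $\shf{S}$ along each inclusion $A\hookrightarrow X$ using Proposition~\ref{prop:morphism}. The paper's proof states these two directions tersely and leaves the inverse check implicit; your version spells out that $\Phi$ and $\Psi$ are mutual inverses via the dual-sheaf transitivity axiom, which is a welcome bit of extra care but not a different idea.
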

\begin{proof}
  First, every global section of $\shf{S}$ is taken to a global section in each stalk of $\dshf{S}$, which is an element of $\dshf{S}'$ since the extensions are sheaf morphisms, by Proposition \ref{prop:morphism}.  On the other hand, each global section of $\dshf{S}'$ takes a value at $X$, which is by definition a global section of $\shf{S}$.
\end{proof}

Since $\dshf{S}$ is a dual sheaf \emph{of sheaves}, its space of sections ought to itself be a sheaf!  As the next Proposition indicates, that sheaf is $\shf{S}$.  (From the perspective of category theory, we are merely verifying that pulling back $\shf{S}$ to $P$ produces a diagram with $\shf{S}$ as its limit.)

\begin{proposition}
\label{prop:universality}
For each $A \in P$, there is a sheaf morphism $m_A: \shf{S} \to \dshf{S}(A)$ so that $m_A$ \emph{commutes with the extensions} of $\dshf{S}$, namely $m_A = \dshf{S}(A \le B) \circ m_B$ for every pair of elements $A \le B$ in $P$.  

For any other sheaf $\shf{T}$ that also has morphisms $n_A: \shf{T} \to \dshf{S}(A)$ that commute with the extensions of $\dshf{S}$, there is a sheaf morphism $t: \shf{T} \to \shf{S}$ such that $n_A = m_A \circ t$ for all $A$ in $P$.
\end{proposition}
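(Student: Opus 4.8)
The plan is to observe that the ambient space $X = \bigcup_i X_i$ is itself an element of $P$ --- indeed its \emph{maximum} element, since every $X_i$, and hence every intersection or union of the $X_i$, is contained in $X$ --- and to exploit that $\dshf{S}(X) = \shf{S}$. Once this is noticed, a family of morphisms into the diagram $\dshf{S}$ that commutes with its extensions amounts to the same data as a single morphism into $\shf{S}$, and the proposition follows almost formally. First I would pin down the morphisms $m_A$: writing $i_A : A \to X$ for the inclusion, we have $\dshf{S}(A) = i_A^*\shf{S}$ by construction, and Definition \ref{df:pullback} supplies a canonical surjective sheaf morphism $\shf{S} \to i_A^*\shf{S}$ whose component maps are all identities; take $m_A$ to be this morphism. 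Since $i_X = \id_X$, this gives $\dshf{S}(X) = \shf{S}$ and $m_X = \id_{\shf{S}}$.

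Next I would verify the cone condition $m_A = \dshf{S}(A \le B) \circ m_B$ whenever $A \le B$ in $P$. The key point is that, by the way $\dshf{S}$ was assembled, the extension $\dshf{S}(A \le B)$ is exactly the pullback morphism $i_B^*\shf{S} \to j^*(i_B^*\shf{S})$ associated with the inclusion $j : A \to B$, where Lemma \ref{lem:pushpull} identifies $j^*(i_B^*\shf{S})$ with $(i_B \circ j)^*\shf{S} = i_A^*\shf{S} = \dshf{S}(A)$. Pullback morphisms have identity components, and a composite of sheaf morphisms with identity components again has identity components; hence both $m_A$ and $\dshf{S}(A \le B) \circ m_B$ are sheaf morphisms $\shf{S} \to \dshf{S}(A)$ along $i_A = i_B \circ j$ with identity components, and therefore they agree. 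Specializing $B = X$ records the identity $\dshf{S}(A \le X) = m_A$, which is what makes the universal property work.

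For that universal property, given any sheaf $\shf{T}$ with morphisms $n_A : \shf{T} \to \dshf{S}(A)$ commuting with the extensions of $\dshf{S}$, I would simply set $t := n_X : \shf{T} \to \dshf{S}(X) = \shf{S}$. For an arbitrary $A \in P$ we have $A \le X$, so the hypothesis that the $n_A$ commute with the extensions yields $n_A = \dshf{S}(A \le X) \circ n_X = m_A \circ t$ using the identity from the previous step. Uniqueness is then immediate (giving the full limit property promised in the remark): any $t'$ with $n_A = m_A \circ t'$ for all $A$ satisfies, at $A = X$, $t' = m_X \circ t' = n_X = t$. The argument goes through verbatim even if $\shf{T}$ lives over a base other than $X$, since the compatibility of the $n_A$ forces the underlying maps of spaces to factor through the inclusions $i_A$.

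I do not anticipate a genuine obstacle: the substance is the single observation that $P$ has a top element equal to $X$, together with the bookkeeping fact that pullback morphisms have identity components. The step most exposed to error is the commutation identity $m_A = \dshf{S}(A \le B) \circ m_B$, where one must correctly recognize the extension map of $\dshf{S}$ as an iterated pullback morphism and invoke the functoriality of Lemma \ref{lem:pushpull}; one must also keep the variances straight throughout, since sheaf morphisms run opposite to the underlying maps of spaces, so composites are to be tracked along $i_A = i_B \circ j$ and not the reverse.
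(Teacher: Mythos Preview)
Your argument is correct, and the first half (defining $m_A$ as the pullback morphism and checking the cone condition via Lemma \ref{lem:pushpull}) coincides with the paper's proof essentially word for word.

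For the universal property you take a genuine shortcut relative to the paper. You exploit that $X$ itself lies in $P$ as the top element, so that $\dshf{S}(X)=\shf{S}$ and you may simply set $t:=n_X$; the factorization $n_A = m_A \circ t$ then drops out of the cone condition at $A\le X$. The paper instead proceeds as if $X$ were not available in $P$: given $\shf{T}$ a sheaf on some poset $Y$ with cone maps $n_A$ along order preserving maps $g_A:A\to Y$, it patches the $g_A$ into a single $f:X\to Y$ (using that compatibility of the $n_A$ forces compatibility of the $g_A$) and then assembles $t$ componentwise from the $n_{A,x}$. Of course the two agree, since your $t=n_X$ lies along $g_X$, and the paper's $f$ satisfies $f(x)=g_A(x)=g_X(x)$ for any $A\ni x$. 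What your approach buys is brevity and transparency for this proposition; what the paper's approach buys is that it does not rely on $P$ having a top element, and indeed the paper explicitly reuses this patching argument verbatim in the proof of Theorem \ref{thm:reconstruct}, where one starts from $\dshf{S}$ on a poset with no pre-given $\shf{S}$ sitting at the top.
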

\begin{proof}
First observe that since the stalk $\dshf{S}(A)$ on $A$ is the pullback $i_A^* \shf{S}$ of $\shf{S}$ along the inclusion map $i_A: A \to X$, we should define $m_A = i_A^* : \shf{S} \to i_A^* \shf{S}$ as given by Definition \ref{df:pullback}.  For the extension maps, suppose that $A \le B \le X$ in $P$.  This can be thought of as a sequence of inclusions $A \to B \to X$, each of which is an order-preserving map.  So by Lemma \ref{lem:pushpull}, these induce sheaf morphisms going the opposite direction, which by the definition of the extension maps of $\dshf{S}$ is precisely $m_A = \dshf{S}(A \le B) \circ m_B$.

Now suppose that $\shf{T}$ is any other sheaf on a poset $Y$ with morphisms $n_A: \shf{T} \to \dshf{S}(A)$ along order preserving maps $g_A: A \to Y$ commuting with the extensions of $\dshf{S}$.  Suppose that $x \in A$, so that there is a map $n_{A,x} : \shf{T}(g_A(x)) \to \dshf{S}(A)(x)$.  We must perform two constructions: we must construct an order preserving map $f: X \to Y$ and the morphism $t: \shf{T} \to \shf{S}$ along $f$.

\begin{description}
\item[Constructing $f$:] Suppose $x\in X$, which is in at least one element of $P$, say $A$.  Observe that because the $n_A$ commute with the extensions of $\dshf{S}$, it must be the case that the $g_A$ maps commute with the inclusions.  Therefore, we can define $f(x) = g_A(x)$, because if $x$ is also in $B$, $g_{A\cap B} = g_A \circ i_{A\cap B\to A}$ where $i_{A\cap B \to A}: A\cap B \to A$ is the inclusion.
\item[Constructing $t$:]  Suppose that $x \in A$, so that the component of the morphism $n_A$ is the map $n_{A,x} : \shf{T}(g_A(x)) \to \dshf{S}(A)(x)$.  However, $\dshf{S}(A)(x) = i_A^* \shf{S}(x) = \shf{S}(x)$ where $i_A : A \to X$ is the inclusion.  We merely need to note that $f(x) = g_A(x)$ to complete the construction.
\end{description}

\end{proof}

These propositions indicate that disassembling the model encoded in $\shf{S}$ into a dual sheaf built on the intersection lattice of some subsets is a faithful representation of the model.  It points the way for the reverse construction, when one doesn't have a sheaf $\shf{S}$ on $X$ to start.  To formulate a collection of interrelated models, one need only build such a dual sheaf of sheaves $\dshf{S}$ (on $P \backslash X$) from the outset and then examine $\dshf{S}'$ to find its solution.

\begin{theorem}
\label{thm:reconstruct}
  Given a dual sheaf $\dshf{S}$ of sheaves of sets or vector spaces on the poset $P$, one can construct a sheaf $\shf{S}$ and a set of sheaf morphisms $m_A: \shf{S} \to \dshf{S}(A)$ for each $A \in P$ such that
  \begin{enumerate}
  \item $m_A = \dshf{S}(X_A \le X_B) \circ m_B$ for each $A \le B \in P$ and
  \item If $\shf{R}$ is any other sheaf with this property, then there is a sheaf morphism $r : \shf{R} \to \shf{S}$ that commutes with all the $m_A$ and extensions of $\dshf{S}$.
  \end{enumerate}
\end{theorem}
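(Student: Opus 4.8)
The plan is to build $\shf{S}$ as the \emph{limit} of the diagram of sheaves indexed by $P$, realized concretely as a sheaf of ``compatible families of sections.'' Since all the sheaves $\dshf{S}(A)$ live on different posets, the first task is to collect them on a single base. I would take $X$ to be the colimit (union) of the underlying posets of the $\dshf{S}(A)$: concretely, $X = \bigsqcup_A \dshf{S}(A)\text{'s poset}$ modulo the identifications forced by the extension morphisms' underlying order-preserving maps. Each extension $\dshf{S}(A \le B)$ is a sheaf morphism $\dshf{S}(B) \to \dshf{S}(A)$ along some order-preserving $h_{AB}$ going the other way; these $h_{AB}$ assemble into maps into $X$, and over a point $x \in X$ coming from $A$ one sets $\shf{S}(x)$ to be the subset of $\prod_{B} \dshf{S}(B)(\cdot)$ — the product over all $B$ with $x$ (via the structure maps) lying in $B$ — consisting of tuples that agree under every extension map $\dshf{S}(A' \le B')$. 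The restriction maps of $\shf{S}$ are induced componentwise from the restrictions of the individual $\dshf{S}(B)$, which is legitimate because each $\dshf{S}(A \le B)$ is a sheaf morphism and hence commutes with restrictions.

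Next I would define the morphisms $m_A : \shf{S} \to \dshf{S}(A)$ as the obvious projection: a compatible tuple over $x$ is sent to its $A$-component. Property (1), $m_A = \dshf{S}(A \le B) \circ m_B$, is then immediate from the defining compatibility of the tuples — that is exactly the relation a tuple is required to satisfy. Checking that $m_A$ is a genuine sheaf morphism (the square in Definition \ref{df:morphism} commutes) again reduces to the fact that restrictions in $\shf{S}$ are defined componentwise and each $\dshf{S}(A \le B)$ respects restrictions.

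For the universal property (2), suppose $\shf{R}$ is a sheaf on some poset $Y$ with morphisms $n_A : \shf{R} \to \dshf{S}(A)$ along order-preserving maps $g_A$, commuting with the extensions of $\dshf{S}$. The commutation of the $n_A$ with extensions forces the $g_A$ to be compatible with the underlying order-preserving maps $h_{AB}$, so they glue to a single order-preserving map $g : Y \to X$ (this is the argument already used in the proof of Proposition \ref{prop:universality}, Constructing $f$). Then for $y \in Y$, the components $n_{A,y} : \shf{R}(g_A(y)) \to \dshf{S}(A)(g_A(y))$, as $A$ ranges over those with $g(y)$ landing in $A$, assemble — precisely because the $n_A$ commute with extensions — into a compatible tuple, i.e. an element of $\shf{S}(g(y))$. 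This defines $r_y : \shf{R}(g(y)) \to \shf{S}(g(y))$, hence a morphism $r : \shf{R} \to \shf{S}$ along $g$. Commutativity $n_A = m_A \circ r$ holds componentwise by construction, since $m_A$ is the projection onto the $A$-component and $r$ was built to have that $A$-component equal to $n_{A}$. Uniqueness of $r$ follows because the $m_A$ are jointly monic: a tuple is determined by all its components.

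The main obstacle I anticipate is purely bookkeeping rather than conceptual: making the construction of the base poset $X$ and the indexing of the product $\prod_B \dshf{S}(B)(\cdot)$ precise when the $\dshf{S}(A)$ are sheaves on \emph{different} posets connected by morphisms that move points around via the $h_{AB}$. One must verify that ``the components of a compatible tuple over $x$'' is a well-defined notion independent of which $A$ one uses to witness $x$, and that transitivity of the extension maps (condition (3) for the dual sheaf $\dshf{S}$) makes the compatibility conditions consistent rather than overdetermined. Once the indexing conventions are pinned down, every verification is a diagram chase of the same flavor as Proposition \ref{prop:morphism} and Proposition \ref{prop:universality}, and in the vector-space case one simply observes that subspaces cut out by linear compatibility conditions are again vector spaces, so the whole construction stays within the category of sheaves of vector spaces.
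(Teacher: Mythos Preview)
Your proposal is correct and follows essentially the same route as the paper: both construct the base poset $X$ as the quotient of the disjoint union of the underlying posets by the identifications coming from the extension morphisms, define $\shf{S}(x)$ as the set of compatible families (what the paper phrases as ``global sections of the dual sheaf $\dshf{S}_x$'' obtained by evaluating every $\dshf{S}(A)$ at $x$), take $m_A$ to be the projection, and then invoke the argument of Proposition~\ref{prop:universality} for the universal property. The only cosmetic difference is that the paper packages your ``compatible tuple'' condition as a global-section condition for an auxiliary dual sheaf $\dshf{S}_x$ on $P$, which tidies up exactly the bookkeeping you flagged as the main obstacle.
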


\begin{proof}
Because the category of sets (or the category of vector spaces) is complete, then the category of sheaves of sets (or vector spaces) is complete \cite{Gray_1962}.  The sheaf $\shf{S}$ is precisely the category theoretic \emph{limit} of the diagram of sheaves given by $\dshf{S}$, which exists by completeness.   What follows is an explicit construction following \cite{Gray_1962}.

We first need to construct the poset for $\shf{S}$.  This is easily done: let $X$ be the disjoint union of all of the posets for each sheaf $\dshf{S}(A)$ (where $A\in P$) under the equivalence relation in which elements are matched by the order preserving maps for each extension $\dshf{S}(A \le B)$.  For each $x\in X$, observe that we can construct a new dual sheaf $\dshf{S}_x$ from $\dshf{S}$ by
\begin{enumerate}
\item $\dshf{S}_x(A) = \left(\dshf{S}(A)\right)(x)$ (a set!) for each $A \in P$, and
\item $\dshf{S}_x(A \le B) = i_{(A \to B),x}^*$, which is the $x$-component of the dual sheaf morphism $i_{A\to B}^* : \dshf{S}(B) \to \dshf{S}(A)$ induced by the inclusion $A \to B$.
\end{enumerate}
Using this dual sheaf, we define $\shf{S}(x)$ to be the set of global sections of $\dshf{S}_x$.  We define the restriction 
$\shf{S}(x \le y)$ to follow the \emph{restrictions} of the \emph{sheaves} $\left(\dshf{S}(A)\right)(x \le y) : \left(\dshf{S}(A)\right)(x) \to \left(\dshf{S}(A)\right)(y)$ at each element $A$ of $P$.

Each morphism $m_A$ projects out the elements of $\shf{S}(x)$ (already a direct product!) to $\dshf{S}_x(A) = \left(\dshf{S}(A)\right)(x)$.  Given this construction of $\shf{S}$ and morphisms $m_\bullet$, the argument for any other sheaf $\shf{R}$ in Proposition \ref{prop:universality} goes through unchanged.
\end{proof}

\begin{figure}
\begin{center}
\includegraphics[width=3in]{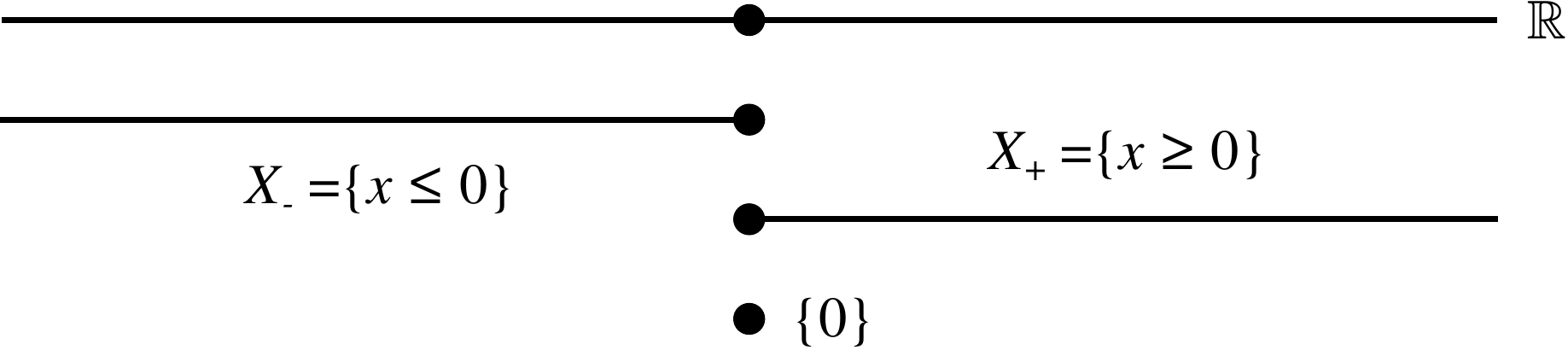}
\caption{Regions for modeling waves scattering along a segmented string}
\label{fig:string_scattering}
\end{center}
\end{figure}

\begin{example}
  Consider the case of waves along a string made of two segments with different phase speeds.  As shown in Figure \ref{fig:string_scattering}, suppose that the string is along the real line, and that the segment $X_-=\{x \le 0\}$ has phase speed $c_-$, the segment $X_+=\{x \ge 0\}$ has phase speed $c_+$, and the ``knot'' between the segments at $x=0$ ensures $C^1$ continuity.  For simplicity, we assume linear wave models on each segment and focus on the single frequency case where all solutions have an $e^{i\omega t}$ dependence.  In this case, the equations are
  \begin{equation*}
    \begin{cases}
      -\omega^2 u_- - c_-^2u''_- = 0&\text{for }x \le 0\\
      -\omega^2 u_+ - c_+^2 u''_+ = 0&\text{for }x \ge 0\\
      u_-(0)=u_+(0)&\\
      u'_-(0)=u'_+(0)&
      \end{cases}
  \end{equation*}
  where we have used $u_{\pm}$ to emphasize that from the outset $u(x)$ for $x>0$ and $x<0$ are unrelated.  The last two equations implement a particular boundary condition at $x=0$.

  We can encode each of these as solution sheaves, individually over $X_-$, $X_+$, and $\{0\}$ as shown by the solid arrows in Figure \ref{fig:string_sheaves}, which is built on the poset $X=\{\{0\},X_-,X_+\}$ with order coming from inclusion.  Within $X_-$, the subspace $S_-$ consists of the space of functions spanned by
  \begin{equation*}
    \{e^{ik_-x}, e^{-ik_-x}\},
  \end{equation*}
  namely a subspace isomorphic to $\mathbb{C}^2$, where $k_-=\omega/c_-$.  Similarly, the subspace $S_+$ is isomorphic to $\mathbb{C}^2$.

  The enforcement of the boundary conditions amounts to constructing extension maps for a dual sheaf $\dshf{S}$ of sheaves shown as the dashed arrows in Figure \ref{fig:string_sheaves}.  Observe that the diagram formed by dashed and solid arrows is commutative.  Without belaboring the point, the dashed arrows in the Figure are evaluations of functions -- the unlabeled functions in the Figure mapping to $\mathbb{C}^2$ compute the value of a function and its derivative at $0$.  Therefore, computing stalk-wise global sections of each sheaf yields a dual sheaf of vector spaces given by the diagram
  \begin{equation*}
\xymatrix{ \mathbb{C}^2 \ar[rr]^-{\begin{pmatrix}1&1\\ik_-&-ik_-\end{pmatrix}} && \mathbb{C}^2 && \mathbb{C}^2 \ar[ll]_-{\begin{pmatrix}1&1\\ik_+&-ik_+\end{pmatrix}} }
\end{equation*}
The space of global sections of this dual sheaf is isomorphic to $\mathbb{C}^2$, because each of the matrices are of full rank if we assume that $k_-$ and $k_+$ are both nonzero.

Now, to derive the sheaf $\shf{S}$ from the dual sheaf $\dshf{S}$ according to Theorem \ref{thm:reconstruct}, we must construct dual sheaves $\dshf{S}_x$ associated to each element of the poset $x\in X$, namely
\begin{eqnarray*}
\dshf{S}_{\{0\}} &=& \xymatrix{ \mathbb{C}^2 \ar[rr]^-{\begin{pmatrix}1&1\\ik_-&-ik_-\end{pmatrix}} && \mathbb{C}^2 && \mathbb{C}^2 \ar[ll]_-{\begin{pmatrix}1&1\\ik_+&-ik_+\end{pmatrix}} },\\
\dshf{S}_{X_-} &=& C^\infty((-\infty,0],\mathbb{C}), \text{ and} \\
\dshf{S}_{X_+} &=& C^\infty([0,\infty),\mathbb{C}).
\end{eqnarray*}
The Theorem has us construct $\shf{S}$ stalkwise as the space of global sections of each of these dual sheaves, and the restrictions of $\shf{S}$ are those maps induced on global sections by the restriction maps in each stalk of the dual sheaf $\dshf{S}$.  Namely the diagram for $\shf{S}$ is
\begin{equation*}
\xymatrix{
C^\infty((-\infty,0],\mathbb{C})& &&& \mathbb{C}^2 \ar[llll]_-{\begin{pmatrix}\frac{-3k_--k_+}{2k_-}e^{ik_-x}&\frac{-3k_-+k_+}{2k_-}e^{ik_-x}\\\frac{k_++k_-}{2k_-}e^{-ik_-x}&\frac{k_--k_+}{2k_-}e^{-ik_-x}\\\end{pmatrix}} \ar[rr]^-{\begin{pmatrix}e^{ik_+x}&0\\0&e^{-ik_+x}\end{pmatrix}} && C^\infty([0,\infty),\mathbb{C}).
}
\end{equation*}
\end{example}

\begin{figure}
\begin{center}
\includegraphics[width=3.5in]{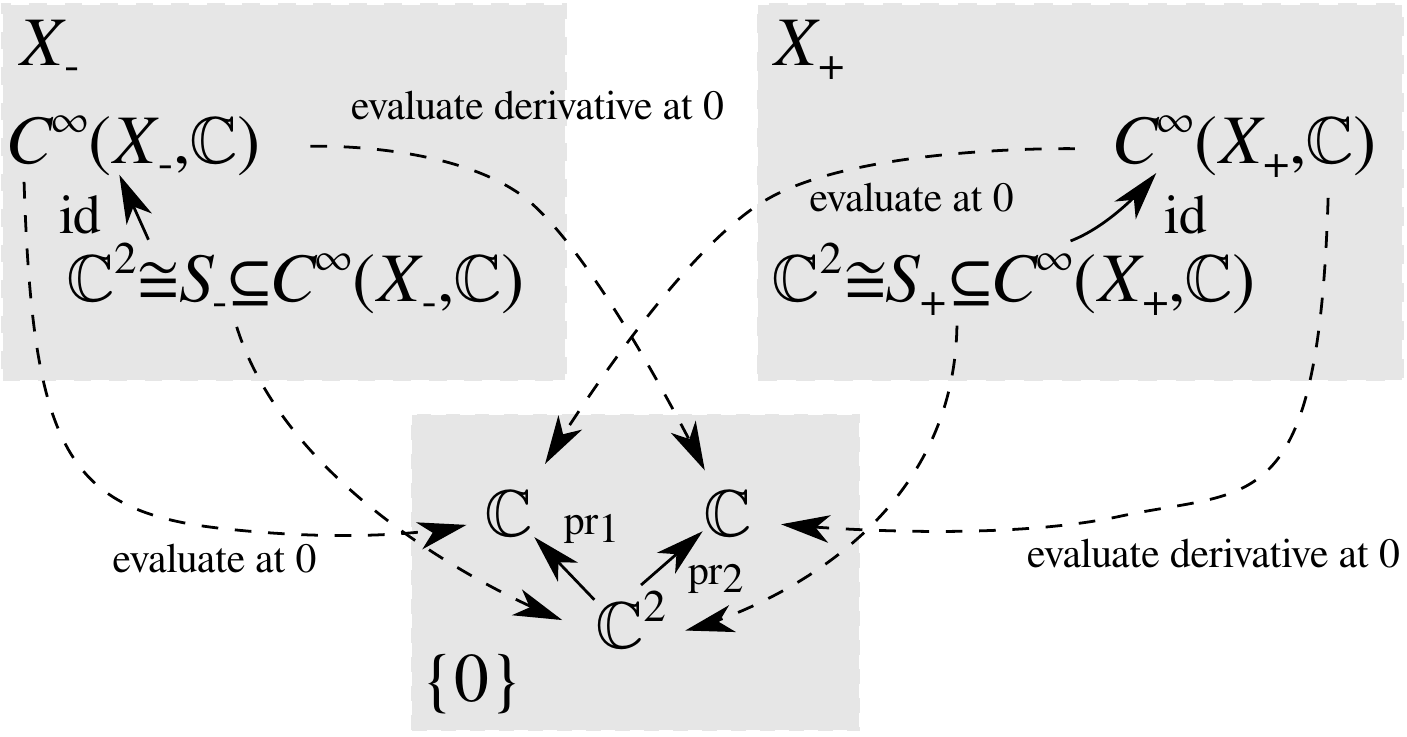}
\caption{Dual sheaf of sheaves describing the propagation of waves along a segmented string.  Solid lines are restriction maps of each sheaf along each segment, marked in the shaded regions.  Dashed lines are the extensions of the dual sheaf.}
\label{fig:string_sheaves}
\end{center}
\end{figure}

\begin{figure}
\begin{center}
\includegraphics[width=2.5in]{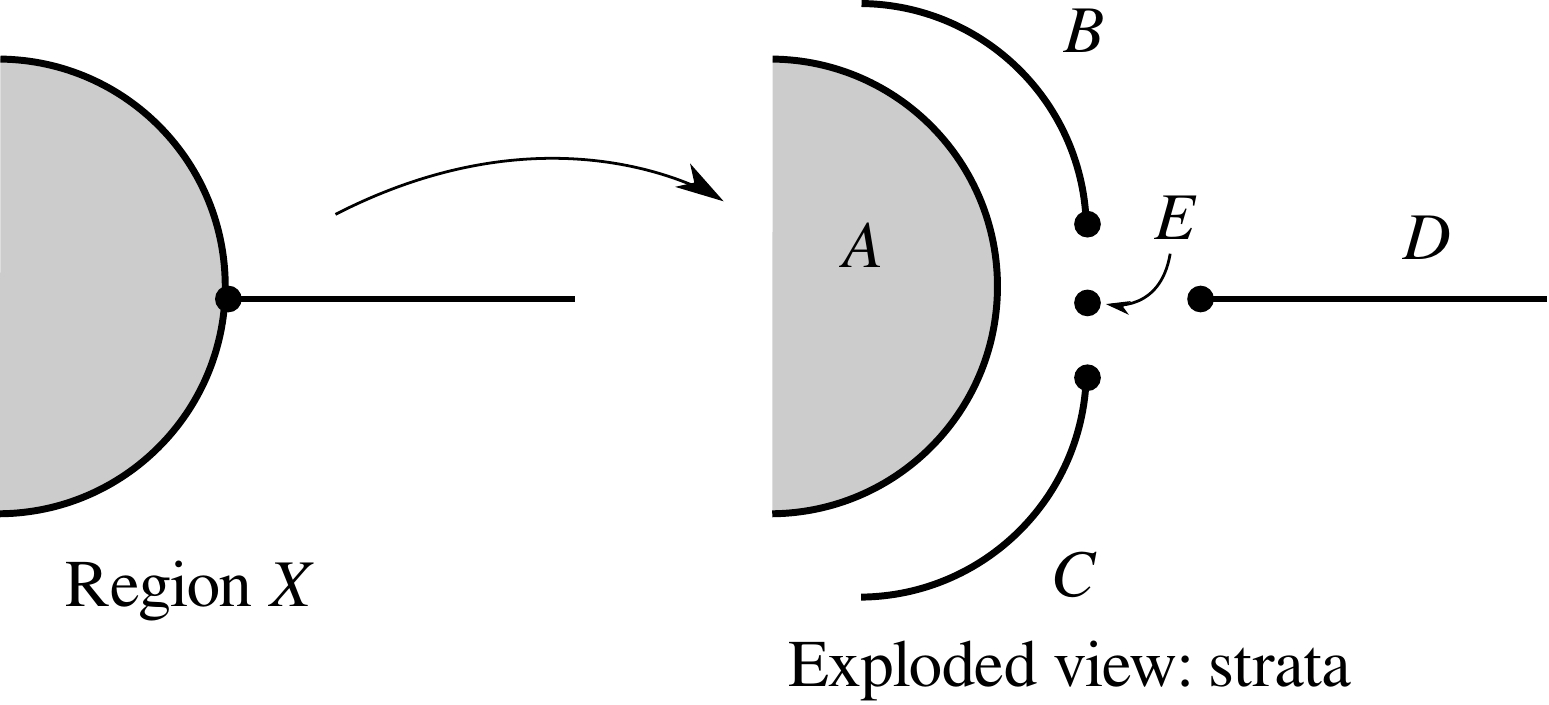}
\caption{Region for modeling the diffraction of waves exiting a channel into an open area (left) and its decomposition into strata (right)}
\label{fig:scattering_diffraction}
\end{center}
\end{figure}

\begin{example}
  If traveling waves along a narrow channel exit into a large open area, diffraction occurs.  Nearly the same formulation as in the previous example works, though the stratification is quite different as is shown in Figure \ref{fig:scattering_diffraction}.  Analogous to wave propagation along a string, the propagation along the narrow channel is split into two traveling waves:
  \begin{equation*}
    u(x) = a e^{ikx} + b e^{-ikx}.
  \end{equation*}
  However, on an open, 2-dimensional region, the solution is given by an integral
  \begin{equation*}
    u(x,y) = \int_0^{2\pi} c(\theta) e^{ik(x \cos \theta + y \sin \theta)} d\theta,
  \end{equation*}
  where $c$ is best thought of as a complex-valued measure on the unit circle.  If we write the space of complex valued measures on a manifold $N$ as $M(N,\mathbb{C})$, then the dual sheaf (of spaces of global sections) that models the propagation of waves on each stratum in Figure \ref{fig:scattering_diffraction} is given by the diagram
  \begin{equation*}
    \includegraphics[width=1.75in]{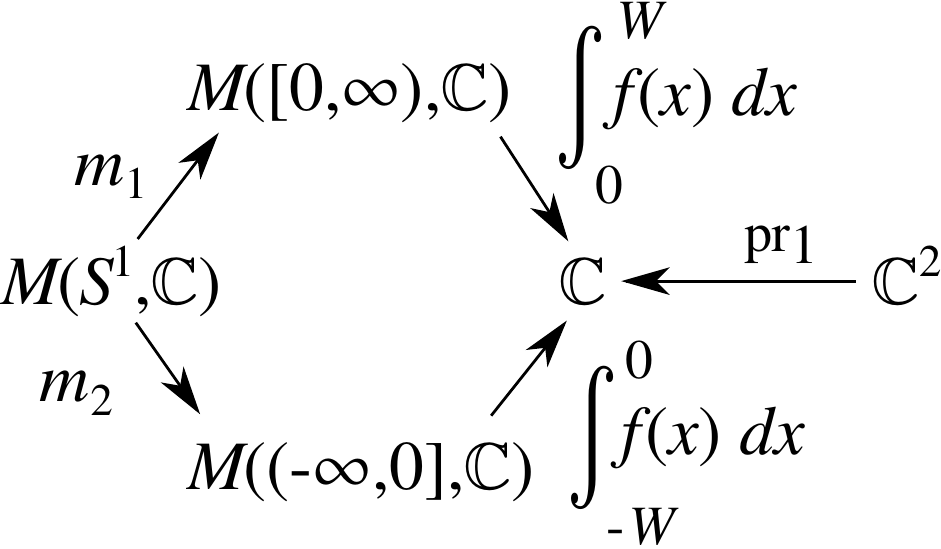}
  \end{equation*}
  in which the map $m_1$ takes the measure $c$ on the circle to the measure
  \begin{equation*}
    \left(m_1 c\right)(t) = \int_0^{2\pi}  c(\theta) e^{ik(p_x(t) \cos \theta + p_y(t) \sin \theta)} d\theta,
  \end{equation*}
  and
  \begin{equation*}
    \left(m_2 c\right)(t) = \int_0^{2\pi}  c(\theta) e^{ik(q_x(t) \cos \theta + q_y(t) \sin \theta)} d\theta,
  \end{equation*}
  in which the paths $(p_x(t),p_y(t))$ and $(q_x(t),q_y(t))$ trace out the coordinates of the upper and lower edges of the 2-dimensional region.

  We can study the Dirichlet problem for this dual sheaf by constraining the values taken by maps $m_1$ and $m_2$ to be zero except at the stratum $E$.  This is done by way of a dual sheaf morphism that annihilates the stalks where waves are allowed to propagate.  From this, we construct a new dual sheaf via stalk-wise quotients of the other two.  These operations are summarized in the diagram
  \begin{equation*}
    \includegraphics[width=4.5in]{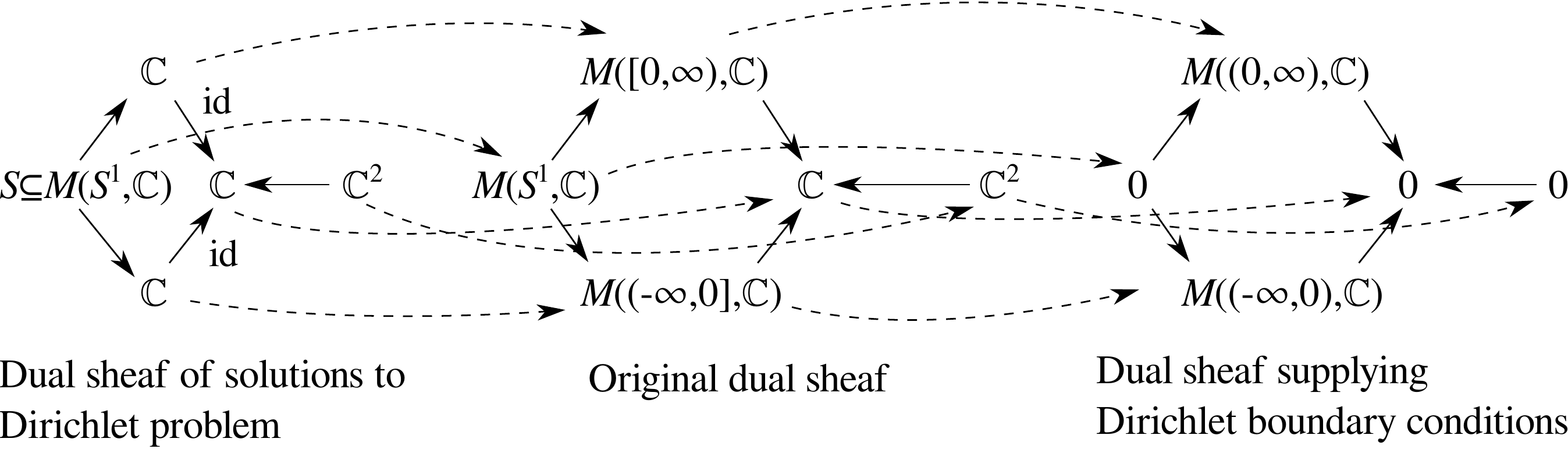}
  \end{equation*}
  Discretizing the leftmost dual sheaf in the above diagram by way of a dual sheaf morphism yields the usual boundary-elements formulation of this kind of problem, with the added benefit that wave propagation along the channel is automatically incorporated into the solution.
\end{example}

\section{Belief propagation networks and graphical models}
\label{sec:belief}

Consider a set of random variables $X_1, \dotsc, X_n$.  Given some knowledge about some of their various joint distributions, how much can be deduced about the others?  There has been considerable attention given to this kind of question in the literature, resulting in several popular algorithms for solving it under certain conditions.  In essence, they all amount to resolving the network into a factor graph.

We can consider the joint distribution over $\{X_1, \dotsc, X_n\}$ and all of its marginal distributions as a set of simultaneous equations according to Section \ref{sec:simultaneous}.  By adding conditional distributions using Bayes' rule, we can model a graphical model as a sheaf.  Belief propagation algorithms are then seen to be approximate methods for computing certain sections of this sheaf.

Assume that $(X_i,\col{M}_i)$ are measurable spaces for $i=1,\dotsc,n$, so that a \emph{random variable} $\Phi_{X_i}$ consists of a signed measure\footnote{We take signed measures rather than probability measures for algebraic convenience.  Throughout, if we start with probability measures, they remain so.  Thus nothing is lost by this perspective.} on $(X_i,\col{M}_i)$.  The space of such signed measures $M(X_i)$ is a vector space in the obvious way, with the sum of two measures on a measurable set being the sum of their respective measures of that set.  A joint distribution on a subset of the random variables, say $X_{i_1}, \dotsc, X_{i_k}$, specifies a probability distribution $\Phi_{X_{i_1} \dotsb X_{i_k}}$ on the measurable space $X_{i_1}\times \dotsb \times X_{i_k}$.

\begin{lemma}
As vector spaces, $M(X_{i_1}\times \dotsb \times X_{i_k}) \cong M(X_{i_1})\otimes \dotsb \otimes M(X_{i_k})$.
\end{lemma}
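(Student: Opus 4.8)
The plan is to write down the canonical comparison map and show it is a linear isomorphism, while being explicit about the sense in which ``$\otimes$'' must be understood. First I would construct the multilinear map
\[
M(X_{i_1}) \times \dotsb \times M(X_{i_k}) \longrightarrow M(X_{i_1} \times \dotsb \times X_{i_k}), \qquad (\mu_1, \dotsc, \mu_k) \longmapsto \mu_1 \times \dotsb \times \mu_k,
\]
sending a tuple of finite signed measures to their product measure, which is well defined via the Hahn--Jordan decomposition of each factor. On a measurable rectangle $A_1 \times \dotsb \times A_k$ the product measure evaluates to $\prod_j \mu_j(A_j)$, visibly linear in each $\mu_j$ separately, so the map is multilinear. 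The universal property of the tensor product then yields a linear map $\Psi : M(X_{i_1}) \otimes \dotsb \otimes M(X_{i_k}) \to M(X_{i_1} \times \dotsb \times X_{i_k})$, and the lemma amounts to the claim that $\Psi$ is bijective.

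For injectivity I would induct on $k$, the case $k=1$ being trivial. Write a given tensor as $\tau = \sum_\ell \mu_1^{(\ell)} \otimes \xi_\ell$ with $\xi_\ell \in M(X_{i_2}) \otimes \dotsb \otimes M(X_{i_k})$ and $\{\mu_1^{(\ell)}\}_\ell$ linearly independent; note $\Psi(\mu_1^{(\ell)} \otimes \xi_\ell) = \mu_1^{(\ell)} \times \Psi_{k-1}(\xi_\ell)$, where $\Psi_{k-1}$ is the corresponding map for $k-1$ factors. Assuming $\Psi(\tau)=0$, fix a rectangle $R$ in $X_{i_2} \times \dotsb \times X_{i_k}$ and let $A_1$ vary: this gives $\sum_\ell \Psi_{k-1}(\xi_\ell)(R)\, \mu_1^{(\ell)} = 0$ as a measure, so each coefficient $\Psi_{k-1}(\xi_\ell)(R)$ vanishes by linear independence. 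Since $R$ was arbitrary, $\Psi_{k-1}(\xi_\ell) = 0$ for every $\ell$, and the inductive hypothesis forces $\xi_\ell = 0$, hence $\tau = 0$.

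The hard part is surjectivity, and here the precise statement of the lemma matters. A signed measure on the product $\sigma$-algebra is pinned down by its values on the algebra of finite disjoint unions of measurable rectangles, so the image of $\Psi$ is ``dense'' in that weak sense; but a \emph{finite} linear combination of product measures cannot realize an arbitrary measure on the product when the $X_{i_j}$ are infinite --- for instance, arc length along a diagonal in $[0,1]^2$ restricts to a purely atomic measure on the diagonal for every finite combination of product measures, yet is itself non-atomic --- so the algebraic tensor product is genuinely too small in general. I would therefore state and prove the lemma in the form actually used in this section, namely for \emph{finite} sets $X_{i_j}$ (each with its power-set $\sigma$-algebra): then both sides are finite dimensional, the dimension of each space of measures equals the cardinality of its underlying set, the two dimensions agree (both equal $\prod_j |X_{i_j}|$), and the injective map $\Psi$ is automatically surjective by a dimension count. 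For completeness I would add a remark that in the general measurable setting the isomorphism still holds once $\otimes$ is read as the completed (projective Banach-space) tensor product of the spaces of signed measures, where the density observation above upgrades to a genuine isometric isomorphism.
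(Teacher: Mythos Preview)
Your proposal is correct and considerably more careful than the paper's own treatment. The paper's entire proof is the sentence ``The proof of this lemma follows directly from the definition of the tensor product,'' with no further detail. You, by contrast, actually construct the comparison map, verify injectivity, and---crucially---observe that surjectivity of the \emph{algebraic} tensor product genuinely fails for infinite measurable spaces (your diagonal-arc-length counterexample is apt). The paper does not acknowledge this subtlety at all; it states the lemma for arbitrary measurable spaces $(X_i,\mathcal{M}_i)$ and simply asserts the isomorphism. Your restriction to finite $X_{i_j}$ matches the only explicit examples the paper works (e.g.\ $X_1=X_2=X_3=\{0,1\}$), and your dimension-count argument there is the clean way to finish. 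So what you have done is not merely a different route to the same result: you have identified that the lemma as stated requires an unspoken hypothesis (finiteness, or else a completed tensor product), supplied that hypothesis, and then given a proof that actually closes. Keep the remark about the projective tensor product if you like, but for the purposes of this section the finite case is all that is ever used.
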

The proof of this lemma follows directly from the definition of the tensor product.

\begin{corollary}
  \label{cor:multiple_marg}
  The set projection $\pr_j: X_{i_1}\times \dotsb \times X_{i_k} \to X_{i_1}\times \dotsb \widehat{X_{i_j}} \dotsb \times X_{i_k}$ for any $j$ lifts to a linear \emph{marginalization} map $m_j: M(X_{i_1}\times \dotsb \times X_{i_k}) \to M(X_{i_1}\times \dotsb \widehat{X_{i_j}} \dotsb \times X_{i_k})$, where we use the hat to indicate an omitted variable.  We sometimes speak of $m_j$ ``marginalizing out $X_{i_j}$'' from the joint distribution.  Further, marginalizing a pair of random variables out from a joint distribution is independent of their order.
\end{corollary}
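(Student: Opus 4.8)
The plan is to read the claim through the preceding Lemma. Under the identification $M(X_{i_1}\times\dotsb\times X_{i_k})\cong M(X_{i_1})\otimes\dotsb\otimes M(X_{i_k})$, I would \emph{define} the marginalization map by
\[
m_j \;=\; \id_{M(X_{i_1})}\otimes\dotsb\otimes\varepsilon_{i_j}\otimes\dotsb\otimes\id_{M(X_{i_k})},
\]
where $\varepsilon_i\colon M(X_i)\to\mathbb{R}$ is the total-mass functional $\varepsilon_i(\nu)=\nu(X_i)$ and the canonical isomorphism $V\otimes\mathbb{R}\cong V$ deletes the $j$-th factor. Linearity of $m_j$ is then immediate: $\varepsilon_i$ is linear, and a tensor product of linear maps is linear.

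Next I would verify that this $m_j$ genuinely \emph{lifts} $\pr_j$, i.e. that it coincides with the measure pushforward $(\pr_j)_*$, where $\left((\pr_j)_*\mu\right)(A)=\mu\!\left(\pr_j^{-1}(A)\right)$; this pushforward is well defined because coordinate projections are measurable with respect to the product $\sigma$-algebras. Since $m_j$ and $(\pr_j)_*$ are both linear, it suffices to compare them on a spanning set, and by the Lemma the product measures span $M(X_{i_1}\times\dotsb\times X_{i_k})$. On a simple tensor $\nu_1\otimes\dotsb\otimes\nu_k$ the pushforward returns $\nu_{i_j}(X_{i_j})\cdot\bigl(\nu_1\otimes\dotsb\widehat{\nu_{i_j}}\dotsb\otimes\nu_k\bigr)$, which is exactly $m_j(\nu_1\otimes\dotsb\otimes\nu_k)$; when $\nu_{i_j}$ is a probability measure this is the usual marginal distribution, which is what justifies the phrase ``marginalizing out $X_{i_j}$''.

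The order-independence in the last sentence is then purely formal. Under the tensor identification, marginalizing out $X_{i_j}$ and then $X_{i_l}$ --- in either order --- is the single map obtained by applying both augmentations $\varepsilon_{i_j}$ and $\varepsilon_{i_l}$ in their respective, distinct tensor slots, and such slot-wise maps commute by bifunctoriality of $\otimes$; equivalently, the two iterated coordinate projections are literally the same set map (both forget the $i_j$ and $i_l$ coordinates), so their pushforwards agree by functoriality $(g\circ f)_*=g_*\circ f_*$. The only step that is not bookkeeping is the identification $m_j=(\pr_j)_*$, and even that reduces to the spanning property of product measures already packaged in the Lemma together with the measurability of projections; I expect this to be the main, and fairly mild, obstacle, everything else being formal manipulation of tensor products.
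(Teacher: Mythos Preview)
Your argument is correct and is exactly the fleshing-out one would expect from the word ``Corollary'': the paper gives no proof at all for this statement, treating it as an immediate consequence of the tensor-product Lemma, and your use of the identification $M(\prod X_{i_\ell})\cong\bigotimes M(X_{i_\ell})$ to realize $m_j$ as $\id\otimes\dotsb\otimes\varepsilon_{i_j}\otimes\dotsb\otimes\id$ is precisely the intended mechanism. Your additional check that this agrees with the pushforward $(\pr_j)_*$ and your two independent arguments for order-independence (bifunctoriality of $\otimes$, functoriality of pushforward) go beyond what the paper records but are entirely in its spirit.
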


\begin{example}
  \label{eg:three_rvs}
  Suppose that $X_1=X_2=X_3=\{0,1\}$, so that the space of signed measures over each is 2-dimensional.  By the Lemma, the space of measures over the product $X_1 \times X_2 \times X_3$ is 8-dimensional.  The projection $\pr_1: X_1 \times X_2 \times X_3 \to X_2 \times X_3$ lifts to the marginalization $m_1$ given by the matrix
  \begin{equation*}
    m_1 = \begin{pmatrix}
      1& 0& 0& 0& 1& 0& 0& 0\\
      0& 1& 0& 0& 0& 1& 0& 0\\
      0& 0& 1& 0& 0& 0& 1& 0\\
      0& 0& 0& 1& 0& 0& 0& 1\\
      \end{pmatrix},
  \end{equation*}
  in which the basis elements are written in lexicographical order.  Similarly, the marginalization $m_2$ is given by
    \begin{equation*}
    m_2 = \begin{pmatrix}
      1& 0& 1& 0& 0& 0& 0& 0\\
      0& 1& 0& 1& 0& 0& 0& 0\\
      0& 0& 0& 0& 1& 0& 1& 0\\
      0& 0& 0& 0& 0& 1& 0& 1\\
      \end{pmatrix}.
  \end{equation*}
    If we marginalize twice, $X_1 \times X_2 \times X_3 \to X_2 \times X_3 \to X_3$ or $X_1 \times X_2 \times X_3 \to X_1 \times X_3 \to X_3$, then we obtain the same map, namely
    \begin{equation*}
      \begin{pmatrix}
        1& 0& 1& 0& 1& 0& 1& 0\\
        0& 1& 0& 1& 0& 1& 0& 1\\
      \end{pmatrix}.
    \end{equation*}
\end{example}

We can use the marginalization maps to describe a set of random variables as a system of (linear) equations.  Specifically, let the set $V$ of variables be the power set of $\{X_1, \dotsc, X_n\}$. For each variable $v=X_I$, let $W_v = M(X_I)$ where $I=\{i_1,\dotsc,i_k\}$.  The set of equations $E$ consists of all possible marginalizations, namely equations of the form\footnote{We need not consider marginalizing multiple variables out because of Corollary \ref{cor:multiple_marg}.}
\begin{equation}
  \label{eq:marginal}
  \Phi_{X_{i_1} \dotsb \widehat{X_{i_j}} \dotsb X_{i_k}} = m_j \Phi_{X_{i_1} \dotsb X_{i_k}}.
\end{equation}
Notice that this system is explicit according to Definition \ref{def:explicit} and has a dependency graph in which all arrows point from joint distributions over a set of variables to subsets of those variables.  Thus, it is straightforward to construct a sheaf model $\shf{J}$ of this system using the techniques of Section \ref{sec:simultaneous}.  Specifically, the poset in question is $V \sqcup E$, and for each variable $\shf{J}(X_I) = M(X_I)$.  Each equation $e$ involves exactly two variables $\{X_{i_1}, \dotsc, X_{i_k}\}$ and $\{X_{i_1}, \dotsc, \widehat{X_{i_j}} ,\dotsc, X_{i_k}\}$, so that
\begin{enumerate}
\item $\shf{J}(e) = M(X_{i_1} \times \dotsb \times X_{i_k})$,
\item $\shf{J}(e \le \{X_{i_1}, \dotsc, X_{i_k}\})$ is the identity map, and
\item the other restriction $\shf{J}(e \le \{X_{i_1}, \dotsc, \widehat{X_{i_j}}, \dotsc, X_{i_k}\})$is the marginalization function $m_j$.
\end{enumerate}

\begin{example}
  \label{eg:big_marginal}
  Continuing Example \ref{eg:three_rvs}, the sheaf $\shf{J}$ associated to the system of random variables is given by the diagram
  \begin{center}
    \includegraphics[width=2.5in]{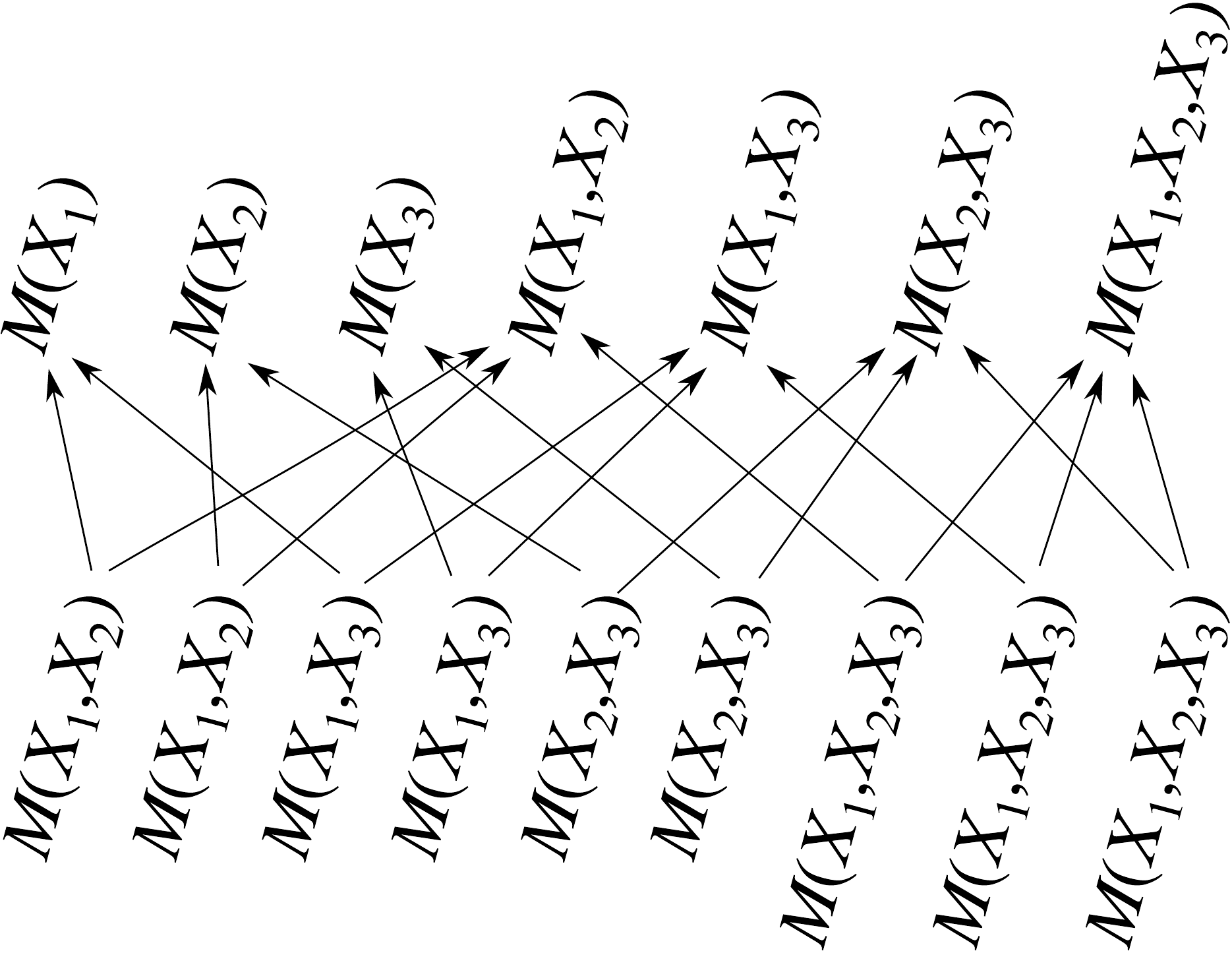}
  \end{center}
  The arrows are labeled either with identity maps or marginalizations as appropriate.
\end{example}

From the example, it is clear that the sheaf $\shf{J}$ contains a number of duplicate stalks with identity maps between them.  Since the dependency graph for the joint distributions is a directed acyclic graph, there is an equivalent sheaf $\shf{J}'$ over a smaller poset.  Consider the partial order $\le$ on only the variables $V$, for which
\begin{equation}
\label{eq:containment_order}
  X_I \le X_J\text{ if }J \subseteq I.
\end{equation}
Then $\shf{J}'$ is given the same stalks as $\shf{J}$ over the variables, but we let $\shf{J}'(X_I \le X_J)$ be the composition of marginalization functions.

\begin{example}
  \label{eg:small_marginal}
  The sheaf diagram in Example \ref{eg:big_marginal} reduces considerably under this process, yielding a diagram consisting only of joint distributions and marginalization functions
  \begin{center}
    \includegraphics[width=2in]{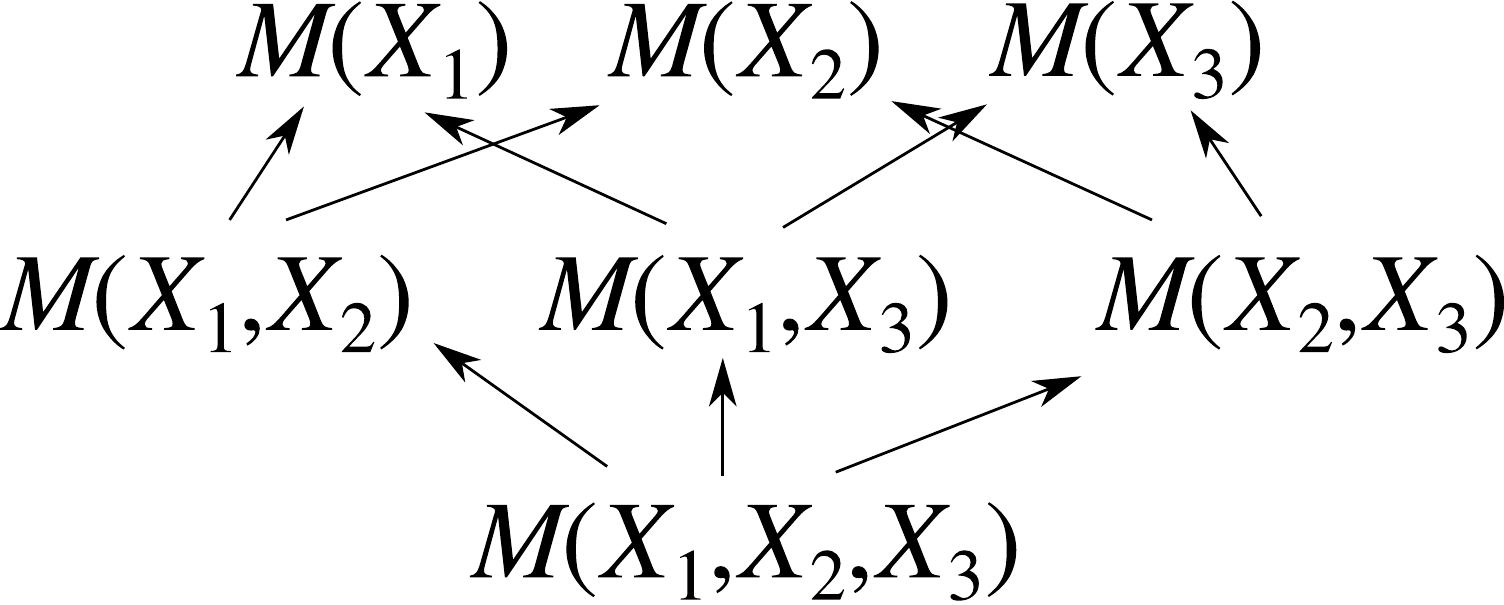}
  \end{center}
\end{example}

The constructions of $\shf{J}'$ and $\shf{J}$ are related by a pushforward along an order preserving function (compare Definition \ref{df:pushforward_dualsheaf}).

 \begin{definition}
\label{df:pushforward_sheaf}
  Suppose $f:X\to Y$ is an order preserving function between posets and that $\shf{R}$ is a sheaf on $X$.  The \emph{pushforward} $f_* \shf{R}$ is a sheaf on $Y$ in which
  \begin{enumerate}
    \item Each stalk $(f_* \shf{R})(c)$ is the space of sections over the set $f^{-1}(c) \subset X$, and
    \item The restriction maps $(f_* \shf{R})(a\le b)$ are given by restricting a section $s$ over $f^{-1}(a)$ to one over $f^{-1}(b)$.
  \end{enumerate}
  This construction yields a sheaf morphism\footnote{Warning! This morphism may not be injective.  A sufficient condition for injectivity is given by the Vietoris Mapping Theorem \cite[Thm. 3, Section II.11]{Bredon}, or \cite[Thm. 4.2]{TSPBook}.} $f_*\shf{R} \to \shf{R}$.
\end{definition}

\begin{proposition}
  \label{prop:marginal_iso}
  Consider the function $f: V \sqcup E \to V$ that
\begin{enumerate}
\item Takes sets of random variables (elements of $V$) to sets of random variables and 
\item Takes each equation $e \in E$ involving exactly two variables ($\{X_{i_1}, \dotsc, X_{i_k}\}$ and $\{X_{i_1}, \dotsc, \widehat{X_{i_j}} ,\dotsc, X_{i_k}\}$) to $\{X_{i_1}, \dotsc, X_{i_k}\}$.  (All equations in the marginalization sheaf are of this form!)
\end{enumerate}
If the domain $V \sqcup E$ is given the partial order that each equation is below the variables it involves, and the domain is given the partial order by set containment \eqref{eq:containment_order}, then $f$ is order-preserving.

Then $\shf{J}' = f_* \shf{J}$, and the morphism $\shf{J}' \to \shf{J}$ induces isomorphisms on the space of global sections of $\shf{J}$ and $\shf{J}'$.
\end{proposition}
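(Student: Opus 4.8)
The plan is to establish in turn three things: that $f$ is order preserving, that $\shf{J}'$ agrees with the pushforward $f_*\shf{J}$, and that the resulting morphism $f_*\shf{J}\to\shf{J}$ restricts to a bijection on global sections. For the first, I would simply run through the comparable pairs of $V\sqcup E$. The only nontrivial comparability is $e\le v$ with $v$ one of the two variables involved in $e$; every equation of the marginalization system has the form $\Phi_{X_{i_1}\dotsb\widehat{X_{i_j}}\dotsb X_{i_k}}=m_j\Phi_{X_{i_1}\dotsb X_{i_k}}$, so $e$ involves only $X_I$ (with $I=\{i_1,\dotsc,i_k\}$) and the submarginal $X_{I\setminus\{i_j\}}$, and $f(e)=X_I$. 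If $v=X_I$ then $f(e)=X_I=f(v)$, and if $v=X_{I\setminus\{i_j\}}$ then the index set of $v$ is contained in $I$, so $f(e)=X_I\le f(v)$ in the containment order \eqref{eq:containment_order}. Thus $f$ is order preserving, and since it restricts to the identity on $V$ it is surjective.

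Next I would identify the stalks and restrictions of $f_*\shf{J}$ with those of $\shf{J}'$. For a variable $X_K$ the fiber $f^{-1}(X_K)$ is $X_K$ together with the finitely many equations $e$ for which $X_K$ is the joint distribution being marginalized; inside this fiber the only order relations are $e\le X_K$, and the corresponding restriction of $\shf{J}$ is the identity. Hence a local section of $\shf{J}$ over $f^{-1}(X_K)$ is freely determined by its value at $X_K$, so projection onto that coordinate identifies $(f_*\shf{J})(X_K)$ with $M(X_K)=\shf{J}(X_K)=\shf{J}'(X_K)$. For the restrictions, given $X_I\le X_J$ (that is, $J\subseteq I$), passing a section over the $X_I$-fiber to the $X_J$-fiber amounts, under these identifications, to transporting its value through the chain of equation-restrictions of $\shf{J}$ linking $X_I$ down to $X_J$, each link being a single-step marginalization; by Corollary \ref{cor:multiple_marg} the composite is independent of the order in which coordinates are marginalized out, and it is exactly the composition of marginalization functions that defines $\shf{J}'(X_I\le X_J)$. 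So $\shf{J}'=f_*\shf{J}$, and the morphism $f_*\shf{J}\to\shf{J}$ of Definition \ref{df:pushforward_sheaf} becomes the morphism along $f$ whose component at a variable is an identity map and whose component at an equation $e$ copies the joint-distribution value into the product stalk $\shf{J}(e)$.

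Finally I would apply Proposition \ref{prop:morphism} to the morphism $\shf{J}'\to\shf{J}$ along $f$ to get a map from the global sections of $\shf{J}'$ to those of $\shf{J}$, and check it is bijective by unwinding both sides. A global section of $\shf{J}$ assigns $\phi_K\in M(X_K)$ to each variable and an element to each equation stalk, subject to the two restriction constraints at every $e\le v$; the identity constraint forces the value at $e$ to equal $\phi_I$, and the marginalization constraint then reads $\phi_{I\setminus\{i_j\}}=m_j\phi_I$. A global section of $\shf{J}'$ is the family $(\phi_K)$ alone, subject to $\shf{J}'(X_I\le X_J)\phi_I=\phi_J$ for all $J\subseteq I$; composing and decomposing marginalizations shows these composite constraints are equivalent to the single-step constraints $\phi_{I\setminus\{i_j\}}=m_j\phi_I$. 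The induced map carries $(\phi_K)$ to the global section of $\shf{J}$ with the same variable-values and each equation stalk filled in by copying, which is the unique global section of $\shf{J}$ restricting to $(\phi_K)$ on variables; hence it is a bijection.

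I expect the middle step to be the genuine obstacle: pinning down the pushforward restriction maps $(f_*\shf{J})(X_I\le X_J)$ precisely enough to recognize them as the composed marginalizations, since the fibers $f^{-1}(X_I)$ and $f^{-1}(X_J)$ are disjoint and the ``restriction'' between them is mediated entirely by the internal restriction maps of $\shf{J}$. It is Corollary \ref{cor:multiple_marg}, guaranteeing order-independence of iterated marginalization, that makes this map well defined; the order-preservation and global-section steps are then essentially bookkeeping.
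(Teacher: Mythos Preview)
Your proof is correct and follows essentially the same route as the paper: check order-preservation on the only nontrivial comparable pairs $e\le v$, identify each fiber $f^{-1}(X_K)$ as $X_K$ together with the equations having $X_K$ as their larger variable (all with identity restrictions to $X_K$), and conclude that sections over the fiber collapse to $\shf{J}(X_K)$. You are actually more careful than the paper on the point you flag as the obstacle---the paper simply asserts that ``the marginalization maps are carried over unchanged into $\shf{J}'$'' without discussing how the pushforward restriction between disjoint fibers is mediated, whereas you explicitly trace it through the chain of single-step marginalizations and invoke Corollary~\ref{cor:multiple_marg} for well-definedness; similarly, your global-sections argument unwinds both sides explicitly while the paper just says the equality is ``quite clear.''
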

\begin{proof}
To see that $f$ is order preserving, merely suppose that $e\in E$ and $v\in V$ with $e \le v$.  Without loss of generality, suppose that $e$ involves $\{X_{i_1}, \dotsc, X_{i_k}\}$ and $\{X_{i_1}, \dotsc, \widehat{X_{i_j}} ,\dotsc, X_{i_k}\}$.  Therefore, $v$ is either $\{X_{i_1}, \dotsc, X_{i_k}\}$ or $\{X_{i_1}, \dotsc, \widehat{X_{i_j}} ,\dotsc, X_{i_k}\}$.  If $v = \{X_{i_1}, \dotsc, X_{i_k}\}$ then there is nothing to prove since $f(e) = f(v)$. In the other case, 
\begin{eqnarray*}
f(e) &=& \{X_{i_1}, \dotsc, X_{i_k}\}\\
&\le&\{X_{i_1}, \dotsc, \widehat{X_{i_j}} ,\dotsc, X_{i_k}\} = f(v).
\end{eqnarray*}

To see that $\shf{J}' = f_* \shf{J}$, we examine the stalks and restriction maps according to Definition \ref{df:pushforward_sheaf}.  Each stalk of $\shf{J}'$ is the space of sections over its preimage through $f$ in $\shf{J}$.  For instance, let $v= \{X_{i_1}, \dotsc, X_{i_k}\}$.  Then its preimage consists of the set
\begin{equation*}
f^{-1}(v) = \left\{\{X_{i_1}, \dotsc, X_{i_k}\}, \{X_{i_2} ,\dotsc, X_{i_k}\}, \dotsc, \{X_{i_1}, \dotsc, \widehat{X_{i_j}} ,\dotsc, X_{i_k}\}, \dotsc \{X_{i_1}, \dotsc, \dotsc, X_{i_{k-1}}\}  \right\}.
\end{equation*}
The stalk in $\shf{J}$ over each element of $f^{-1}(v)$ is the same, and the restriction maps within the preimage are all identity maps. Therefore, the space of sections of $\shf{J}$ over $f^{-1}(v)$ is precisely the stalk over any element of $f^{-1}(v)$, which by construction is precisely the same as the stalk over $v$ in $\shf{J}'$.  The other restriction maps -- the marginalization maps -- in $\shf{J}$ are carried over unchanged into $\shf{J}'$.

Finally, the above argument makes it quite clear that the global sections of $\shf{J}$ and $\shf{J}'$ must be the same.
\end{proof}

We have thus far considered random variables and not graphical models.  A \emph{graphical model} on random variables $X_1, \dotsc, X_n$ consists of the set of all joint distributions and marginalization equations, but adds some equations of the form
\begin{equation}
  \label{eq:conditional}
  \Phi_{X_I} = L_e \Phi_{X_J}, \text{ where }I \le J,
\end{equation}
and where $L_e$ is a stochastic linear map (column sums are all 1).  The system is still explicit, but we can no longer form a partial order on the variables alone!  The sections of the resulting sheaf $\shf{B}$ are solutions to the graphical model.

\begin{figure}
  \begin{center}
    \includegraphics[width=2in]{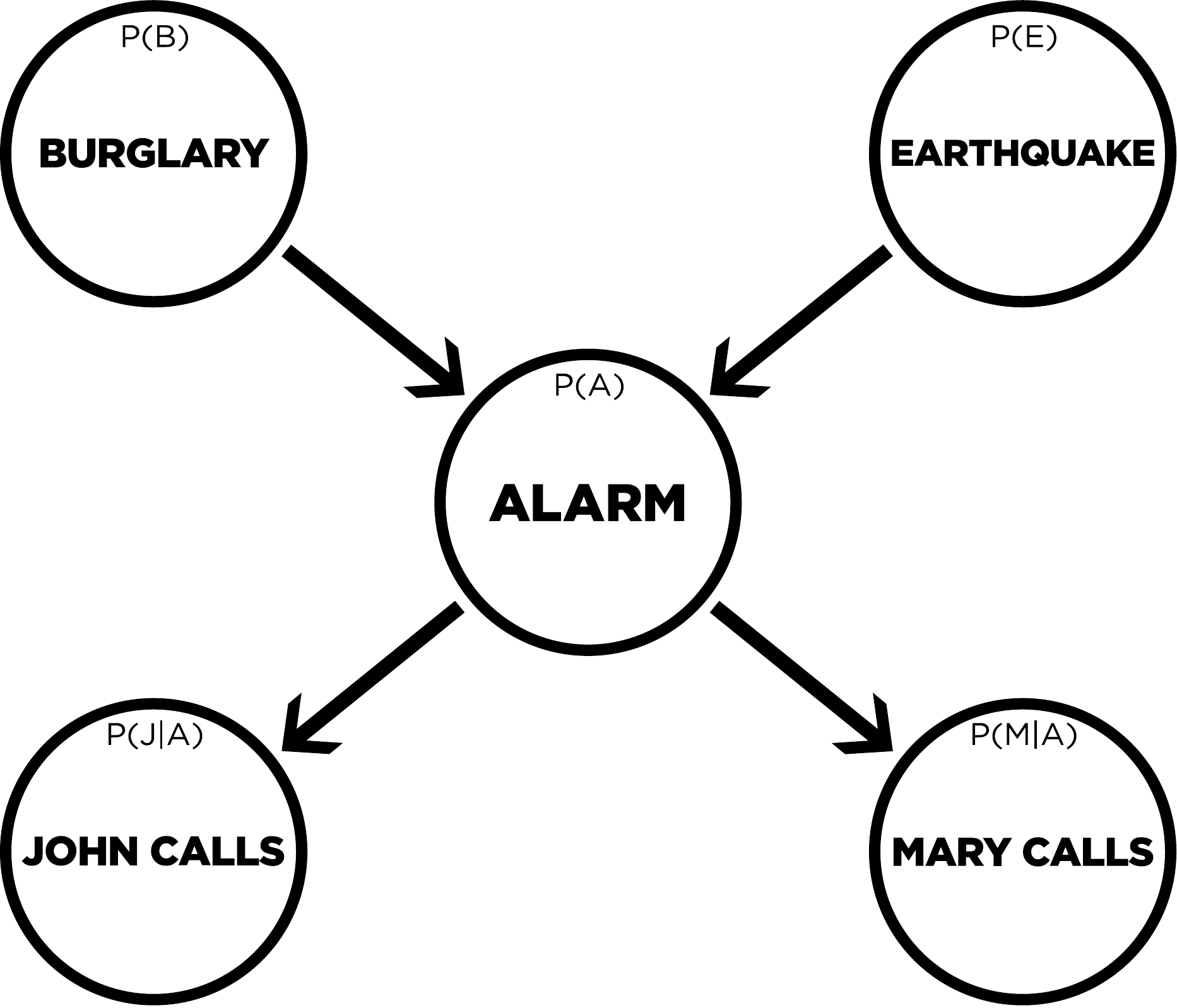}
    \caption{Two events can trigger an alarm.  If the alarm sounds, then with some probability John or Mary will go and investigate the cause of the alarm}
    \label{fig:alarm_causal}
  \end{center}
\end{figure}

\begin{figure}
  \begin{center}
    \includegraphics[width=3in]{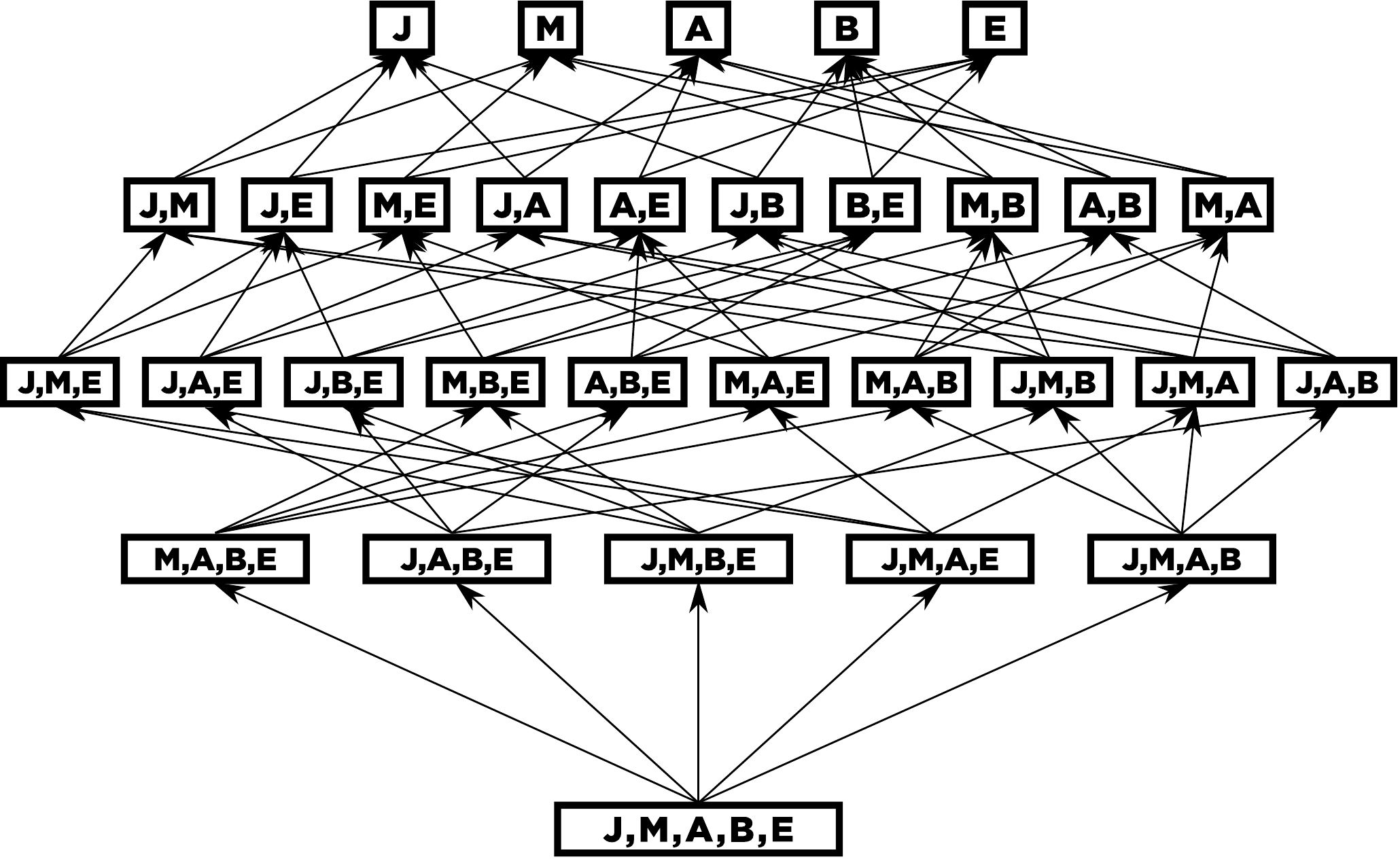}
    \caption{The marginalization sheaf for Example \ref{eg:alarm}}
    \label{fig:alarm_marginals}
  \end{center}
\end{figure}

\begin{figure}
  \begin{center}
    \includegraphics[width=3in]{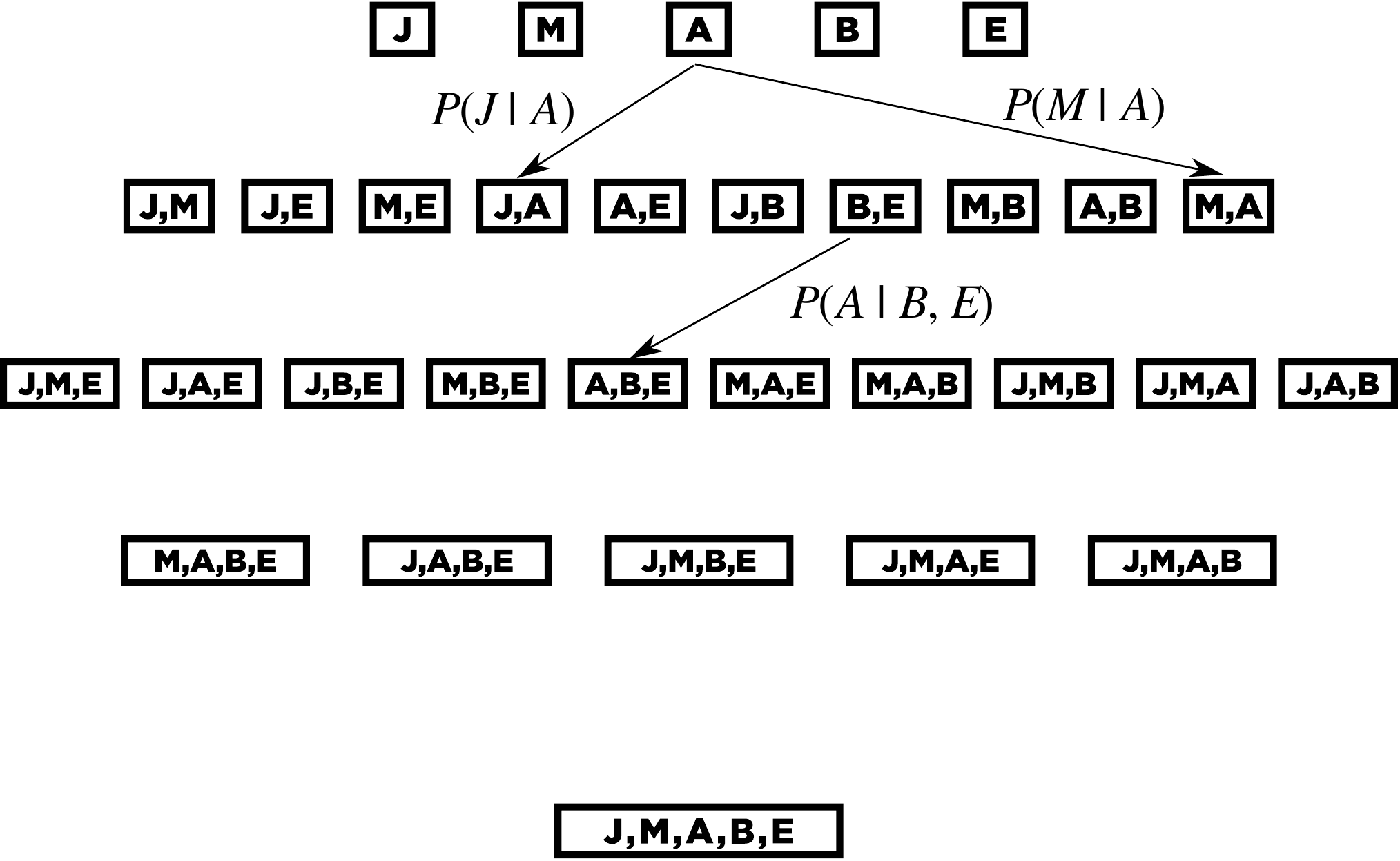}
    \caption{The conditional probability maps for Example \ref{eg:alarm}}
    \label{fig:alarm_conditionals}
  \end{center}
\end{figure}

\begin{example}
  \label{eg:alarm} (Thanks to Olivia Chen for this example and the associated graphics!)
  Consider the situation of two people, John and Mary, in a house in which an alarm sounds.  The alarm can be triggered by two kinds of events: an earthquake or a burglary.  Once the alarm sounds, one of the people may attempt to investigate.  If we view this as a probabilistic situation, we might organize the different events in a causal diagram like the one shown in Figure \ref{fig:alarm_causal}.  There are quite a few marginal probability distributions in this situation, as is shown in Figure \ref{fig:alarm_marginals}.  To complete the graphical model, we add three conditional probabilities, shown in Figure \ref{fig:alarm_conditionals}, corresponding to the arrows marked in Figure \ref{fig:alarm_causal}.
\end{example}

In the literature, such solutions must be \emph{converged} (not change under iterations of the explicit system) and be \emph{consistent} (actually be sections).  Unlike the case of $\shf{J}$ and $\shf{J}' = f_* \shf{J}$ however, $f_* \shf{B}$ and $\shf{B}$ can be rather different.  Belief propagation algorithms generally operate only the variables, and so are reliable when the space of global sections of $f_* \shf{B}$ and $\shf{B}$ are isomorphic.  Although somewhat out of scope from this chapter, sheaf cohomology (Definition \ref{df:cohomology}) provides sufficient conditions for this to occur.  We include the statement for completeness.

 \begin{proposition}
   \label{prop:bayes_localcoh}
   Suppose that $\shf{B}$ is a sheaf model of a graphical model and that $f$ is the order preserving map defined in Proposition \ref{prop:marginal_iso}.  Then the map on global sections induced by $f_* \shf{B} \to \shf{B}$ is an isomorphism whenever $H^k(f^{-1}(v);\shf{B}) = 0$ (see Definition \ref{df:cohomology}) for all sets of random variables $v$ and all $k>0$. 
 \end{proposition}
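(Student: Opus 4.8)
Let me propose a proof.

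\medskip

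\noindent\textbf{Proof proposal.}
The plan is to deduce this from the sheaf-theoretic Vietoris mapping theorem applied to the order preserving map $f$ --- the same circle of ideas the paper invokes in the footnote to Definition \ref{df:pushforward_sheaf}. Write $X = V \sqcup E$ for the poset carrying $\shf{B}$ (the variable sets, the marginalization equations, and the conditional equations \eqref{eq:conditional}), $Y = V$ for the poset of variable sets with the containment order \eqref{eq:containment_order}, and $f : X \to Y$ for the map of Proposition \ref{prop:marginal_iso}, extended so that a conditional $\Phi_{X_I} = L_e\Phi_{X_J}$ is sent to $X_I$; this $f$ is order preserving. Definition \ref{df:pushforward_sheaf} furnishes the morphism $f_*\shf{B}\to\shf{B}$, and Proposition \ref{prop:morphism} turns it into a map $\Gamma(V;f_*\shf{B}) \to \Gamma(X;\shf{B})$ on global sections; since $\Gamma = H^0$, bijectivity of this map is the assertion. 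One half is free: the components of $f_*\shf{B}\to\shf{B}$ are the fibrewise evaluations $(f_*\shf{B})(f(x)) = \Gamma(f^{-1}(f(x));\shf{B}) \to \shf{B}(x)$, so a global section of $f_*\shf{B}$ that dies in every stalk of $\shf{B}$ is the zero section on every fibre, hence zero. The hypothesis is needed only for surjectivity.

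The engine is the observation that the $q$-th higher direct image $R^q f_*\shf{B}$ --- a sheaf on $V$ --- has stalk at $v$ equal to the fibre cohomology $H^q(f^{-1}(v);\shf{B})$; granting this, the hypothesis says exactly that $R^q f_*\shf{B}$ has vanishing stalks, hence is the zero sheaf, for every $q>0$, so that $f_*\shf{B}$ coincides with the full derived pushforward. The Vietoris mapping theorem (\cite[Thm.~3, \S II.11]{Bredon}; \cite[Thm.~4.2]{TSPBook}) then gives that $f_*\shf{B}\to\shf{B}$ induces isomorphisms $H^p(V;f_*\shf{B}) \xrightarrow{\sim} H^p(X;\shf{B})$ for all $p$, and $p=0$ is the proposition. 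To prove the stalk identification I would work directly in the Alexandroff topology: the stalk of $R^q f_*\shf{B}$ at $v$ is $\varinjlim_{W \ni v} H^q(f^{-1}(W);\shf{B})$, which stabilizes at the minimal neighbourhood $W = U_v$, so it suffices to check that the inclusion of the literal fibre $f^{-1}(v)$ into $f^{-1}(U_v)$ is a $\shf{B}$-cohomology equivalence. Here the combinatorics of the marginalization/conditional poset carries the argument: each fibre $f^{-1}(X_I)$ is a ``star'' having $X_I$ as its unique maximal element, with the marginalizations out of $X_I$ and the conditionals into $X_I$ lying beneath it, so its $\shf{B}$-cohomology is that of the small \v{C}ech complex of the cover $\{U_e\}$ by the open stars of those equations; and $f^{-1}(U_{X_I})$ is assembled from $f^{-1}(X_I)$ together with the strictly smaller sets $f^{-1}(X_J)$ for $J \subsetneq I$, so an induction on $|I|$ made of Mayer--Vietoris steps transports the computation down from the fibres.

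The step I expect to be the main obstacle is exactly this reconciliation: the pushforward of Definition \ref{df:pushforward_sheaf} is the \emph{fibrewise} one, with stalk $\Gamma(f^{-1}(v);\shf{B})$ at $v$, whereas the derived-functor machinery a priori sees $f^{-1}(U_v)$ rather than the literal fibre $f^{-1}(v)$, and one must show these have the same $\shf{B}$-cohomology --- which is where the star structure of the fibres and the acyclicity hypothesis are spent --- so that under the hypothesis the fibrewise pushforward agrees with the honest direct image and the classical argument applies. A secondary, but purely routine, point is to note that the cited topological Vietoris theorem transfers verbatim to the finite-poset setting, the ``local compactness'' and ``closedness of $f$'' hypotheses being automatic there. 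In purely degree-zero language the content is the surjectivity of $\Gamma(V;f_*\shf{B}) \to \Gamma(X;\shf{B})$: the obstruction to lifting a global section of $\shf{B}$ back along $f_*$ is a class assembled from the groups $H^1(f^{-1}(v);\shf{B})$ inside $H^0(V; R^1 f_*\shf{B})$, which the hypothesis annihilates term by term, while the higher vanishings ensure that no secondary obstructions survive.
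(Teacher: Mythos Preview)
Your approach is exactly the paper's: its proof consists of a single sentence citing the Vietoris Mapping Theorem (\cite[Thm.~3, \S II.11]{Bredon} or \cite[Thm.~4.2]{TSPBook}) and declaring that the statement follows immediately. You have supplied considerably more detail than the paper itself --- including the identification of the hypothesis with vanishing of the higher direct images and the reconciliation between the fibrewise pushforward of Definition~\ref{df:pushforward_sheaf} and the usual one --- but the underlying strategy is identical.
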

 The proof of this statement follows immediately from the Vietoris Mapping Theorem \cite[Thm. 3, Section II.11]{Bredon} or \cite[Thm. 4.2]{TSPBook}.  If the hypotheses of this proposition are satisfied, then convergence and consistency are equivalent properties for the graphical model.

\section{Future prospects: homological analysis of multi-model systems}
 
  \label{sec:homological}

Once a system has been encoded in a diagrammatic way -- as a sheaf -- its analysis is effectively a purely mathematical task.  For sheaves over posets whose stalks are vector spaces and whose restrictions (or extensions, for dual sheaves) are linear maps, \emph{homological} invariants can be computed \cite[Sec. 2.5]{Baclawski_1977}.

If the sheaf model does not have linear restriction maps, then it is necessary to linearize them before homological analysis can proceed.  The question of \emph{where} to linearize is easily addressed, at least theoretically: one should linearize about a section!  

\begin{definition}
  \label{def:linearized}
If $\shf{S}$ is a sheaf of smooth manifolds over a poset $P$ and $s$ is a global section of $\shf{S}$, then one can construct the \emph{linearized sheaf $\widetilde{\shf{S}}_s$ about $s$}.  This is defined by 
\begin{enumerate}
\item the stalk $\widetilde{\shf{S}}_s(x)$ over $x \in P$ is the tangent space $T_{s(x)}\shf{S}(x)$, and
\item the restriction $\widetilde{\shf{S}}_s(x \le y)$ for $x \le y \in P$ is the derivative map of the corresponding restriction in $\shf{S}$, namely
\begin{equation*}
d_{s(x)} \shf{S}(x \le y): T_{s(x)}\shf{S}(x) \to T_{s(y)}\shf{S}(y).
\end{equation*}
\end{enumerate}
\end{definition}

Observe that $\widetilde{\shf{S}}_s$ is a sheaf of vector spaces on $P$, whose restriction maps are linear maps.  The global section $s$ of $\shf{S}$ corresponds to the zero section in $\widetilde{\shf{S}}_s$.  Global sections of $\widetilde{\shf{S}}_s$ correspond to perturbations of $s$ in $\shf{S}$, and therefore describe the neighborhood of $s$ in the space of global sections of $\shf{S}$.

\begin{definition}
\label{df:cohomology}
Suppose that $\shf{S}$ is a sheaf of vector spaces with linear restriction maps on a poset $P$.  The \emph{$k$-cochain space} $C^k(\shf{S})$ of $\shf{S}$ consists of the following direct product of stalks at the end of chains of length $k$: 
\begin{equation*}
C^k(P;\shf{S}) = \prod_{a_0 < \dotsb < a_k} \shf{S}(a_k).
\end{equation*}
Each element in $C^k$ is therefore indexed by a chain in $P$ of length $k$, and can therefore be thought of as a function $s$ from the collection of chains in $P$.  The \emph{$k$-coboundary map} $d^k: C^k(P;\shf{S}) \to C^{k+1}(P;\shf{S})$ is given by the formula
\begin{eqnarray*}
\left(d^k s\right)(a_0 < \dotsb a_{k+1}) &=& \sum_{i=0}^{k} (-1)^i s(a_0 < \dotsb \widehat{a_i} < \dotsb a_{k+1}) \\
&& + (-1)^{k+1} \shf{S}(a_k < a_{k+1})\left(s(a_0 < \dotsb < a_k)\right). 
\end{eqnarray*}
The cochain spaces and the coboundary maps form a chain complex, whose homology
\begin{equation*}
H^k(P;\shf{S}) = H^k ( C^\bullet(P;\shf{S}), d^\bullet)
\end{equation*}
is called the \emph{cohomology of the sheaf $\shf{S}$}.
\end{definition}

\begin{remark}
Although the formula for the coboundary map seems a bit unmotivated, it is rather reasonable.  The usual boundary map in simplicial homology is of the form
\begin{eqnarray*}
\partial [v_0, \dotsc, v_{k+1}] &=& \sum_{i=0}^{k+1} (-1)^i [v_0, \dotsc, \widehat{v_i}, \dotsc, v_{k+1}]\\
&=& \left(\sum_{i=0}^k (-1)^i [v_0, \dotsc, \widehat{v_i}, \dotsc, v_{k+1}]\right) + (-1)^{k+1} [v_0, \dotsc, v_k].\\
\end{eqnarray*}
Transferring this to the setting of chains in $P$ (by considering the space of functions on simplices), the terms in the sum all correspond to chains in $P$ that end on $v_{k+1}$, while the final term corresponds to a chain that ends at $v_{k}$.  So if this is really to be a map $C^k(P;\shf{S}) \to C^{k+1}(P;\shf{S})$, all terms in the sum but the final one end up where they ought to: in $C^{k+1}(P;\shf{S})$.  This is easily corrected by moving the final term along a restriction map, which is precisely what the Definition \ref{df:cohomology} prescribes.
\end{remark}

The most effective homological analysis of sheaf models follows the following work flow:
 \begin{enumerate}
\item Encode the diagrammatic model as a sheaf over a poset as described in this chapter,
\item Linearize, if necessary, 
\item Summarize the sheaf model by computing its cohomology, and
\item Reinterpret the cohomology spaces in terms of dynamical invariants.
 \end{enumerate}

\begin{proposition}
  \label{prop:h0_meaning}
$H^0(\shf{S})$ is isomorphic to the space of global sections of $\shf{S}$.
\end{proposition}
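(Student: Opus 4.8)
\noindent The plan is to unwind the definition of $H^0(\shf{S})$ and check that it is cut out by exactly the equations appearing in Definition \ref{df:section}. First I would identify the relevant cochain groups. A chain ``of length $0$'' in $P$ is a single element, so $C^0(P;\shf{S}) = \prod_{a_0 \in P}\shf{S}(a_0)$, which is precisely the ambient direct product $\prod_{x\in P}\shf{S}(x)$ occurring in Definition \ref{df:section}. There are no chains of length $-1$, so $C^{-1}(P;\shf{S})$ is trivial and the coboundary $d^{-1}$ has trivial image; hence $H^0(P;\shf{S}) = \ker\bigl(d^0:C^0(P;\shf{S})\to C^1(P;\shf{S})\bigr)$, with no quotient to form.

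\noindent Next I would compute $d^0$ from the formula in Definition \ref{df:cohomology}. For a chain $a_0<a_1$ the sum $\sum_{i=0}^0(-1)^i s(a_0<\dotsb\widehat{a_i}\dotsb<a_1)$ contains the single term obtained by deleting $a_0$, which is the value of $s$ on the $0$-chain $a_1$, i.e. $s(a_1)$; the remaining term is $(-1)^{1}\shf{S}(a_0<a_1)\bigl(s(a_0)\bigr)$. Thus
\begin{equation*}
  \bigl(d^0 s\bigr)(a_0<a_1) = s(a_1) - \shf{S}(a_0<a_1)\bigl(s(a_0)\bigr),
\end{equation*}
so that $s\in\ker d^0$ if and only if $\shf{S}(a_0<a_1)\bigl(s(a_0)\bigr)=s(a_1)$ for every strict relation $a_0<a_1$ in $P$.

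\noindent Finally I would compare this with Definition \ref{df:section}: a global section of $\shf{S}$ is an $s\in\prod_{x\in P}\shf{S}(x)$ satisfying $\shf{S}(x\le y)\bigl(s(x)\bigr)=s(y)$ for all $x\le y$. The two systems of equations differ only in that the latter also ranges over reflexive pairs $x=y$; but for $x=y$ the equation reads $\shf{S}(x\le x)\bigl(s(x)\bigr)=s(x)$, which holds automatically since the restriction attached to a reflexive pair is the identity (the functoriality convention standard for a sheaf on a poset, implicit in Definition \ref{df:sheaf_poset}). Hence $\ker d^0$ is exactly the set of global sections of $\shf{S}$. Since both sides are the same subset of the same direct product $\prod_{x\in P}\shf{S}(x)$, the identity map restricts to the claimed isomorphism; when the stalks are vector spaces and the restrictions are linear, this subset is a linear subspace and the identification is linear, so the isomorphism is one of vector spaces. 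I expect no genuine obstacle: the only points requiring care are the bookkeeping of the deleted-index term in $d^0$ and the trivial verification at reflexive pairs.
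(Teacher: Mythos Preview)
Your proposal is correct and follows essentially the same approach as the paper's own proof: identify $C^0$ with the product of all stalks, compute $d^0$ explicitly as $(d^0 s)(a_0<a_1)=s(a_1)-\shf{S}(a_0<a_1)s(a_0)$, and observe that its kernel is exactly the global section condition. Your version is in fact more careful than the paper's (you explicitly note $C^{-1}=0$ so $H^0=\ker d^0$, and you handle the reflexive pairs), but the underlying argument is identical.
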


This means that computing the space of global sections -- solutions to the multi-model systems developed in this chapter -- amounts to computing the kernel of a linear map.

\begin{proof}
In this degenerate setting, the 0-length chains in the poset $P$ are merely all of the elements.  Thus, $C^k(\shf{S})$ is the product of all stalks of $\shf{S}$.  Then the coboundary maps are given by
\begin{equation*}
\left(d^0 s\right)(a_0 < a_1) = s(a_1) - \shf{S}(a_0 < a_1)s(a_0). 
\end{equation*}
for each length 1 chain $a_0 < a_1$.  Notice that the kernel of $d^0$ expresses the fact that the values chosen on each stalk agree with the values propagated along the restriction maps -- precisely the condition that $s$ is a global section.
\end{proof}

In addition to the global sections, the higher degree sheaf cohomology spaces encapsulate other useful information.  For instance, there is a sheaf-theoretic Nyquist theorem \cite{Robinson_SampleBook} that explains the efficacy of discretization methods described in Section \ref{sec:discretization}.  Briefly, if $\shf{S}$ is a sheaf of solutions of some model and $\shf{S} \to \shf{D}$ is a discretization morphism, then $H^0(\shf{S}/\shf{D})$ and $H^1(\shf{S}/\shf{D})$ describe limits on the kinds of inferences that can be drawn about $\shf{S}$ from sections of $\shf{D}$.  Additionally, $H^1(\shf{S})$ for non-discretized sheaf models of differential equations can describe certain dynamical properties of a system \cite{RobinsonQGTopo}.

Since the solution sheaves, as defined in Section \ref{sec:simultaneous}, are written over posets with two levels -- equations and variables -- the maximum path length is 1.  Therefore, the only nontrivial cohomology spaces can be $H^0$ and $H^1$.  The interpretation of $H^0$ is clear in light of Proposition \ref{prop:h0_meaning}.  $H^1$ consists of the values of variables that are not consistent across all models.  Intuitively, $H^1$ measures the ``degrees of freedom of the model that have been constrained out by the equations.''  When we move to the dual sheaf of sheaves in Section \ref{sec:multidiffeqn}, then other, higher-degree cohomology spaces can become nontrivial.

\begin{acknowledgement}
  The author would like to thank the anonymous referees for the thoughtful suggestions that have improved this chapter considerably.  This work was partially supported under the DARPA SIMPLEX program through SPAWAR, Federal contract N66001-15-C-4040. 
\end{acknowledgement}

\bibliographystyle{plain}
\bibliography{multimodel_bib}
\end{document}